\author[Souvik Dey]{Souvik Dey}
\address{Department of Mathematical Sciences
850 West Dickson Street, University of Arkansas
Fayetteville, Arkansas 72701}
\email{souvikd@uark.edu}
\urladdr{\url{https://orcid.org/0000-0001-8265-3301}}
\author[Jian Liu]{Jian Liu}
\address{School of Mathematics and Statistics, and Hubei Key Laboratory of Mathematical Sciences,  Central China Normal University,  Wuhan 430079, P.R. China}
\email{jianliu@ccnu.edu.cn}
\urladdr{\url{https://orcid.org/0000-0001-8360-7024}}
\author[Yuki Mifune]{Yuki Mifune}
\address{Graduate School of Mathematics, Nagoya University, Furocho, Chikusaku, Nagoya 464-8602, Japan}
\email{yuki.mifune.c9@math.nagoya-u.ac.jp}
\urladdr{\url{https://orcid.org/0009-0004-0567-4132}}
\author[Yuya Otake]{Yuya Otake}
\address{Graduate School of Mathematics, Nagoya University, Furocho, Chikusaku, Nagoya 464-8602, Japan}
\email{m21012v@math.nagoya-u.ac.jp}
\urladdr{\url{https://orcid.org/0009-0000-2866-6902}}
\keywords{(strong) generator, module category, singularity category, annihilator of the singularity category, cohomological annihilator, infinite projective/injective dimension locus}
\subjclass[2020]{13D09 (primary); 13C60, 13D05, 13D07, 18G80 (secondary)}
\DeclareMathOperator{\depth}{depth}
\DeclareMathOperator{\h}{H}
\newcommand{\Z}{\mathbb{Z}}
\newcommand{\A}{\mathcal{A}}
\newcommand{\T}{\mathcal{T}}
\newcommand{\D}{\mathsf{D}}
\newcommand{\N}{\mathbb{N}}
\newcommand{\x}{{\bm{x}}}
\newcommand{\m}{\mathfrak{m}}
\newcommand{\p}{\mathfrak{p}}
\DeclareMathOperator{\pd}{pd}
\DeclareMathOperator{\add}{add}
\DeclareMathOperator{\ca}{ca}
\DeclareMathOperator{\coca}{coca}
\DeclareMathOperator{\CA}{\mathfrak{ca}}
\DeclareMathOperator{\gldim}{gl.dim}
\DeclareMathOperator{\res}{res}
\DeclareMathOperator{\id}{id}
\DeclareMathOperator{\Spec}{Spec}
\DeclareMathOperator{\ann}{ann}
\DeclareMathOperator{\Hom}{Hom}
\DeclareMathOperator{\ul}{Ul}
\DeclareMathOperator{\Ext}{Ext}
\DeclareMathOperator{\End}{End}
\DeclareMathOperator{\Max}{Max}
\def\Rfd{\operatorname{Rfd}}
\DeclareMathOperator{\ara}{ara}
\DeclareMathOperator{\thick}{\mathsf{thick}}
\DeclareMathOperator{\Reg}{Reg}
\DeclareMathOperator{\Sing}{Sing}
\DeclareMathOperator{\IPD}{IPD}
\DeclareMathOperator{\IID}{IID}
\DeclareMathOperator{\Supp}{Supp}
\DeclareMathOperator{\mo}{mod}
\DeclareMathOperator{\CM}{CM}
\DeclareMathOperator{\OCM}{\Omega CM}
\DeclareMathOperator{\syz}{\Omega}
\newcommand{\sg}{\mathsf{sg}}
\newtheorem*{theorem*}{Theorem}
\newtheorem{theorem}{Theorem}[section]
\newtheorem*{Question}{Question}
\newtheorem{proposition}[theorem]{Proposition}
\newtheorem{lemma}[theorem]{Lemma}
\newtheorem{corollary}[theorem]{Corollary}
\theoremstyle{definition}
\newtheorem{example}[theorem]{Example}
\newtheorem{remark}[theorem]{Remark}
\newtheorem{chunk}[theorem]{}
\newtheorem*{ack}{Acknowledgments}
\title[Generation of singularity categories and infinite injective dimension locus]{Generation of singularity categories and infinite injective dimension locus via annihilation of cohomologies}
\begin{document}
\maketitle
\begin{abstract}
Let $R$ be a commutative Noetherian ring. We establish a close relationship between the strong generation of the singularity category of $R$ and the nonvanishing of the annihilator of the singularity category of $R$. As an application,  we prove that the singularity category of $R$ has a strong generator if and only if the annihilator of the singularity category of $R$ is nonzero when $R$ is a Noetherian domain with Krull dimension at most one. 
We introduce the notion of the co-cohomological annihilator of modules. If the category of finitely generated $R$-modules has a strong generator, we show that the infinite injective dimension locus of a finitely generated $R$-module $M$ is closed, with the defining ideal given by the co-cohomological annihilator of $M$. Finally, we provide a connection between the existence of an extension generator of the category of finitely generated $R$-modules and the finiteness of the Krull dimension of $R$. 
\end{abstract}
\section{Introduction}

Let $R$ be a commutative Noetherian ring. In \cite{IT2016}, 
Iyengar and Takahashi introduced the notion of the cohomological annihilator of a ring $R$, denoted $\CA(R)$; cf. \ref{def of ca(R)}. They observed a close relationship between the condition $\CA(R)\neq 0$ and the strong generation of the category $\mo(R)$ of finitely generated $R$-modules; cf. \ref{def of gen for Abel}. 
It is established in \cite{IT2016}  that $\mo(R)$ has a strong generator when $R$ is a localization of a finitely generated algebra over a field or an equicharacteristic excellent local ring. 

For a commutative Noetherian domain $R$, Elagin and Lunts \cite{Elagin-Lunts:2018} observed that $\CA(R)\neq 0$ if the bounded derived category of $R$, denoted $\D^f(R)$, has a strong generator; cf. \ref{def of gen for tri}. Recently, the first author, Lank, and Takahashi \cite{DLT} established that the condition $\CA(R/\p)\neq 0$ for each prime ideal $\p$ of $R$ is equivalent to that $\mo(R/\p)$ has a strong generator for each prime ideal $\p$ of $R$, which in turn is equivalent to that  $\D^f(R/\p)$ has a strong generator for each prime ideal $\p$ of $R$. If these conditions hold, they show that $\mo(R)$ has a strong generator. As an application, it is proved in \cite{DLT} that $\mo(R)$ has a strong generator for any quasi-excellent ring $R$ with finite Krull dimension.

The \emph{singularity category} of $R$, denoted $\D_{\sg}(R)$, was introduced by Buchweitz \cite{Buchweitz} and Orlov \cite{Orlov} as the Verdier quotient of the bounded derived category by the full subcategory of perfect complexes. We are motivated by the natural question: 
\begin{Question}
    How can the (strong) generation of the singularity category be characterized?
\end{Question}

 We investigate the above question in Section \ref{Section singularity cat}. It turns out that there is a close connection between the strong generation of the singularity category of $R$ and the nonvanishing of the annihilator of the singularity category of $R$, as well as the nonvanishing of the cohomological annihilator of modules.
 
The \emph{annihilator of the singularity category} of $R$, denoted $\ann_R\D_{\sg}(R)$, is an ideal of $R$ consisting of elements in $R$ that annihilate all homomorphisms of complexes in $\D_{\sg}(R)$; cf. \ref{def of ann of sin cat}. The ideal $\ann_R\D_{\sg}(R)$ measures the singularity of $R$ in the sense that $R$ is regular if and only if $\ann_R\D_{\sg}(R)=R$. The annihilator of the singularity category has recently attracted increasing interest and has been studied in \cite{Esentepe,  Liu, Mifune}.  
For a commutative Noetherian domain $R$, if the singularity category of $R$ has a strong generator, then the annihilator $\ann_R\D_{\sg}(R)$ is nonzero; see Corollary \ref{annihilator of sin}. It is natural to ask whether the converse holds in general. We prove that if, in addition, the Krull dimension of $R$ is less than or equal to one, the converse holds. This is an immediate consequence of \Cref{T1} (2).
\begin{theorem}\label{T1}(See \ref{main},  \ref{main-second}, and \ref{equi-resolving})
    Let $R$ be a commutative Noetherian ring. Then:
\begin{enumerate}
    \item 
    The following two conditions are equivalent.
    \begin{enumerate}
        \item $\D_{\sg}(R/\p)$ has a generator for each prime ideal $\p$ of $R$.

        \item $\displaystyle\bigcap_{M\in\mo(R/\p)}\sqrt{\ca_{R/\p}(M)}\neq 0$ for each prime ideal $\p$ of $R$.
    \end{enumerate} 
Moreover, if $R$ has finite Krull dimension, the above are equivalent to $\mo(R/\p)$ having an extension generator (cf. \ref{def of ext gen}) for each prime ideal $\p$ of $R$.

    \item  Assume $R$ is non-regular with isolated singularities. Then the following are equivalent.

\begin{enumerate}
\item
$\D_{\sg}(R)$ has a strong generator.

\item $R/\ann_{R}\D_{\sg}(R)$ is Artinian.

\item $R/(\displaystyle\bigcap_{M\in\mo(R)}\ca_R(M))$ is  Artinian. 

\item
$\mo(R)$ has a point-wise strong generator; {\rm cf.} \ref{up to syzygy}.

\end{enumerate}
\end{enumerate}
 
\end{theorem}

Theorem \ref{T1} is the main result in Section \ref{Section singularity cat} regarding the characterization of the (strong) generation of singularity categories. The key input is the notion of the \emph{cohomological annihilator} of a finitely generated $R$-module $M$, denoted $\ca_R(M)$, in Section \ref{Section ca of modules}.
The key ingredient in proof of Theorem \ref{T1} is the observation that $\ann_R\D_{\sg}(R)$ coincides with $\displaystyle\bigcap_{M\in\mo(R)}\ca_R(M)$; see Corollary \ref{annihilator of sin}. 
When $R$ is, in addition, a Gorenstein local ring,  Theorem \ref{T1} (2) was proved by Bahlekeh, Hakimian, Salarian, and Takahashi \cite{BHST} in terms of the cohomological annihilator $\CA(R)$; see Remark \ref{compare with BHST}. 

For a finitely generated $R$-module $M$, it is known that the finite projective dimension locus of $M$, 
$
\{\p\in\Spec(R)\mid \pd_{R_\p}(M_\p)<\infty\},
$
is an open subset in the Zariski spectrum $\Spec(R)$; see \cite{BM}. However, the same is not true for finite injective dimension locus, even when the module is the ring itself, i.e., even the Gorenstein locus can fail to be open; see \cite{Nishimura}. 
For an excellent ring $R$, Greco and Marinari \cite{Greco-Marinari} observed that the Gorenstein locus is open. Takahashi \cite{Takahashi:2006_Glasgow} extended this result by proving that the finite injective dimension locus of a finitely generated module over an excellent ring is open. 

In Section \ref{Section coca of modules}, we investigate the \emph{infinite injective dimension locus} $\IID(M)$ of an $R$-module $M$, which is the complement of the finite injective dimension locus. Namely, $$\IID(M)=\{\p\in \Spec(R)\mid \id_{R_\p}(M_\p)=\infty\}.$$ 
We observe in  \Cref{T2} that the infinite injective dimension locus of a finitely generated module is always closed if $\mo(R)$ has an \emph{extension generator} (cf. \ref{def of ext gen}).  
 For a quasi-excellent ring $R$ with finite Krull dimension and a finitely generated $R$-module $M$,  it follows from \cite{DLT} that $\mo(R)$ has a strong generator. Consequently, by Theorem \ref{T2}, $\IID(M)$ is closed. In Section \ref{Section coca of modules}, we introduce the notion of the \emph{co-cohomological annihilator} of an $R$-module $M$, denoted $\coca_R(M)$. The following result shows that, under some mild assumptions, this ideal is a defining ideal of the infinite injective dimension locus.

\begin{theorem}\label{T2} (See \ref{singid} and \ref{equality})
      Let $R$ be a commutative Noetherian ring and $M$ be a finitely generated $R$-module. Then:

\begin{enumerate}
    \item  If $\mo(R)$ has an extension generator, then $\IID(M)$ is closed in $\Spec(R)$. Moreover, $\Sing(R)=\IID(G)$ holds for each extension generator $G$ of $\mo(R)$.

    \item If $\mo(R)$ has a strong generator, then $$\IID(M)=V(\coca_R(M)).$$
\end{enumerate}

\end{theorem}


 We also compare the cohomological annihilator with the co-cohomological annihilator of a finitely generated module $M$. It turns out that these annihilators coincide when $R$ is Gorenstein with finite Krull dimension and $M$ is maximal Cohen--Macaulay; see Lemma \ref{example of co ca}. 



Our third result, Theorem \ref{new theorem}, establishes a connection between the existence of an extension generator of the module category and the finiteness of the Krull dimension. Moreover, Theorem \ref{new theorem} (2) extends a recent result of Araya, Iima, and Takahashi \cite{AIT} on the generation of syzygy modules out of a single module by only taking direct summands and extensions; see Remark \ref{AIT}.

\begin{theorem}\label{new theorem} (See \ref{finite-extension gen} and \ref{ext-dimension})
    Let $R$ be a commutative Noetherian ring. Then:

\begin{enumerate}
    \item If $\mo(R)$ has an extension generator, then the Krull dimension of $R$ is finite.

\item If the Krull dimension of $R$ is finite and the singular locus of $R$ is a finite set, then $\mo(R)$ has an extension generator. 




\end{enumerate}
\end{theorem}


\begin{ack}
We would like to thank Kaito Kimura, Jan Šťovíček, and Ryo Takahashi for their helpful discussions and valuable comments related to this work. The first author was partially supported by the Charles University Research Center program No.UNCE/SCI/022 and a grant GA \v{C}R 23-05148S from the Czech Science Foundation. The second author was supported by the National Natural Science Foundation of China (No. 12401046) and the Fundamental Research Funds for the Central Universities (Nos. CCNU25JC025, CCNU25JCPT031). The fourth author was partly supported by Grant-in-Aid for JSPS Fellows 23KJ1119.
\end{ack}

\section{Notation and Terminology}
Throughout this article, $R$ will be a commutative Noetherian ring. For each $R$-module $M$, let $\pd_R(M)$ denote the projective dimension of $M$ over $R$, and $\id_R(M)$ denote the injective dimension of $M$ over $R$.  We write $\mo(R)$ to be the category of finitely generated $R$-modules. Let $\dim(R)$ denote the Krull dimension of $R$, and $\gldim(R)$ denote the global homological dimension of $R$. 

\begin{chunk}\label{def of Gorenstein}
    \textbf{(Strongly) Gorenstein rings.} A commutative Noetherian ring $R$ is called \emph{strongly Gorenstein} provided that $\id_R(R)<\infty$. For instance, regular local rings and complete intersection rings are both examples of strongly Gorenstein rings.

 A commutative Noetherian ring $R$ is said to be \emph{Gorenstein} provided that $R_\p$ is strongly Gorenstein (i.e., $\id_{R_\p}(R_\p)<\infty$) for each prime ideal $\p$ of $R$. Note that a commutative Noetherian local ring is Gorenstein if it is strongly Gorenstein.
 
 For a Gorenstein ring $R$, $\id_R(R)=\dim(R)$; this can be proved by combining \cite[Theorem 3.1.17]{BH} and \cite[Corollary 2.3]{Bass1963}.
In particular, a commutative Noetherian ring $R$ is strongly Gorenstein if and only if it is Gorenstein with finite Krull dimension. 
\end{chunk}

\begin{chunk}\label{def regular}
\textbf{Regular rings.}
    A commutative Noetherian local ring is called \emph{regular} if its maximal ideal can be generated by a system of parameter. Auslander, Buchsbaum, and Serre observed that a commutative Noetherian local ring is regular if and only if its global homological dimension is finite; see \cite[Theorem 2.2.7]{BH}. 

    A commutative Noetherian ring $R$ is called regular if $R_\p$ is regular for each prime ideal $\p$ of $R$. Furthermore, a commutative Noetherian ring is regular if and only if every finitely generated module has finite projective dimension; see \cite[Lemma 4.5]{BM} for the forward direction, and the backward direction follows from the criterion of Auslander, Buchsbaum, and Serre.

    For a regular ring, $\gldim(R)=\dim(R)$; see \cite[Theorem 5.94]{Lam}. In particular, a commutative Noetherian ring is regular with finite Krull dimension if and only if $\gldim(R)$ is finite.
\end{chunk}
\begin{chunk}\label{def ann and supp}
\rm  \textbf{Annihilator and support of modules.} 
For each $R$-module $M$, let $\ann_R(M)$ denote the \emph{annihilator} of $M$ over $R$. That is, $\ann_R(M)\colonequals\{r\in R\mid r\cdot M=0\}$. 

The set of all prime ideals of $R$ is denoted by $\Spec(R)$. It is endowed with the Zariski topology; the closed subset in this topology is of the form $V(I)\colonequals\{\p\in \Spec(R)\mid \p\supseteq I\}$ for each ideal $I$ of $R$. 

For each $R$-module $M$, the \emph{support} of $M$ is 
$$
\Supp_RM\colonequals\{\p\in \Spec(R)\mid M_\p\neq 0\},
$$
where $M_\p$ is the localization of $M$ at $\p$. Note that $\Supp_R(M)\subseteq V(\ann_R(M))$; the equality holds if, in addition, $M$ is finitely generated. 
\end{chunk}

\begin{chunk}\label{def of syzygy}
 \textbf{Syzygy modules.}   For a finitely generated $R$-module $M$ and $n\geq 1$, we let $\Omega^n_R(M)$ denote the $n$-th syzygy of $M$. That is, there is a long exact sequence
$$
0\rightarrow \Omega^n_R(M)\rightarrow P^{-(n-1)}\rightarrow \cdots P^{-1}\rightarrow P^0\rightarrow M\rightarrow 0,
$$
where $P^{-i}$ are finitely generated projective $R$-modules for $0\leq i\leq n-1$. By Schanuel's Lemma, $\Omega^n_R(M)$ is independent of the choice of the projective resolution of $M$ up to projective summands. By convention, $\Omega^0_R(M)=M$ for each finitely generated $R$-module $M$. Let $\mathcal C$ be a full subcategory of $\mo(R)$, we denote
$$
\Omega^n_R(\mathcal C)\colonequals\{\Omega^n_R(M)\mid M\in \mathcal{C}\}.
$$

We say a finitely generated $R$-module $M$ is an \emph{infinite syzygy} if there exists an exact sequence
$
0\rightarrow M\rightarrow Q^0\rightarrow Q^1\rightarrow Q^2\rightarrow \cdots,
$
where $Q^i$ are finitely generated projective $R$-modules for $i\geq 0$. 
\end{chunk}
\begin{chunk}\label{def of MCM}
    \textbf{Maximal Cohen--Macaulay modules.} For a finitely generated $R$-module $M$, it is said to be \emph{maximal Cohen--Macaulay} provided that $\depth(M_\p)\geq \dim(R_\p)$ for each prime ideal $\p$ of $R$, where $\depth(M_\p)$ represents the depth of $M_\p$ over $R_\p$; see details in \cite{BH}. Let $\CM(R)$ denote the full subcategory of $\mo(R)$ consisting of maximal Cohen--Macaulay $R$-modules. A commutative Noetherian ring is \emph{Cohen--Macaulay} if $R$ is in $\CM(R)$. 

    Assume $R$ is a Gorenstein ring. Note that a finitely generated $R$-module $M$ is maximal Cohen--Macaulay if and only if $\Ext^{i}_R(M,R)=0$ for all $i>0$; this can be proved by using Ischebeck's formula (see \cite[Exercise 3.1.24]{BH}). Then it follows from \cite[Theorem B.1.6]{Buchweitz} that, for each $M\in \mo(R)$, $\Omega^s_R(M)$ is maximal Cohen--Macaulay for some $s\geq 0$. On the other hand, any maximal Cohen--Macaulay $R$-module is an infinite syzygy; see \cite[Theorem B.1.3]{Buchweitz}. 
\end{chunk}
\begin{chunk}
      \textbf{Thick subcategories of Abelian categories.}
     Let $\A$ be an Abelian category.  A full subcategory $\mathcal C$ of $\A$ is called \emph{thick} if $\mathcal C$ is closed under direct summands, and it contains objects that fit into a short exact sequence 
     such that the other two objects are in $\mathcal C$. 

     For each object $X$ in $\A$. Let $\thick_\A(X)$ denote the smallest thick subcategory of $\A$ containing $X$; the notation $\thick_\A$ will have no confusion with $\thick_\T$ in \ref{def of thick over tri} for a triangulated category $\T$. This can be constructed inductively; it is an analogous construction of a thick subcategory in a triangulated category (see \ref{def of thick over tri}).

     Set $\thick_\A^0(X)\colonequals \{0\}$. Let $\thick_\A^1(X)$ be the full subcategory of $\A$ consisting of objects which are direct summands of any finite copies of $X$. For $n\geq 2$, let $\thick_\A^n(X)$ denote the full subcategory of $\A$ consisting of direct summands of any object in $\A$ that fits into a short exact sequence 
     $$
0\rightarrow Y_1\rightarrow Y_2\rightarrow Y_3\rightarrow 0,
$$
where the other two objects satisfying: one is in $\thick^{n-1}_\A(X)$, and the other one is in $\thick^{1}_\A(X)$. Note that $\thick_\A(X)=\displaystyle\bigcup_{n\geq 0}\thick_\A^n(X).$
     \end{chunk}
\begin{chunk}\label{def of thick over tri}
    \textbf{Thick subcategories of triangulated categories.} Let $\T$ be a triangulated category with a suspension functor $[1]$. A full category $\mathcal C$ of $\T$ is called \emph{thick} if it is closed under suspensions, cones, and direct summands. 

    For each object $X$ in $\T$, let $\thick_\T(X)$ denote the smallest thick subcategory of $\T$ containing $X$. $\thick_\T(X)$ can be constructed inductively as below; see \cite[Section 2]{ABIM} for more details.

    First, $\thick_{\T}^0(X)\colonequals \{0\}$. Let $\thick_\T^1(X)$ be the smallest full subcategory of $\T$ containing $X$, and it is closed under suspensions,  finite direct sums, and direct summands. Inductively,  $\thick_\T^n(X)$ is denoted to be the full subcategory of $\T$ consisting of objects $Y$ in $\T$ that appear in an exact triangle
$$
Y_1\rightarrow Y\oplus Y^\prime\rightarrow Y_2\rightarrow Y_1[1],
$$
where $Y_1\in \thick^{n-1}_{\T}(X)$ and $Y_2\in \thick_{\T}^{1}(X)$.
Note that $\thick_\T(X)=\displaystyle\bigcup\limits_{n\geq 0}\thick_\T^n(X).$
\end{chunk}

\begin{chunk}\label{def of sin cat}
\rm \textbf{Derived categories and singularity categories.}
Let $\D(R)$ denote the derived category of $R$-modules. This is a triangulated category with a suspension functor $[1]$; for each complex $X$, $(X[1])^i=X^{i+1}$ and $\partial_{X[1]}=-\partial_X$.
The \emph{bounded derived category}, denoted $\D^f(R)$, is the full subcategory of $\D(R)$ consisting of  complexes $X$ such that its total cohomology, denoted $\displaystyle\bigoplus_{i\in \Z}\h^i(X)$, is finitely generated over $R$; note that $\D^f(R)$ is a thick subcategory of $\D(R)$.

For each complex  $X$ in $\D(R)$, it is called \emph{perfect} if $X\in \thick_{\D(R)}(R)$. It turns out that a complex in $\D(R)$ is perfect if and only if it is isomorphic to a bounded complex of finitely generated projective $R$-modules; this can be proved by using \cite[Lemma 1.2.1]{Buchweitz}. Note that $\thick_{\D(R)}(R)\subseteq \D^f(R)$.

The \emph{singularity category} of $R$ is defined to be the Verdier quotient  $$\D_{\sg}(R)\colonequals\D^f(R)/\thick_{\D(R)}(R).$$
This category was introduced by Buchweitz \cite[Definition 1.2.2]{Buchweitz} under the name ``stable derived category" and later also found by Orlov \cite[1.2]{Orlov}. This category detects the singularity of $R$ in the sense that $\D_{\sg}(R)$ is trivial if and only if $R$ is regular.
\end{chunk}
   The following definition of ``strong generator" in \ref{def of gen for Abel} differs from that of Iyengar and Takahashi \cite[4.3]{IT2016}. However, for a commutative Noetherian ring $R$, $\mo(R)$ has a strong generator in the sense of \ref{def of gen for Abel} if and only if $\mo(R)$ has a strong generator in the sense of Iyengar and Takahashi; see \cite[Corollary 4.6]{IT2016}.
\begin{chunk}\label{def of gen for Abel}
    \textbf{(Strong) generators of Abelian categories.}
    Let $\A$ be an Abelian category and $G$ be an object in $\A$. The object $G$ is called a \emph{generator} of $\A$ if $\thick_\A(G)=\A$, and $G$ is called a \emph{strong generator} of $\A$ if $\thick^n_\A(G)=\A$ for some $n\geq 0$. 
    
For example, if $R$ is an  Artinian ring $R$, $R/J(R)$ is a strong generator of $\mo(R)$, where $J(R)$ is the Jacobson radical of $R$. Moreover, $\mo(R)=\thick^{\ell\ell(R)}_{\mo(R)}(R/J(R))$, where $\ell\ell(R)\colonequals {\rm inf}\{n\geq 0\mid J(R)^n=0\}$ is the Loewy length of $R$.
\end{chunk}
\begin{chunk}\label{def of gen for tri}
    \textbf{(Strong) generators of triangulated categories.}
    Let $\T$ be a triangulated category and $G$ be an object in $\T$. The object $G$ is called a \emph{generator} of $\T$ if $\thick_\T(G)=\T$, and $G$ is called a \emph{strong generator} of $\T$ if $\thick^n_\T(G)=\T$ for some $n\geq 0$; see details in \cite{Bondal/vdB:2003} and \cite{Rouquier}. Note that $\T$ has a strong generator if and only if the Rouquier dimension (see \cite{Rouquier}) of $\T$ is finite.

If $R$ is an Artinian ring, then $R/J(R)$ is a strong generator of $\D^f(R)$; indeed, $\D^f(R)=\thick^{\ell\ell(R)}_{\D(R)}(R/J(R))$ by \cite[Proposition 7.37]{Rouquier}.
\end{chunk}
We end this section by recording the (strong) generators of the regular rings.
\begin{chunk}\label{gen regular}
Let $R$ be a commutative Noetherian ring. Note that, for each $G\in \mo(R)$, $G$ is a generator of $\D^f(R)$ if $G$ is a generator of $\mo(R)$. Combining with this, the following are equivalent.
  \begin{enumerate}
    \item $R$ is regular.

    \item $R$ is a generator of $\mo(R)$.

    \item $R$ is a generator of $\D^f(R)$. 
\end{enumerate}
\end{chunk}
  
\begin{chunk}\label{strong gen regular finite dim}
Let $R$ be a commutative Noetherian ring and $G$ be a module in $\mo(R)$.  
   If $\mo(R)=\thick_{\mo(R)}^n(G)$, then $\D^f(R)=\thick_{\D(R)}^{2n}(G)$ by \cite[Lemma 7.1]{IT2016}. In particular, $G$ is a strong generator of $\D^f(R)$ if $G$ is a strong generator of $\mo(R)$. This yields $(2)\Rightarrow (3)$ below. Indeed, all the following conditions are equivalent.
  \begin{enumerate}
    \item $R$ is regular with $\dim(R)<\infty$, equivalently $\gldim(R)<\infty$ (see \ref{def regular}).

    \item $R$ is a strong generator of $\mo(R)$.

    \item $R$ is a strong generator of $\D^f(R)$. 
\end{enumerate}
$(1)\Rightarrow (2)$: if $\gldim(R)=d$ is finite, then $\mo(R)=\thick_{\mo(R)}^{d+1}(R)$. $(3)\Rightarrow (1)$: if $\D^f(R)=\thick^n_{\D(R)}(R)$, then \cite[Proposition 4.5 and 4.6]{Christensen} yields that $\pd_R(M)\leq n-1$ for each $M\in \mo(R)$, and hence $\gldim(R)\leq n-1$. 

Moreover, if the above conditions hold, then $\mo(R)=\thick_{\mo(R)}^{d+1}(R)$ and $\D^f(R)=\thick_{\D(R)}^{d+1}(R)$ (see \cite[Theorem 8.3]{Christensen} ), where $d=\dim(R)$. 
\end{chunk}

\section{Cohomological annihilators for modules}\label{Section ca of modules}
The following definition of cohomological annihilators for modules is inspired by the definition of cohomological annihilators for rings introduced by Iyengar and Takahashi \cite{IT2016}; see their definition in \ref{def of ca(R)}. For a subset $I\subseteq \N$ and full subcategories $\mathcal{C},\mathcal{D}$ of $\mo(R)$, set $\Ext^I_R(\mathcal{C},\mathcal{D})\colonequals \bigoplus\limits_{i\in I}\bigoplus\limits_{M\in\mathcal{C},N\in\mathcal{D}}\Ext^i_R(M,N)$. 
\begin{chunk}\label{def ca of modules}
    For each finitely generated $R$-module $M$ and $n\geq 0$, we define the $n$-th \emph{cohomological annihilator} of $M$ to be
    $$
    \ca_R^n(M)\colonequals \ann_R\Ext_R^{\geq n}(M,\mo(R)).
    $$ 
That is, $\ca^n_R(M)$ consists of elements $r\in R$ such that $r\cdot \Ext^i_R(M,N)=0$ for each $i\geq n$ and $N\in \mo(R)$. Consider the ascending chain of ideals
$$
\ann_R(M)=\ca_R^0(M)\subseteq \ca_R^1(M)\subseteq\ca_R^2(M)\subseteq\cdots,
$$
 the cohomological annihilator of $M$ is defined to be the union of these ideals
$$
\ca_R(M)\colonequals \bigcup_{n\geq 0}\ca_R^n(M).
$$
Since $R$ is Noetherian,  $\ca_R(M)=\ca_R^n(M)$ for $n\gg 0$.

Note that $\ca_R^n(M)=R$ if and only if $\pd_R(M)<n$. Thus, $\ca_R(M)=R$ if and only if $\pd_R(M)<\infty$.
\end{chunk}

\begin{chunk}
    Let $\mathcal C$ be a full subcategory of $\mo(R)$, Bahlekeh, Salarian, Takahashi, and Toosi \cite[Definition 5.7]{BSTT2022} introduced the concept of \emph{cohomological annihilator} of $\mathcal C$, denoted $\ca_R(\mathcal C)$.  It is defined as
    $$
\ca_R(\mathcal C)\colonequals \bigcap_{X\in \mathcal C}\ca_R(X). 
$$

By definition, if $\mathcal C$ consists of only one module $X$, then $\ca_R(\mathcal C)=\ca_R(X)$.
\end{chunk}

\begin{chunk}\label{def of ca(R)}
For each $n\geq 0$, following \cite[Definition 2.1]{IT2016}, the $n$-th \emph{cohomological annihilator} of the ring $R$ is defined to be
$$
\CA^n(R)\colonequals \ann_R\Ext_R^{\geq n}(\mo(R),\mo(R)).
$$
By definition, $\CA^n(R)=\displaystyle\bigcap\limits_{M\in \mo(R)}\ca_R^n(M)$ and there is an ascending chain of ideals
$$
0=\CA^0(R)\subseteq \CA^1(R)\subseteq \CA^2(R)\subseteq\cdots.
$$
The cohomological annihilator of the ring $R$ is defined to be the ideal
$$\CA(R)\colonequals \bigcup_{n\geq 0}\CA^n(R).$$
Also, $\CA(R)=\CA^n(R)$ for $n\gg 0$ as $R$ is Noetherian. 
\end{chunk}

\begin{example}
    Let $R$ be an equicharacteristic complete Cohen--Macaulay local ring with Krull dimension $d$. Wang \cite[Theorem 5.3]{Wang1994} observed that $\CA^{d+1}(R)$ contains the Jacobian ideal of $R$; see also \cite[Theorem 1.2]{IT2021}.
\end{example}

\begin{lemma}\label{test regular}
(1) $\CA(R)\subseteq \ca_R(\mo(R))$.

(2) $\ca_R(\mo(R))=R$ if and only if $R$ is regular.

(3) $\CA(R)=R$ if and only if $R$ is regular with finite Krull dimension.
\end{lemma}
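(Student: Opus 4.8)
The plan is to derive all three parts from the definitions recorded in \ref{def ca of modules}, \ref{def of ca(R)}, and the characterization of regularity in \ref{def regular}. For part (1), recall that $\CA^n(R)=\bigcap_{M\in\mo(R)}\ca_R^n(M)$ by \ref{def of ca(R)}, while $\ca_R(\mo(R))=\bigcap_{X\in\mo(R)}\ca_R(X)$. Since $\ca_R^n(M)\subseteq\ca_R(M)$ for every $n$ and every $M$, taking intersections over all $M$ gives $\CA^n(R)\subseteq\ca_R(\mo(R))$ for each $n$, and then taking the union over $n$ yields $\CA(R)=\bigcup_n\CA^n(R)\subseteq\ca_R(\mo(R))$. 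This part is essentially bookkeeping with the two nested unions/intersections.

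For part (2), I would argue both directions. If $R$ is regular, then by \ref{def regular} every finitely generated $R$-module $M$ has finite projective dimension, so by the last sentence of \ref{def ca of modules} we have $\ca_R(M)=R$ for all $M\in\mo(R)$; hence $\ca_R(\mo(R))=\bigcap_{X}\ca_R(X)=R$. Conversely, suppose $\ca_R(\mo(R))=R$. Then $1\in\ca_R(M)$ for every $M\in\mo(R)$, which (again by \ref{def ca of modules}) forces $\pd_R(M)<\infty$ for every finitely generated $M$; by the criterion of Auslander--Buchsbaum--Serre quoted in \ref{def regular}, $R$ is regular.

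For part (3), I would combine part (2) with part (1) and a dimension-count. If $R$ is regular with $\dim(R)<\infty$, then $\gldim(R)=\dim(R)=d<\infty$ by \ref{def regular}, so $\Ext^i_R(M,N)=0$ for all $i>d$ and all $M,N\in\mo(R)$; thus $1\in\CA^{d+1}(R)$, giving $\CA(R)=R$. Conversely, if $\CA(R)=R$, then by part (1) $\ca_R(\mo(R))=R$, so $R$ is regular by part (2); moreover $\CA(R)=\CA^n(R)$ for some $n$ (as $R$ is Noetherian), and $\CA^n(R)=R$ means $\Ext^i_R(M,N)=0$ for all $i\geq n$ and all finitely generated $M,N$, so in particular $\pd_R(M)\le n-1$ for every $M$ (taking $N$ to range over all modules, or using that $\Ext^{\ge n}_R(M,-)=0$ on $\mo(R)$ implies $\pd_R M<n$), whence $\gldim(R)\le n-1<\infty$; by \ref{def regular} this means $R$ is regular with finite Krull dimension.

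The only mild subtlety — the ``main obstacle,'' such as it is — is the converse in part (3): one must be careful that $\CA^n(R)=R$ genuinely bounds the global dimension, i.e. that the vanishing $\Ext^{\ge n}_R(M,\mo(R))=0$ for a single fixed $n$ and all $M$ really implies $\pd_R(M)<n$ for all $M$ and hence $\gldim(R)<\infty$. This follows since $\pd_R(M)=\sup\{i\mid \Ext^i_R(M,N)\ne 0\text{ for some }N\in\mo(R)\}$ over a Noetherian ring, so the uniform vanishing above degree $n-1$ is exactly the statement $\gldim(R)\le n-1$. Everything else is a direct unwinding of definitions together with the cited Auslander--Buchsbaum--Serre criterion.
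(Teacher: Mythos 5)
Your proof is correct and follows essentially the same route as the paper in parts (1) and (2), which are just unwinding the definitions. For part (3), the paper simply cites \cite[Example 2.5]{IT2016} for the equivalence $\CA(R)=R\iff\gldim(R)<\infty$ and then invokes \ref{def regular}, whereas you unwind that equivalence directly: the forward direction via $\gldim(R)=\dim(R)$ for a regular ring of finite Krull dimension (recorded in \ref{def regular}), and the converse via the observation that $\CA^n(R)=R$ forces $\pd_R(M)<n$ for every $M\in\mo(R)$ — which is exactly the fact $\ca_R^n(M)=R\iff\pd_R(M)<n$ noted at the end of \ref{def ca of modules} — so $\gldim(R)\le n-1$. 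Both are sound; yours is more self-contained, the paper's is shorter. One tiny redundancy in your converse: once you know $\gldim(R)<\infty$, regularity is automatic, so the detour through parts (1) and (2) to conclude $R$ is regular is not actually needed.
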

\begin{proof}
  (1) For each $n\geq 0$, we have
    $$
   \CA^n(R)=\bigcap_{M\in \mo(R)}\ca_R^n(M)\subseteq \bigcap_{M\in \mo(R)}\ca_R(M)=\ca_R(\mo(R)).
    $$
 Thus, $\CA(R)\subseteq \ca_R(\mo(R))$ as $\CA(R)=\CA^n(R)$ for $n\gg 0$.

    (2) By definition, $\ca_R(\mo(R))=R$ if and only if $\ca_R(M)=R$ for each $M\in \mo(R)$. This is equivalent to $\pd_R(M)<\infty$ for each $M\in \mo(R)$; see \ref{def ca of modules}. This is equivalent to that $R$ is regular; see \ref{def regular}. 

    (3) By \cite[Example 2.5]{IT2016}, $\CA(R)=R$ if and only if $\gldim(R)<\infty$.  The desired result now follows immediately from \ref{def regular}.
\end{proof}
In general, $\CA(R)\neq \ca_R(\mo(R))$; see the example below. They are equal if, in addition, $R$ is strongly Gorenstein; see Proposition \ref{relation CA(R) and ca(R-mo)}.
\begin{example}\label{ca proper}
    The inclusion in Lemma \ref{test regular} (1) can be proper. In \cite[Appendix, Example 1]{Nagata}, Nagata constructed a commutative Noetherian regular ring $R$ with infinite Krull dimension. In this case, $\CA(R) \subsetneqq \ca_R(\mo(R))=R$ by Lemma \ref{test regular}.
\end{example}

\begin{chunk}\label{localization}
    Let $M,N$ be $R$-modules. If $M$ is finitely generated, then it follows from \cite[Proposition 3.3.10]{Weibel} that there is a natural isomorphism
    $$
\Ext^n_R(M,N)_\p\cong \Ext^n_{R_\p}(M_\p,N_\p)
$$
for each $\p\in \Spec(R)$ and $n\geq 0$. 
\end{chunk}
\begin{lemma}\label{basic}
      Let $M$ be a finitely generated $R$-module and $n\geq 1$. Then: 
\begin{enumerate}
    \item $\ca_R^n(M)=\ann_R\Ext^n_R(M,\Omega^n_R(M))=\ann_R\underline{\End}_R(\Omega^{n-1}_R(M))$. 

    \item  $\{\p\in \Spec(R)\mid \pd_{R_\p}(M_\p)\geq n\}=V(\ca_R^n(M))=\Supp_R\Ext_R^n(M,\Omega^n_R(M))
$.

    \item $\ca_R^n(M)=\ann_R\Ext^n_R(M,\mo(R))$.

    \item $\ca_R^{n+i}(M)=\ca_R^n(\Omega^i_R(M))$ for each $i\geq 1$. Hence, $\ca_R(M)=\ca_R(\Omega^i_R(M))$ for each $i\geq 1$.

    \item Let $S$ be a multiplicatively closed subset of $R$. Then $S^{-1}\ca^n_R(M)=\ca^n_{S^{-1}R}(S^{-1}M)$. In particular, 
    $S^{-1}\ca_R(M)=\ca_{S^{-1}R}(S^{-1}M)$. 
\end{enumerate}
\end{lemma}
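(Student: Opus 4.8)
The plan is to reduce (2)--(5) to (1), and to prove (1) by means of the \emph{universal extension classes}. Fix a projective resolution $\cdots\to P^{-i}\to\cdots\to P^{-1}\to P^0\to M\to 0$ and, for $i\ge 0$, let $\omega_i\in\Ext^i_R(M,\Omega^i_R(M))$ be the class of the surjection $P^{-i}\twoheadrightarrow\Omega^i_R(M)=\Coker(P^{-(i+1)}\to P^{-i})$, and write $\omega\colonequals\omega_n$. For the first equality in (1) I would prove the chain of containments
$\ca_R^n(M)\subseteq\ann_R\Ext^n_R(M,\Omega^n_R(M))\subseteq\ann_R(R\omega)\subseteq\ca_R^n(M)$, the first two being immediate from the definition of $\ca_R^n(M)$ and from $\omega\in\Ext^n_R(M,\Omega^n_R(M))$. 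For the third, fix $r$ with $r\omega=0$. Two standard facts do the job: (i) for every $N\in\mo(R)$ and every $i\ge 0$, post-composition with $\omega_i$ gives a surjection $\Hom_R(\Omega^i_R(M),N)\twoheadrightarrow\Ext^i_R(M,N)$, because a degree-$i$ cocycle $P^{-i}\to N$ factors through $\Omega^i_R(M)=\Coker(P^{-(i+1)}\to P^{-i})$; and (ii) splicing extensions gives $\omega_i=\alpha\cdot\omega$ in $\Ext^i_R(M,\Omega^i_R(M))$ for a suitable $\alpha\in\Ext^{\,i-n}_R(\Omega^n_R(M),\Omega^i_R(M))$ whenever $i\ge n$. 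Then bilinearity of the Yoneda product yields $r\omega_i=\alpha\cdot(r\omega)=0$, hence $r\cdot\Ext^i_R(M,N)=\Hom_R(\Omega^i_R(M),N)\cdot(r\omega_i)=0$ for all $i\ge n$ and all $N$, i.e.\ $r\in\ca_R^n(M)$.

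For the second equality in (1), put $X=\Omega^{n-1}_R(M)$. The key observation is that $\ann_R\underline{\End}_R(X)$ is exactly the set of $r\in R$ for which $r\cdot\id_X$ factors through a projective module: since the $R$-action on the ring $\underline{\End}_R(X)$ is through $r\mapsto\overline{r\cdot\id_X}$, one has $r\cdot\underline{\End}_R(X)=0$ iff $\overline{r\cdot\id_X}=0$, i.e.\ iff $r\cdot\id_X$ factors through a projective. Next, dimension shifting along $0\to\Omega^n_R(M)\to P^{-(n-1)}\xrightarrow{\varepsilon}X\to 0$ identifies $\Ext^n_R(M,\Omega^n_R(M))$ with $\Ext^1_R(X,\Omega^n_R(M))$ and carries $\omega$ to the class $\omega'$ of this short exact sequence, so by the first equality $\ann_R\Ext^n_R(M,\Omega^n_R(M))=\ann_R(R\omega)=\ann_R(R\omega')$. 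Finally $r\omega'=0$ iff the pullback of $\omega'$ along $r\cdot\id_X$ splits, iff $r\cdot\id_X$ lifts along $\varepsilon$; and any map into $X$ that factors through some projective also lifts along the surjection $\varepsilon$ (lift its second factor along $\varepsilon$, using projectivity). Hence $\{r:r\omega'=0\}=\{r:r\cdot\id_X\text{ factors through a projective}\}=\ann_R\underline{\End}_R(X)$. I expect this step to be the main obstacle: one should \emph{not} expect $\Ext^n_R(M,\Omega^n_R(M))$ and $\underline{\End}_R(\Omega^{n-1}_R(M))$ to be isomorphic $R$-modules in general---only to have the same annihilator---so the argument must pass through the universal class and a lifting step rather than through an isomorphism of modules.

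Part (3) is then immediate from (1): $\ann_R\Ext^n_R(M,\mo(R))\subseteq\ann_R\Ext^n_R(M,\Omega^n_R(M))=\ca_R^n(M)\subseteq\ann_R\Ext^n_R(M,\mo(R))$. For (2), since $M$ and $\Omega^n_R(M)$ are finitely generated, $\Ext^n_R(M,\Omega^n_R(M))$ is finitely generated, so its support equals $V$ of its annihilator, which by (1) is $V(\ca_R^n(M))$; and localizing at $\p$ (which commutes with $\Ext^n_R(M,-)$ by \ref{localization} and sends $\Omega^n_R(M)$ to $\Omega^n_{R_\p}(M_\p)$ up to projective summands) shows $\Ext^n_R(M,\Omega^n_R(M))_\p\ne 0$ precisely when $\Omega^n_{R_\p}(M_\p)$ is non-projective, i.e.\ when $\pd_{R_\p}(M_\p)\ge n$---using over a local ring that a minimal free resolution truncates off iff its $n$-th syzygy is free. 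Part (4) is pure dimension shifting: the natural $R$-linear isomorphisms $\Ext^j_R(M,N)\cong\Ext^{\,j-i}_R(\Omega^i_R(M),N)$ for $j>i$ give $\ca_R^{n+i}(M)=\bigcap_{j\ge n+i,\,N}\ann_R\Ext^j_R(M,N)=\bigcap_{j'\ge n,\,N}\ann_R\Ext^{j'}_R(\Omega^i_R(M),N)=\ca_R^{n}(\Omega^i_R(M))$ (both sides being insensitive to projective summands since $n\ge1$), and letting $n\gg 0$ yields $\ca_R(M)=\ca_R(\Omega^i_R(M))$.

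Finally, for (5) I would again use (1) to replace the cohomological annihilator by the annihilator of the single finitely generated module $\Ext^n_R(M,\Omega^n_R(M))$, so that $S^{-1}\ca_R^n(M)=\ann_{S^{-1}R}(S^{-1}\Ext^n_R(M,\Omega^n_R(M)))=\ann_{S^{-1}R}\Ext^n_{S^{-1}R}(S^{-1}M,S^{-1}\Omega^n_R(M))$, using that localization commutes with annihilators of finitely generated modules and with $\Ext^n_R(M,-)$ (cf.\ \ref{localization}). Since $\Omega^n_{S^{-1}R}(S^{-1}M)$ is a direct summand of $S^{-1}\Omega^n_R(M)$ (localize a resolution) and both modules lie in $\mo(S^{-1}R)$, this last ideal is squeezed between $\ca_{S^{-1}R}^n(S^{-1}M)$---which annihilates $\Ext^n_{S^{-1}R}(S^{-1}M,-)$ of every finitely generated module, in particular of $S^{-1}\Omega^n_R(M)$---and $\ann_{S^{-1}R}\Ext^n_{S^{-1}R}(S^{-1}M,\Omega^n_{S^{-1}R}(S^{-1}M))=\ca_{S^{-1}R}^n(S^{-1}M)$ (an annihilator only grows on passing to a direct summand, then apply (1) over $S^{-1}R$); hence it equals $\ca_{S^{-1}R}^n(S^{-1}M)$. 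Taking the union over $n$ (localization commutes with the ascending union $\ca_R(M)=\bigcup_n\ca_R^n(M)$) gives $S^{-1}\ca_R(M)=\ca_{S^{-1}R}(S^{-1}M)$.
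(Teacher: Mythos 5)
Your proof is correct. Parts (2)--(4) follow essentially the same route as the paper. For part (1), which the paper disposes of by citing \cite[Lemma 2.14]{IT2016} and \cite[Lemma 3.8]{DeyTa}, you reconstruct an explicit argument via the universal extension class $\omega_n\in\Ext^n_R(M,\Omega^n_R(M))$; your caveat that $\Ext^n_R(M,\Omega^n_R(M))$ need not be \emph{isomorphic} to $\underline{\End}_R(\Omega^{n-1}_R(M))$ as an $R$-module is a genuine subtlety (there is in general only a natural inclusion $\underline{\End}_R(\Omega^{n-1}_R(M))\hookrightarrow\Ext^n_R(M,\Omega^n_R(M))$, which becomes an equality when $\Ext^n_R(M,R)=0$, but not always), and the detour through the class $\omega'$ and the lifting argument handles this correctly. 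The one real divergence is in part (5): the paper first reduces to $n=1$ via (4) and then, using the description of $\ca^1_R(M)$ via factorization of multiplication maps through projectives, carries out an explicit denominator-clearing computation with a localized $\Hom$; you instead invoke (1) to rewrite $\ca_R^n(M)$ as $\ann_R\Ext^n_R(M,\Omega^n_R(M))$, the annihilator of a \emph{finitely generated} module, so that $S^{-1}\ca_R^n(M)=\ca^n_{S^{-1}R}(S^{-1}M)$ drops out from the compatibility of localization with annihilators of finitely generated modules, with $\Ext^n_R(M,-)$ (cf.\ \ref{localization}), and with syzygies up to projective summands. Your route for (5) is shorter and avoids the explicit factorization argument; the paper's is marginally more elementary in that it does not lean on finite generation of the Ext module.
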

\begin{proof}
    (1) This follows from \cite[Lemma 2.14]{IT2016} and \cite[Lemma 3.8]{DeyTa}.

    (2) Combining with \ref{localization}, $\pd_{R_\p}(M_\p)\geq n$ if and only if $\p\in \Supp_R\Ext^n_R(M,\Omega^n_R(M))$. 
 Since $\Ext_R^n(M,\Omega^n_R(M))$ is finitely generated over $R$, 
    $$ \Supp_R\Ext^n_R(M,\Omega^n_R(M))=V(\ann_R\Ext^n_R(M,\Omega^n_R(M)).
    $$
Combining this with (1), the desired statement follows.

    (3) It is clear that 
    $$
\ca_R^n(M)\subseteq \ann_R\Ext^n_R(M,\mo(R))\subseteq \ann_R\Ext^n_R(M,\Omega^n_R(M)).
$$
By (1), these inclusions are equal.

    (4) For each $X\in \mo(R)$, consider the isomorphism
    $$
    \Ext^n_R(\Omega_R^1(M),X)\cong \Ext^{n+1}_R(M,X).
    $$
Combining with (3), we have $\ca_R^{n+1}(M)=\ca_R^n(\Omega^1_R(M))$. Then the first statement of (4) can be obtained by induction on the number $i$. Thus, for each $i\geq 1$,
$$
\ca_R(M)=\bigcup_{n\geq 0}\ca_R^{n+i}(M)=\bigcup_{n\geq 0}\ca_R^n(\Omega^i_R(M))=\ca_R(\Omega^i_R(M)).
$$
This completes the proof.

(5) The second statement follows from the first one. For the first statement, combining with (4), it suffices to prove the case of $n=1$. By (1), we get  an inclusion $S^{-1}\ca^1_R(M)\subseteq\ca^1_{S^{-1}R}(S^{-1}M)$. For the converse, assume $r/s\in \ca^1_{S^{-1}R}(S^{-1}M)$, where $s\in S$ and $r\in R$. It remains to prove that $r/s\in S^{-1}\ca^1_R(M)$. By (1), we get that the multiplication $r/s\colon S^{-1}M\rightarrow S^{-1}M$ factors through a finitely generated projective $S^{-1}R$-module, and hence $r/s\colon S^{-1}M\rightarrow S^{-1}M$ factors through a finitely generated free $S^{-1}R$-module. Combining with the fact that the isomorphism $S^{-1}\Hom_R(X,Y)\cong \Hom_{S^{-1}R}(S^{-1}X,S^{-1}Y)$ holds for $X,Y\in\mo(R)$, we conclude that there is a factorization
$$
\xymatrix{
S^{-1}M\ar[rr]^-{r/s}\ar[rd]_-{\alpha/s_1}&& S^{-1}M\\
& S^{-1}F\ar[ru]_-{\beta/s_2}& 
},
$$
where $F$ is a finitely generated free $R$-module, $s_1,s_2\in S$, $\alpha\colon M\rightarrow F$, and $\beta\colon F\rightarrow M$. From the factorization, it follows that there exists $t\in S$ such that $ts_2s_1r=ts\beta\alpha$. Note that $r/s=(ts_2s_1r)/(ts_2s_1s)$ and $ts_2s_1r=ts\beta\alpha$ factors through $F$. By (1), $ts_2s_1r\in \ca^1_R(M)$, and hence $r/s\in S^{-1}\ca_R^1(M)$. 
\end{proof}


\begin{chunk}\label{finite}
  Let $M$ be a finitely generated $R$-module.
  
(1) Following \cite[Definition 2.9]{Takahashi2014}, the \emph{infinite projective dimension locus} of $M$ is defined to be
$$
\IPD(M)\colonequals \{\p\in \Spec(R)\mid \pd_{R_\p}(M_\p)=\infty\}.
$$ 

(2) $\sup\{\pd_{R_\p}(M_\p)\mid \p\notin \IPD(M)\}$ is finite. Indeed, it follows from the proof of \cite[Lemma 4.5]{BM} that $$\{\p\in \Spec(R)\mid \pd_{R_\p}(M_\p)<\infty\}=\{\p\in \Spec(R)\mid \pd_{R_\p}(M_\p)\leq n\}$$
for some $n\geq 0$, and hence $\sup\{\pd_{R_\p}(M_\p)\mid \p\notin \IPD(M)\}\leq n$; see also \cite[Theorem 1.1]{AIL}.
\end{chunk}
\begin{proposition}\label{relation}
    Let $M$ be a finitely generated $R$-module. Then
$$
\IPD(M)=V(\ca_R(M))= V(\ca_R^{d+1}(M)),
$$
where $d=\sup\{\pd_{R_\p}(M_\p)\mid \p\notin \IPD(M)\}$.
\end{proposition}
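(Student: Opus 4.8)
The plan is to prove the two set equalities $\IPD(M)=V(\ca_R(M))$ and $V(\ca_R(M))=V(\ca_R^{d+1}(M))$ separately, using Lemma~\ref{basic} as the main tool together with the Noetherian stabilization of the ascending chain $\ca_R^0(M)\subseteq\ca_R^1(M)\subseteq\cdots$ described in \ref{def ca of modules}.

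First I would establish $\IPD(M)=V(\ca_R(M))$. The inclusion of a prime $\p$ in $\IPD(M)$ means $\pd_{R_\p}(M_\p)=\infty$, i.e. $\pd_{R_\p}(M_\p)\geq n$ for every $n\geq 1$. By Lemma~\ref{basic}(2) this says $\p\in V(\ca_R^n(M))$ for every $n\geq 1$, hence $\p\in\bigcap_{n\geq 1}V(\ca_R^n(M))=V\bigl(\bigcup_{n\geq 1}\ca_R^n(M)\bigr)=V(\ca_R(M))$. Conversely, if $\p\notin\IPD(M)$ then $\pd_{R_\p}(M_\p)=m<\infty$, so $\pd_{R_\p}(M_\p)<m+1$, and by Lemma~\ref{basic}(2) (equivalently, by the observation in \ref{def ca of modules} that $\ca_R^{m+1}(M)R_\p=\ca_{R_\p}^{m+1}(M_\p)=R_\p$, using Lemma~\ref{basic}(5)) we get $\p\notin V(\ca_R^{m+1}(M))\supseteq$ nothing directly, but since $\ca_R^{m+1}(M)\subseteq\ca_R(M)$ we obtain $\ca_R(M)R_\p=R_\p$, so $\p\notin V(\ca_R(M))$. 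This gives the first equality.

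Next I would show $V(\ca_R(M))=V(\ca_R^{d+1}(M))$ where $d=\sup\{\pd_{R_\p}(M_\p)\mid\p\notin\IPD(M)\}$, which is finite by \ref{finite}(2). Since $\ca_R^{d+1}(M)\subseteq\ca_R(M)$, the inclusion $V(\ca_R(M))\subseteq V(\ca_R^{d+1}(M))$ is automatic. For the reverse, it suffices to show $\sqrt{\ca_R^{d+1}(M)}\subseteq\sqrt{\ca_R(M)}$, or equivalently (since both are radical-closed under $V$) that $V(\ca_R^{d+1}(M))\subseteq V(\ca_R(M))=\IPD(M)$. So let $\p\in V(\ca_R^{d+1}(M))$; by Lemma~\ref{basic}(2) this means $\pd_{R_\p}(M_\p)\geq d+1$. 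By the definition of $d$, any prime $\q$ with $\q\notin\IPD(M)$ has $\pd_{R_\q}(M_\q)\leq d$; contrapositively, $\pd_{R_\p}(M_\p)\geq d+1$ forces $\p\in\IPD(M)$. Hence $\p\in\IPD(M)=V(\ca_R(M))$, completing the equality.

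I do not anticipate a serious obstacle here; the argument is essentially a bookkeeping exercise combining Lemma~\ref{basic}(2), (5) with the fact that $\ca_R(M)$ is the union of the $\ca_R^n(M)$ and that $V(-)$ turns unions of ideals into intersections of closed sets. The one point requiring a little care is the finiteness of $d$ and the sharpness of the index $d+1$: I would want to make explicit that $\ca_R^{d+1}(M)$ is already the stable value $\ca_R(M)$ after localizing away from $\IPD(M)$ — more precisely, for $\p\notin\IPD(M)$ one has $\ca_R^{d+1}(M)R_\p=R_\p$ by Lemma~\ref{basic}(2) since $\pd_{R_\p}(M_\p)\leq d<d+1$, while for $\p\in\IPD(M)$ both $\ca_R^{d+1}(M)$ and $\ca_R(M)$ lie in $\p$. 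This local comparison is really what makes the three sets coincide, and it is worth stating cleanly rather than leaving implicit.
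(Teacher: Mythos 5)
Your argument is correct and follows essentially the same route as the paper's: both proofs rest entirely on Lemma~\ref{basic}(2), on the identity $\bigcap_{n}V(\ca_R^n(M))=V\bigl(\bigcup_n\ca_R^n(M)\bigr)=V(\ca_R(M))$, and on the same contradiction argument (a prime $\p\supseteq\ca_R^{d+1}(M)$ with $\p\notin\IPD(M)$ would have $\pd_{R_\p}(M_\p)$ simultaneously $\le d$ and $\ge d+1$) for the second equality. The only cosmetic differences are that you phrase the first equality as a pointwise biconditional rather than the paper's chain of set identities (your separate ``conversely'' paragraph is therefore redundant, since Lemma~\ref{basic}(2) is already stated as a set equality), and you invoke Lemma~\ref{basic}(5) as an optional alternative, which the paper does not need here.
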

\begin{proof}
     The first equality follows from the following:
    \begin{align*}
        \IPD(M)& =\bigcap_{n\geq 1}\Supp_R \Ext^n_R(M,\Omega^n_R(M))\\
        & =\bigcap_{n\geq 1}V(\ca_R^n(M))\\
        &= V(\bigcup_{n\geq 1}\ca_R^n(M))\\
        &=V(\ca_R(M)),
    \end{align*}
  where the first two equalities follow from Lemma \ref{basic}; the first equality also follows directly from the proof of \cite[Lemma 2.3]{IT2019}.

Next, we prove the second equality.  The inclusion $V(\ca_R(M))\subseteq V(\ca_R^{d+1}(M))$ is clear. Combining with the first equality, it remains to prove $V(\ca_R^{d+1}(M))\subseteq \IPD(M)$. If not, let $\p$ be a prime ideal containing $\ca_R^{d+1}(M)$ and $\p\notin \IPD(M)$. By assumption, $\pd_{R_\p}(M_\p)\leq d$. However, it follows from Lemma \ref{basic} that $\pd_{R_\p}(M_\p)\geq d+1$. This is a contradiction. 
\end{proof}

Recall that $\CM(R)$ is denoted to be the category of maximal Cohen--Macaulay $R$-modules (see \ref{def of MCM}). Set $$\OCM^\times(R)\colonequals \{M\in \Omega^1_R(\CM(R))\mid M \text{ has no nonzero projective summands}\};$$
see the definition of $\Omega^1_R(\CM(R))$ in \ref{def of syzygy}. Next, we calculate the uniform degree of the cohomological annihilators. It turns out that the uniform degree behaves well when the ring is strongly Gorenstein; see Proposition \ref{relation CA(R) and ca(R-mo)}.

\begin{lemma}\label{ca stablize}
    (1) Let $M$ be a finitely generated $R$-module such that $\Ext^{i}_R(M,R)=0$ for all $0\leq m\leq i\leq n$, then 
    $
    \ca^m_R(M)=\cdots=\ca^{n+1}_R(M)
    $
    
    (2) If $(R,\m)$ is a $d$-dimensional Cohen--Macaulay local ring with a canonical module $\omega$. If $\OCM^\times(R)$  is closed under $\Hom_R(-,\omega)$, then
    $
    \ca_R^{d+1}(M)=\ca_R(M)
    $
   for each $M\in \mo(R)$. In particular,
   $
\CA^{d+1}(R)=\CA(R)=\ca_R(\mo(R)).
$
\end{lemma}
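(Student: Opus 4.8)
The plan is to prove the two parts in order, using part (1) as the main engine for part (2). For part (1), the key observation is Lemma \ref{basic}(1), which identifies $\ca_R^{i}(M)$ with $\ann_R \Ext^i_R(M, \Omega^i_R(M))$, together with Lemma \ref{basic}(3), which says $\ca_R^i(M) = \ann_R \Ext^i_R(M, \mo(R))$. First I would reduce to showing $\ca_R^m(M) = \ca_R^{m+1}(M)$ under the hypothesis $\Ext^i_R(M,R) = 0$ for $m \le i \le n$ (then iterate). The inclusion $\ca_R^m(M) \subseteq \ca_R^{m+1}(M)$ is automatic. For the reverse, I would take $r \in \ca_R^{m+1}(M)$ and show $r$ kills $\Ext^m_R(M,N)$ for every $N \in \mo(R)$. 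The standard device: pick a short exact sequence $0 \to K \to F \to N \to 0$ with $F$ free; the long exact sequence in $\Ext_R(M,-)$ gives a surjection (or the relevant piece) $\Ext^m_R(M,N) \to$ something, fitting into $\Ext^m_R(M,F) \to \Ext^m_R(M,N) \to \Ext^{m+1}_R(M,K) \to \Ext^{m+1}_R(M,F)$. Since $F$ is free and $\Ext^m_R(M,R) = 0$ (as $m$ is in the range $[m,n]$), we get $\Ext^m_R(M,F) = 0$, so $\Ext^m_R(M,N) \hookrightarrow \Ext^{m+1}_R(M,K)$, which is killed by $r$. Hence $r \in \ca_R^m(M)$. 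Iterating this argument up the range $[m, n+1]$ (at each stage the needed vanishing $\Ext^i_R(M,F)=0$ holds because $i \le n$) yields $\ca_R^m(M) = \cdots = \ca_R^{n+1}(M)$.

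For part (2), I would first handle the case of a module $M \in \OCM^\times(R)$, or more precisely reduce the general $M \in \mo(R)$ to such a module. By Lemma \ref{basic}(4), $\ca_R(M) = \ca_R(\Omega^i_R(M))$ for all $i \ge 1$ and $\ca_R^{d+1}(M) = \ca_R^1(\Omega^d_R(M))$ — but I need to be a bit careful about indices, so instead: since $R$ is Cohen--Macaulay of dimension $d$ with canonical module $\omega$, for any $M$ the syzygy $\Omega^d_R(M)$ is maximal Cohen--Macaulay, hence (stripping projective summands) lies in $\CM(R)$, and $\Omega^{d+1}_R(M) \in \Omega^1_R(\CM(R))$; after removing projective summands it lies in $\OCM^\times(R)$. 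By Lemma \ref{basic}(4) again, $\ca_R(M) = \ca_R(\Omega^{d+1}_R(M))$ and $\ca_R^{d+1}(M) = \ca_R^{?}(\cdots)$ — the cleanest route is to show directly that for $N \in \OCM^\times(R)$ one has $\ca_R^1(N) = \ca_R(N)$, and then note that any $M$ has $\ca_R^{d+1}(M) = \ca_R^1(\Omega^d_R M)$ (using Lemma \ref{basic}(4) with $n=1$, $i=d$) while $\Omega^d_R M$ is MCM, so it remains to treat MCM modules and show $\ca_R^1 = \ca_R$ there; passing one more syzygy lands in $\OCM^\times(R)$ if needed.

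To show $\ca_R^1(N) = \ca_R(N)$ for $N \in \OCM^\times(R)$, the idea is to apply part (1) with $m = 1$: it suffices to verify $\Ext^i_R(N, R) = 0$ for all $i \ge 1$, i.e.\ that $N$ is a ``totally reflexive-like'' module with no cohomology against $R$ in positive degrees, so that part (1) gives $\ca_R^1(N) = \ca_R^n(N) = \ca_R(N)$ for all $n$. Here is where the hypothesis ``$\OCM^\times(R)$ is closed under $\Hom_R(-,\omega)$'' enters: write $N = \Omega^1_R(C)$ with $C \in \CM(R)$. Dualizing into $\omega$, Cohen--Macaulay duality gives that $\Hom_R(C,\omega)$ is again MCM, $\Ext^i_R(C,\omega) = 0$ for $i > 0$, and one computes $\Ext^i_R(N, \omega) \cong \Ext^{i+1}_R(C,\omega) = 0$ for $i \ge 1$; moreover $N$ itself is MCM. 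Now the closure hypothesis tells us $\Hom_R(N,\omega) \in \OCM^\times(R)$, so it too is a first syzygy of an MCM module; repeating, $\Ext^i_R(\Hom_R(N,\omega), \omega) = 0$ for $i \ge 1$. To get vanishing against $R$ rather than $\omega$, I would use that $N \cong \Hom_R(\Hom_R(N,\omega), \omega)$ (MCM modules are $\omega$-reflexive) and the standard spectral sequence / Ext computation: the vanishing of $\Ext^{>0}_R(-, \omega)$ on the relevant modules together with $N$ being a high syzygy forces $\Ext^i_R(N, R) = 0$ for $i \ge 1$ (this is the Auslander--Bridger / Buchweitz-style argument that a module in the stable image of $\Omega$ which is $\omega$-reflexive with vanishing $\omega$-cohomology is totally reflexive over a Gorenstein-on-the-punctured-spectrum ring — but here we only need the Ext-vanishing, which follows from the duality). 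Finally, the ``in particular'' statement: taking the intersection over all $M \in \mo(R)$ of $\ca_R^{d+1}(M) = \ca_R(M)$ gives $\CA^{d+1}(R) = \bigcap_M \ca_R^{d+1}(M) = \bigcap_M \ca_R(M) = \ca_R(\mo(R))$, and since $\CA(R) = \CA^n(R)$ for $n \gg 0$ with $\CA^{d+1}(R) \subseteq \CA^n(R) \subseteq \ca_R(\mo(R))$ (the last inclusion is Lemma \ref{test regular}(1)), all three coincide.

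The main obstacle I anticipate is the precise justification that $\Ext^i_R(N,R) = 0$ for $i \ge 1$ when $N \in \OCM^\times(R)$ and $\OCM^\times(R)$ is $\Hom_R(-,\omega)$-closed — i.e.\ converting the manifest vanishing of $\omega$-cohomology (which comes for free from Cohen--Macaulay duality) into vanishing of $R$-cohomology. This is the step that genuinely uses the closure hypothesis rather than just formal syzygy manipulation, and getting the indices and the reflexivity isomorphism $N \cong N^{\omega\omega}$ lined up correctly (together with tracking which modules have no projective summands, since $\OCM^\times(R)$ bakes that in) is where the real care is needed. Everything else — part (1), the reduction of general $M$ to a high syzygy via Lemma \ref{basic}(4), and the final intersection argument — is routine bookkeeping with the results already established.
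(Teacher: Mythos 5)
Your argument for part (1) is correct and is essentially the paper's proof, just phrased with an arbitrary $N\in\mo(R)$ and a syzygy sequence $0\to K\to F\to N\to 0$ rather than the specific sequence $0\to\Omega^{j+1}_RM\to P\to\Omega^j_RM\to 0$; both use the vanishing $\Ext^j_R(M,\text{free})=0$ to get an injection $\Ext^j\hookrightarrow\Ext^{j+1}$.

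Part (2), however, has a genuine gap, and you correctly identified it as the danger spot but then convinced yourself it could be patched — it cannot. Your plan is to reduce to $N\in\OCM^\times(R)$ and then invoke part (1) with $m=1$, which requires $\Ext^i_R(N,R)=0$ for all $i\geq 1$. This vanishing simply does not hold for general Cohen--Macaulay rings with the closure hypothesis. What Cohen--Macaulay duality hands you for free is $\Ext^{>0}_R(N,\omega)=0$, and $\omega$-reflexivity gives $N\cong\Hom_R(\Hom_R(N,\omega),\omega)$ — but there is no route from these to $\Ext^{>0}_R(N,R)=0$ unless $\omega\cong R$, i.e.\ unless $R$ is Gorenstein. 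Indeed, for a non-Gorenstein CM local ring $R$ the module $N=\Omega^1_R(\omega)$ (stripped of free summands) lies in $\OCM^\times(R)$, and $\Ext^i_R(N,R)\cong\Ext^{i+1}_R(\omega,R)$, which is generically nonzero: results in the direction of Tachikawa's conjecture say that $\Ext^{>0}_R(\omega,R)=0$ forces $R$ to be Gorenstein. So the entire reduction-to-part-(1) strategy collapses once you leave the Gorenstein case, which is exactly the case (2) is designed to cover (e.g.\ rings of minimal multiplicity).

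The paper avoids the problem entirely by never aiming at vanishing against $R$. Instead, from the hypothesis that $\Hom_R(\Omega^{i+1}_RM,\omega)\in\OCM(R)$ it invokes a result of Kobayashi--Takahashi (their Proposition 5.2) to produce a short exact sequence $0\to K\to\omega^{\oplus s}\to\Omega^{i+1}_RM\to 0$. Applying $\Hom_R(\Omega^i_RM,-)$ and using only $\Ext^{>0}_R(\Omega^i_RM,\omega)=0$ — the vanishing you actually have — gives $\Ext^1_R(\Omega^i_RM,\Omega^{i+1}_RM)\cong\Ext^2_R(\Omega^i_RM,K)$, hence $\Ext^{i+1}_R(M,\Omega^{i+1}_RM)\cong\Ext^{i+2}_R(M,K)$, and so $\ca^{i+2}_R(M)\subseteq\ca^{i+1}_R(M)$. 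This is a dimension shift in the \emph{second} variable against $\omega$, not a reduction to part (1). If you want to salvage part (2), you need this (or an equivalent) replacement for the false Ext-vanishing.
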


\begin{proof}
    (1) For each $j\geq 0$, consider the short exact sequence
    $$
0\rightarrow \Omega_R^{j+1}(M)\rightarrow P\rightarrow \Omega_R^{j}(M)\rightarrow 0,
$$
where $P$ is a finitely generated projective $R$-module.  Applying $\Hom_R(M,-)$ to this short exact sequence, we get an exact sequence
$$\Ext^{j}_{R}(M, P) \to \Ext^{j}_{R}(M, \Omega^{j}_{R} (M)) \to \Ext^{j+1}_{R}(M, \Omega^{j+1}_{R} (M)).$$
Set $j=m$. The assumption yields that $\Ext^m_R(M,P)=0$. Thus, the above exact sequence yields that $\ca^{m+1}_R(M)\subseteq \ca^m_R(M)$ (see Lemma \ref{basic}), and hence $\ca^m_R(M)=\ca^{m+1}_R(M)$. The desired result follows similarly by choosing $j=m+1,\ldots,n$.

(2) For each $M\in \mo(R)$ and $i\geq d$, $\Omega^{i}_R(M)$ is in $\CM(R)$. Choose a minimal free resolution of $\Omega^i_R(M)$:
$
0\rightarrow \Omega^{i+1}_R(M)\rightarrow F\rightarrow \Omega^i_R(M)\rightarrow 0,
$
where $F$ is a finitely generated free $R$-module. In this case, $\Omega^{i+1}_R(M)$ does not have a nonzero projective summand (see \cite[Lemma 1.4]{Herzog1978}), and hence $\Omega^{i+1}_R(M)\in\OCM^\times(R)$.
By assumption, $\Hom_R(\Omega^{i+1}_R(M),\omega)\in \OCM(R)$. It follows from \cite[Proposition 5.2]{K-T} that there exists a short exact sequence
$$
0\rightarrow K\rightarrow \omega^{\oplus s}\rightarrow \Omega^{i+1}_R(M)\rightarrow 0
$$
in $\mo(R)$.
Note that $\Ext^{>0}_R(\Omega^i_R(M),\omega)=0$ as $\Omega^i_R(M)\in \CM(R)$. Applying $\Hom_R(\Omega^i_R(M),-)$ to the above short exact sequence, 
$
 \Ext^1_R(\Omega^i_R(M),\Omega^{i+1}_R(M))\cong \Ext^2_R(\Omega^i_R(M),K).
$
We conclude from this that $\Ext^{i+1}_R(M,\Omega^{i+1}_R(M))\cong \Ext^{i+2}_R(M,K)$. 
This isomorphism implies 
$\ca_R^{i+2}(M)\subseteq \ann_{R} \Ext^{i+1}_R(M,\Omega^{i+1}_R(M))=\ca_R^{i+1}(M),
$
where the second equality follows from Lemma \ref{basic}. Hence, $\ca_R^{i+1}(M)=\ca_R^{i+2}(M)$ for each $i\geq d$. This implies $\ca_R(M)=\ca_R^{d+1}(M)$.

For the second statement, it follows from Lemma \ref{test regular} that $\CA^{d+1}(R)\subseteq \CA(R)\subseteq \ca_R(\mo(R))$. The desired result now follows from the following:
$$
\CA^{d+1}(R)=\bigcap_{M\in \mo(R)}\ca_R^{d+1}(M)=\bigcap_{M\in \mo(R)}\ca_R(M)=\ca_R(\mo(R)),
$$
This finishes the proof.
\end{proof}
\begin{chunk}\label{Rfd}
    For a finitely generated $R$-module $M$, let $\Rfd_R (M)$ denote the \emph{(large) restricted flat dimension} of $M$. That is, 
$$
\Rfd_R (M) 
= \sup \{\depth (R_{\p})-\depth (M_{\p}) \mid
\p \in \Spec R \}.
$$
By \cite[Theorem 1.1]{AIL} with \cite[Proposition 2.2 and Theorem 2.4]{CFF}, 
$\Rfd_R (M) \in \mathbb{N} \cup \{-\infty \}$, and $\Rfd_R(M)=-\infty$ if and only if $M=0$.
\end{chunk}
 
\begin{proposition}\label{relation CA(R) and ca(R-mo)}
        Let $R$ be a Gorenstein ring. Then:
\begin{enumerate}
    \item For each $M\in \CM(R)$, $\ca_R^1(M)=\ca_R(M)$.

    \item For each nonzero finitely generated $R$-module $M$, $\ca_R^{l+1}(M)=\ca_R(M)$, where $l=\Rfd_R(M)$. 
    \item If, in addition, $R$ has finite Krull dimension $d$, then there are equalities 
$$\CA^{d+1}(R)=\CA(R)=\ca_R(\mo(R))=\bigcap_{M\in \CM(R)}\ann_R\underline{\End}_R(M).$$
\end{enumerate}
\end{proposition}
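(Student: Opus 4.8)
The plan is to get (1) straight out of Lemma \ref{ca stablize}(1), to bootstrap to (2) by replacing $M$ with a maximal Cohen--Macaulay syzygy, and then to assemble (3) from (2) together with Lemma \ref{basic} and Lemma \ref{test regular}.

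For (1): since $R$ is Gorenstein, any $M\in\CM(R)$ satisfies $\Ext^i_R(M,R)=0$ for all $i\ge 1$ (see \ref{def of MCM}), so Lemma \ref{ca stablize}(1) applies with $m=1$ and $n$ arbitrarily large to give $\ca^1_R(M)=\ca^2_R(M)=\cdots$; since $\ca_R(M)=\ca^N_R(M)$ for $N\gg 0$ (see \ref{def ca of modules}), this is precisely $\ca^1_R(M)=\ca_R(M)$. For (2): set $l=\Rfd_R(M)$, which lies in $\mathbb{N}$ by \ref{Rfd} because $M\neq 0$. The crux is that $\Omega^l_R(M)\in\CM(R)$. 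Indeed, a Gorenstein ring is Cohen--Macaulay, so $\depth R_\p=\dim R_\p$ for every $\p$; localizing a finite free resolution of $M$ and iterating the depth inequality $\depth(\Omega^1_{R_\p}N)\ge\min\{\depth R_\p,\depth N+1\}$ (the free summands that appear upon localizing only help) yields $\depth(\Omega^l_R M)_\p\ge\min\{\depth R_\p,\depth M_\p+l\}$, and by the defining inequality of $\Rfd_R(M)$ the right-hand side equals $\depth R_\p=\dim R_\p$. Hence $\Omega^l_R(M)$ is maximal Cohen--Macaulay. If $l\ge 1$, apply part (1) to $\Omega^l_R(M)$ and use Lemma \ref{basic}(4), which gives $\ca^{l+1}_R(M)=\ca^1_R(\Omega^l_R M)$ and $\ca_R(M)=\ca_R(\Omega^l_R M)$, to get $\ca^{l+1}_R(M)=\ca_R(M)$; if $l=0$ then $M$ is itself maximal Cohen--Macaulay and (2) is just (1).

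For (3): the chain $\CA^{d+1}(R)\subseteq\CA(R)\subseteq\ca_R(\mo(R))$ follows from the ascending chain defining $\CA(R)$ together with Lemma \ref{test regular}(1). For the opposite containment, let $M\in\mo(R)$ be nonzero; then $\Rfd_R(M)\le\dim R=d$ (as $\depth R_\p\le\dim R$ and $\depth M_\p\ge 0$ at each $\p\in\Supp_R M$), so part (2) gives $\ca_R(M)=\ca^{\Rfd_R(M)+1}_R(M)\subseteq\ca^{d+1}_R(M)$, and intersecting over all $M$ (the term $M=0$ contributing $R$) yields $\ca_R(\mo(R))=\bigcap_{M\in\mo(R)}\ca^{d+1}_R(M)=\CA^{d+1}(R)$; thus the three ideals coincide. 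For the remaining equality, part (1) and Lemma \ref{basic}(1) give $\ca_R(N)=\ca^1_R(N)=\ann_R\underline{\End}_R(N)$ for every $N\in\CM(R)$, so $\bigcap_{N\in\CM(R)}\ann_R\underline{\End}_R(N)=\bigcap_{N\in\CM(R)}\ca_R(N)$; this contains $\ca_R(\mo(R))$ trivially, and for the reverse inclusion note that for each $M\in\mo(R)$ the syzygy $\Omega^d_R(M)$ lies in $\CM(R)$ --- being a syzygy of the maximal Cohen--Macaulay module $\Omega^{\Rfd_R M}_R(M)$, it is again maximal Cohen--Macaulay by the depth inequality, using $\Rfd_R M\le d$ --- and satisfies $\ca_R(M)=\ca_R(\Omega^d_R M)$ by Lemma \ref{basic}(4), so $\bigcap_{N\in\CM(R)}\ca_R(N)\subseteq\bigcap_{M\in\mo(R)}\ca_R(\Omega^d_R M)=\ca_R(\mo(R))$.

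I expect the main obstacle to be the depth bookkeeping in part (2): one must track that localizing an $R$-free resolution of $M$ and then discarding the free summands that appear in $(\Omega^l_R M)_\p$ preserves the lower bound $\depth(\Omega^l_R M)_\p\ge\min\{\depth R_\p,\depth M_\p+l\}$ at every prime $\p$, and then recognize that the definition of $\Rfd_R(M)$ is exactly calibrated to push this bound up to the maximal Cohen--Macaulay threshold $\dim R_\p$. Granting that, parts (1) and (3) are formal manipulations of the lemmas already available.
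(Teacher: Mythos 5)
Your proof is correct. Parts (1) and (3) follow the same overall plan as the paper, but you take a genuinely more self-contained route in (2) and in the closing equality of (3), and it's worth noting the differences.

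For (2), after establishing that $\Omega^l_R(M)\in\CM(R)$ (which you prove directly via the iterated depth inequality; the paper simply cites \cite[1.3.7]{BH}), you finish by the chain
\[
\ca^{l+1}_R(M)=\ca^1_R(\Omega^l_R M)=\ca_R(\Omega^l_R M)=\ca_R(M),
\]
using Lemma \ref{basic}(4) and part (1). The paper instead invokes Buchweitz's \cite[Theorem B.1.8]{Buchweitz} to identify $\underline{\End}_R(\Omega^{l+i-1}_R M)$ with $\underline{\End}_R(\Omega^l_R M)$ for all $i\ge 1$; your route avoids appealing to the triangulated structure of the stable category of MCM modules and reduces directly to (1), which is cleaner.

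For (3), the paper cites Esentepe's \cite[Lemma 2.3]{Esentepe} for the identity $\CA(R)=\bigcap_{N\in\CM(R)}\ann_R\underline{\End}_R(N)$ and then appeals to the argument in Lemma \ref{ca stablize}. You instead derive all four equalities internally from parts (1)–(2), Lemma \ref{basic}(1)(4), the observation $\Rfd_R(M)\le d$, and the fact that $\Omega^d_R(M)\in\CM(R)$ for every $M$. This reproves Esentepe's lemma as a byproduct. Both approaches are valid; yours is more self-contained, while the paper's is shorter because it leans on existing results. The depth bookkeeping you flag as the main risk in (2) is handled correctly: the localized syzygy $(\Omega^l_R M)_\p$ differs from a minimal syzygy of $M_\p$ only by a free summand (which only raises depth), the iterated depth lemma gives $\depth(\Omega^l_R M)_\p\ge\min\{\depth R_\p,\depth M_\p+l\}$, and the defining inequality of $\Rfd_R(M)$ together with $\depth R_\p=\dim R_\p$ (Gorenstein $\Rightarrow$ CM) closes the gap; the degenerate cases $M_\p=0$ and $(\Omega^l_R M)_\p=0$ are harmless.
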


\begin{proof}
    (1) For each $M\in \CM(R)$, we have $\Ext^{>0}_R(M,R)=0$; see \ref{def of MCM}. The desired result now follows from Lemma \ref{ca stablize}.

    (2) Note that \ref{Rfd} yields that $l=\Rfd_R(M)$ is finite. By \cite[1.3.7]{BH}, $\Omega^l_R(M)$ is maximal Cohen--Macaulay over $R$. 
    Let $i\geq 1$ be an integer. By Lemma \ref{basic}, we have
    $$
\ca_R^{l+i}(M)=\ann_R\underline{\End}_R(\Omega^{l+i-1}_R(M)).
$$
Note that $\underline{\End}_R(\Omega^{l+i-1}_R(M))\cong \underline{\End}_R(\Omega^l_R(M))$ as $\Omega^{l}_R(M)\in \CM(R)$; see \cite[Theorem B.1.8]{Buchweitz}. Thus, $\ca_R^{l+i}(M)=\ca_R^{l+1}(M)$, and hence $
\ca_R(M)=\ca_R^{l+1}(M)$. 

(3) In \cite[Lemma 2.3]{Esentepe}, Esentepe observed that $\CA(R)=\displaystyle\bigcap\limits_{M\in \CM(R)}\ann_R\underline{\End}_R(M).$
By the same argument in the proof of Lemma \ref{ca stablize}, we have $\CA^{d+1}(R)=\CA(R)=\ca_R(\mo(R))$. This completes the proof.
\end{proof}

\begin{remark}\label{n(mo(R))}
    (1) Keep the same assumption as Proposition \ref{relation CA(R) and ca(R-mo)} (3), the first author and the second author proved $\CA(R)=\CA^{d+1}(R)$ in \cite[Proposition 3.4]{DeyLiu}. As mentioned in the proof, the equality $\CA(R)=\displaystyle\bigcap_{M\in \CM(R)}\ann_R\underline{\End}_R(M)$ was due to Esentepe \cite[Lemma 2.3]{Esentepe}. 

    (2)  For a finitely generated $R$-module $M$, we define $$\operatorname{n}_R(M) = \inf \{ m \geq 0 \mid \ca^{m+1}_{R}(M) = \ca_{R}(M) \}.$$ Since $R$ is Noetherian, $\operatorname{n}_R(M)$ is always finite. For instance, if $\pd_{R}(M)$ is finite, then $\operatorname{n}_R(M) = \pd_R(M)$. For a full subcategory $\mathcal C$ of $\mo(R)$, we set $\operatorname{n}_R(\mathcal C) = \sup \{ \operatorname{n}_R(M) \mid M\in \mathcal C \}$.  If $\operatorname{n}_R(\mathcal C)<\infty$, then $  \ca_{R}(\mathcal C) = \displaystyle\bigcap_{M\in\mathcal C}\ca_{R}(M) = \displaystyle\bigcap_{M\in\mathcal C}\ca^{k+1}_{R}(M)$ for each $k \geq \operatorname{n}_R(\mathcal C)$. Hence, if $\operatorname{n}_R(\mo(R))<\infty$ is finite, then 
    $$\ca_{R}(\mo(R)) = \bigcup_{k \geq \operatorname{n}_R(\mo(R))} (\bigcap_{M \in \mo(R)} \ca^{k+1}_{R}(M)) =\mathfrak{ca}(R).$$
 Note that the ring  $R$, under the assumption of Lemma \ref{ca stablize} (2) or Proposition \ref{relation CA(R) and ca(R-mo)} (2), satisfies \( \operatorname{n}_R(\mo(R)) \) is finite.

 (3) Let $(R,\m)$ be a commutative Noetherian local ring with $\m^2=0$. For each $M\in\mo(R)$, the assumption that $\m^2=0$ implies that $\Omega_R^1(M)$ is a direct sum of some finite copies of $R$ and $k$. By Lemma \ref{basic} (1), we conclude that $\ca^2_R(M)=\ca_R(M)$ for each $M\in\mo(R)$, and hence $\operatorname{n}_R(\mo(R))\leq 1$. 
\end{remark}
Let $\Max(R)$ denote the subset of $\Spec(R)$ consisting of all maximal ideals of $R$. In view of Remark \ref{n(mo(R))} (2), the following result may be regarded as a version of the local-global principle for projective dimensions.
\begin{proposition}\label{local-global-n(mo(R))}
Let $R$ be a commutative Noetherian ring. Then: 

\begin{enumerate}
\item For each  $M \in\mo(R)$, $\operatorname{n}_R(M)=\sup\{\operatorname{n}_{R_\p}(M_\p)\mid \p \in \Spec(R)\}
=\sup\{\operatorname{n}_{R_\m}(M_\m)\mid \m \in \Max(R)\}$. In particular, $\operatorname{n}_R(\mo(R))  =\sup\{\operatorname{n}_{R_\p}(\mo(R_\p)) \mid \p \in \Spec(R)\}=\sup\{\operatorname{n}_{R_\m}(\mo(R_\m)) \mid \m \in \Max(R)\}$. 

    \item $\operatorname{n}_R(\mo(R))\geq \dim(R) - 1$. In particular, if $\operatorname{n}_R(\mo(R))$ is finite, then $R$ has finite Krull dimension. 
\end{enumerate}
\end{proposition}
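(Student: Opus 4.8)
For part (1) the plan is a direct localization argument resting on Lemma~\ref{basic}(5). Fix $M\in\mo(R)$ and $m\ge 0$. Since $\ca^0_R(M)\subseteq\ca^1_R(M)\subseteq\cdots$ is an increasing chain of ideals with union $\ca_R(M)$, we have $\operatorname{n}_R(M)\le m$ if and only if $\ca^{m+1}_R(M)=\ca_R(M)$. The inclusion $\ca^{m+1}_R(M)\subseteq\ca_R(M)$ always holds, so this equality of ideals can be tested after localizing at every prime, equivalently every maximal, ideal; and by Lemma~\ref{basic}(5) the localization of this equality at $\p$ is precisely the condition $\operatorname{n}_{R_\p}(M_\p)\le m$. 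Hence $\operatorname{n}_R(M)=\sup_\p\operatorname{n}_{R_\p}(M_\p)=\sup_\m\operatorname{n}_{R_\m}(M_\m)$. Taking the supremum over $M$, and using that every finitely generated $R_\p$-module is the localization of a finitely generated $R$-module (clear denominators in a finite presentation), so that $\sup_{M\in\mo(R)}\operatorname{n}_{R_\p}(M_\p)=\operatorname{n}_{R_\p}(\mo(R_\p))$, gives the assertion for $\mo(R)$.

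For part (2): since $R_\p$ is Noetherian local we have $\operatorname{ht}(\p)=\dim R_\p<\infty$, so it suffices to prove $\operatorname{n}_R(\mo(R))\ge\operatorname{ht}(\p)-1$ for every prime $\p$; the supremum over $\p$ then yields $\operatorname{n}_R(\mo(R))\ge\dim R-1$ (read as $\operatorname{n}_R(\mo(R))=\infty$ when $\dim R=\infty$), whence the final clause. The geometric input I would use is the statement, proved separately below, that every Noetherian local ring $(S,\n)$ with $\dim S=d\ge 1$ admits a prime $\q$ with $\operatorname{ht}(\q)=d-1$ and $S_\q$ Cohen--Macaulay. Granting it, fix $\p$ and put $h=\operatorname{ht}(\p)$; if $h=0$ there is nothing to prove, so assume $h\ge 1$ and apply the statement to $S=R_\p$, obtaining a prime $\q\subseteq\p$ with $R_\q=(R_\p)_{\q R_\p}$ Cohen--Macaulay of dimension $h-1$, so $\depth(R_\q)=h-1$. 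Let $\underline{x}$ be a maximal $R_\q$-regular sequence; then $R_\q/(\underline{x})$ has projective dimension $h-1$ over $R_\q$. Choose $M\in\mo(R)$ with $M_\q\cong R_\q/(\underline{x})$ (again clearing denominators in a presentation over $R_\q$), so $\pd_{R_\q}(M_\q)=h-1<\infty$. By \ref{def ca of modules} and Lemma~\ref{basic}(5) we then have $(\ca^{h-1}_R(M))_\q=\ca^{h-1}_{R_\q}(M_\q)\ne R_\q=\ca_{R_\q}(M_\q)=(\ca_R(M))_\q$; since $\ca^{h-1}_R(M)\subseteq\ca_R(M)$, this forces the inclusion to be strict, so $\operatorname{n}_R(M)\ge h-1$ and $\operatorname{n}_R(\mo(R))\ge h-1=\operatorname{ht}(\p)-1$.

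It remains to prove the geometric statement, which I would do by induction on $d$. For $d=1$ any minimal prime works. For $d\ge 2$: if $\depth(S)=0$, replace $S$ by $S/T$, where $T\subseteq S$ is the (finite-length) submodule of elements annihilated by a power of $\n$. Since $\Supp(T)\subseteq\{\n\}$, $T$ lies in every prime $\ne\n$, hence in every minimal prime and so in $\nil(S)$; therefore $\Spec(S/T)\to\Spec(S)$ is a homeomorphism preserving heights, $\dim(S/T)=d$, $(S/T)_\q=S_\q$ for all $\q\ne\n$, and $\depth(S/T)\ge 1$. Thus a height-$(d-1)$ prime of $S/T$ with Cohen--Macaulay localization is one of $S$ as well, and it suffices to treat $S/T$. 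So assume $\depth(S)\ge 1$; pick $x\in\n$ outside every associated prime of $S$, so that $x$ is a nonzerodivisor and $\dim(S/xS)=d-1$. By the inductive hypothesis there is a prime $\bar\q$ of $S/xS$ with $\operatorname{ht}_{S/xS}(\bar\q)=d-2$ and $(S/xS)_{\bar\q}$ Cohen--Macaulay; its preimage $\q$ in $S$ contains $x$, satisfies $S_\q/xS_\q\cong(S/xS)_{\bar\q}$ with $x$ a nonzerodivisor on $S_\q$, hence $S_\q$ is Cohen--Macaulay with $\depth(S_\q)=d-1$, and $\q\ne\n$ because $\operatorname{ht}_{S/xS}(\bar\q)=d-2<\dim(S/xS)$, so $\operatorname{ht}_S(\q)=d-1$.

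The main obstacle is precisely the $\depth(S)=0$ case of this geometric statement: one cannot cut down the dimension by a nonzerodivisor, and the obvious substitutes — passing to $S/\nil(S)$, or to $S/\q_0$ for a minimal prime $\q_0$ — are not legitimate, because depth can jump upward under such quotients, so the Cohen--Macaulay prime produced there need not have Cohen--Macaulay preimage in $S$. The fix is to quotient only by the finite-length torsion submodule $T$: it is small enough (supported just at $\n$, contained in $\nil(S)$) that it changes neither heights nor localizations away from $\n$, yet it leaves a ring of positive depth. Everything else is routine bookkeeping with Lemma~\ref{basic}.
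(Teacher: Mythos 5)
Both parts of your argument are correct. Part (1) is essentially the paper's proof: you test the equality $\ca_R^{m+1}(M)=\ca_R(M)$ locally using Lemma~\ref{basic}(5), and pass to $\mo(R)$ via density of localization.

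For part (2) you take a genuinely different, and considerably longer, route. The paper's proof simply observes that for each $\p$, killing a maximal $R_\p$-regular sequence $J$ gives $\operatorname{n}_{R_\p}(R_\p/J)=\pd_{R_\p}(R_\p/J)=\depth(R_\p)$, and then cites \cite[Lemma 1.4]{CFF} for $\sup_\p\depth(R_\p)\geq\dim R-1$; combined with part (1), this is the whole argument. You instead prove from scratch the stronger statement that every Noetherian local ring $(S,\n)$ of dimension $d\geq 1$ admits a prime $\q$ of height $d-1$ with $S_\q$ Cohen--Macaulay, and then kill a maximal regular sequence in that particular $R_\q$. Your inductive proof of this auxiliary lemma is correct, and your treatment of the $\depth S=0$ case by passing to $S/H^0_\n(S)$ is the right fix (your check that $H^0_\n(S)\subseteq\nil(S)$ and that the quotient has positive depth both hold). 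What your lemma buys is a self-contained proof: it specializes to $\sup_\p\depth(R_\p)\geq\dim R-1$ (a CM $R_\q$ of height $d-1$ has depth $d-1$), which is exactly the cited CFF inequality. But it also proves more than is needed — only a prime with $\depth(R_\q)$ close to $\dim R$ is required, and there is no need to control $\het(\q)$ or obtain Cohen--Macaulayness. If you are already willing to cite \cite[Lemma 1.4]{CFF} (as you in fact cite its Theorem 2.4 and Lemma 1.4 elsewhere), the paper's shortcut is the economical choice.
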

\begin{proof}
(1) The second statement follows from the first one and the fact that the localization functor $\mo(R)\rightarrow\mo(R_\p)$ is dense for each $\p\in\Spec(R)$. For the first statement, let $n=\operatorname{n}_R(M)+1$. It follows that $\ca^n_R(M)=\ca_R(M)$. By Lemma \ref{basic} (5), for each $\p\in\Spec(R)$, we have
$$\ca^n_{R_\p}(M_\p)=\ca_R^n(M)_\p=\ca_R(M)_\p=\ca_{R_\p}(M_\p).$$
It follows form this that $\operatorname{n}_R(M)\geq \sup\{\operatorname{n}_{R_\p}(M_\p)\mid \p \in \Spec(R)\}
\geq\sup\{\operatorname{n}_{R_\m}(M_\m)\mid \m \in \Max(R)\}$. Assume $m=\sup\{\operatorname{n}_{R_\m}(M_\m)\mid \m \in \Max(R)\}+1$.  Again by Lemma \ref{basic} (5), we conclude that the localization of the inclusion $\ca^m_R(M)\subseteq \ca_R(M)$ at each maximal ideal of $R$ is equal, and hence $\ca^m_R(M)=\ca_R(M)$. It follows from this that $\operatorname{n}_R(M)\leq m=\sup\{\operatorname{n}_{R_\m}(M_\m)\mid \m \in \Max(R)\}$.

 (2)  Let $\p \in \Spec(R)$, and let $J$ be an ideal of $R_\p$ generated by a maximal regular sequence of $R_\p$. 
Since $J$ has finite projective dimension over $R_\p$, we obtain the first equality below
\[
\operatorname{n}_{R_\p}(R_\p/J)=\pd_{R_\p}(R_\p/J)=\depth(R_\p)
\]
where the second equality is by our choice of $J$. 
Combining with (1), we conclude that
$
\operatorname n_R(\mo(R)) \geq \sup \{ \depth (R_\p) \mid \p \in \Spec(R) \} \geq \dim(R) - 1,
$
where the second inequality is by \cite[Lemma 1.4]{CFF}.
\end{proof}

\section{(Strong) generation of singularity categories}\label{Section singularity cat}
In this section, we investigate the (strong) generation of the singularity category. The main result of this section is Theorem \ref{T1} from the introduction; see Theorems \ref{main} and \ref{main-second}.
\begin{chunk}\label{def of ann of sin cat}
    Let $X$ be a complex in the singularity category $\D_{\sg}(R)$. The \emph{annihilator} of $X$ over $\D_{\sg}(R)$, denoted $\ann_{\D_{\sg}(R)}(X)$, is defined to be the annihilator of $\Hom_{\D_{\sg}(R)}(X,X)$ over $R$. That is, 
    $$
\ann_{\D_{\sg}(R)}(X)\colonequals \{r\in R\mid r\cdot \Hom_{\D_{\sg}(R)}(X,X)=0\}.
$$
The annihilator of $\D_{\sg}(R)$ is defined to be 
$$
\ann_R\D_{\sg}(R)\colonequals \bigcap_{X\in \D_{\sg}(R)}\ann_{\D_{\sg}(R)}(X);
$$
see \cite{Liu} for more details about the annihilator of the singularity category.
\end{chunk}
\begin{proposition}\label{compare}
     Let $M$ be a finitely generated $R$-module. Then
    $$
    \ca_R(M)=\ann_{\D_{\sg}(R)}(M).
    $$
\end{proposition}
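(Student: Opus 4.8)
The plan is to show that both ideals coincide with the set $\{r\in R : \text{the morphism }M\xrightarrow{\,r\,}M\text{ is zero in }\D_{\sg}(R)\}$, where $M\xrightarrow{\,r\,}M$ denotes multiplication by $r$. That $\ann_{\D_{\sg}(R)}(M)$ equals this set is formal: $\subseteq$ follows by evaluating at $\mathrm{id}_M$, and $\supseteq$ because $r\cdot\varphi=(r\cdot\mathrm{id}_M)\circ\varphi$ for every $\varphi\in\Hom_{\D_{\sg}(R)}(M,M)$, scalar multiplication being central. Next I would invoke the standard fact that a morphism of a triangulated category $\T$ becomes zero in a Verdier quotient $\T/\mathcal C$ (with $\mathcal C$ thick) precisely when it factors through an object of $\mathcal C$; applied to $\D_{\sg}(R)=\D^f(R)/\thick_{\D(R)}(R)$, and using from \ref{def of sin cat} that every perfect complex is isomorphic in $\D(R)$ to a bounded complex of finitely generated projective $R$-modules, this says that $r\in\ann_{\D_{\sg}(R)}(M)$ if and only if there is a factorization $M\xrightarrow{a}C\xrightarrow{b}M$ in $\D^f(R)$ with $ba=r\cdot\mathrm{id}_M$ and $C$ such a bounded complex of projectives.

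To prove $\ann_{\D_{\sg}(R)}(M)\subseteq\ca_R(M)$, I would take such a factorization and apply $\Hom_{\D(R)}(-,N[i])$ for an arbitrary $N\in\mo(R)$ and $i\ge 0$. Since $\Hom_{\D(R)}(M,N[i])=\Ext^i_R(M,N)$ and the map induced by $M\xrightarrow{\,r\,}M$ is multiplication by $r$, multiplication by $r$ on $\Ext^i_R(M,N)$ factors through $\Hom_{\D(R)}(C,N[i])$, which is the $i$-th cohomology of the complex $\Hom_R(C,N)$. As $C$ is a bounded complex of projectives and $N$ is concentrated in degree $0$, this complex is concentrated in degrees bounded above by an integer $i_0$ depending only on $C$; hence $\Hom_{\D(R)}(C,N[i])=0$ for every $i>i_0$ and every $N\in\mo(R)$. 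Therefore $r\cdot\Ext^i_R(M,N)=0$ for all $i\ge i_0+1$ and all $N\in\mo(R)$, i.e.\ $r\in\ca_R^{\,i_0+1}(M)\subseteq\ca_R(M)$.

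For the reverse inclusion $\ca_R(M)\subseteq\ann_{\D_{\sg}(R)}(M)$, let $r\in\ca_R(M)$. Since the $\ca_R^n(M)$ form an ascending chain with union $\ca_R(M)$, there is $n\ge 1$ with $r\in\ca_R^n(M)$, so Lemma \ref{basic}(1) gives $r\cdot\underline{\End}_R(\Omega^{n-1}_R(M))=0$; in particular $\Omega^{n-1}_R(M)\xrightarrow{\,r\,}\Omega^{n-1}_R(M)$ factors through a projective module, hence through a perfect complex, hence is zero in $\D_{\sg}(R)$. On the other hand, each short exact sequence $0\to\Omega^j_R(M)\to P\to\Omega^{j-1}_R(M)\to 0$ with $P$ projective yields a triangle in $\D^f(R)$ with perfect middle term, hence an isomorphism $\Omega^{j-1}_R(M)\cong\Omega^j_R(M)[1]$ in $\D_{\sg}(R)$; iterating, $M\cong\Omega^{n-1}_R(M)[n-1]$ in $\D_{\sg}(R)$. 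Because $r\cdot\mathrm{id}$ is a natural transformation of the identity functor, this isomorphism carries $M\xrightarrow{\,r\,}M$ onto a shift of $\Omega^{n-1}_R(M)\xrightarrow{\,r\,}\Omega^{n-1}_R(M)$, so $M\xrightarrow{\,r\,}M$ is also zero in $\D_{\sg}(R)$; thus $r\in\ann_{\D_{\sg}(R)}(M)$.

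There is no deep obstacle here; the step demanding the most care is the first inclusion, and specifically the observation that the degree $i_0$ beyond which $\Hom_{\D(R)}(C,N[i])$ vanishes can be chosen independently of the test module $N$ — a direct consequence of $C$ being a \emph{bounded} complex of projectives. This uniformity is precisely what lands $r$ in a single ideal $\ca_R^{\,i_0+1}(M)$, and hence in $\ca_R(M)$, rather than merely in an intersection of $N$-dependent annihilators; it is also what makes the argument for $\ca_R(M)\subseteq\ann_{\D_{\sg}(R)}(M)$ turn on the concrete identification of syzygies with shifts inside $\D_{\sg}(R)$.
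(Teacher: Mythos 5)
Your proof is correct and follows essentially the same path as the paper's: the inclusion $\ca_R(M)\subseteq\ann_{\D_{\sg}(R)}(M)$ is obtained from the identity $\ca_R^n(M)=\ca_R^1(\Omega^{n-1}_R(M))=\ann_R\underline{\End}_R(\Omega^{n-1}_R(M))$ together with the isomorphism $M\cong\Omega^{n-1}_R(M)[n-1]$ in $\D_{\sg}(R)$, and the reverse inclusion from the factorization through a perfect complex plus the uniform vanishing of $\Hom_{\D(R)}(C,N[i])$ for $i$ large. The only difference is that you spell out why the vanishing degree can be chosen independently of $N$ (boundedness of $C$), whereas the paper asserts this without comment; that is a welcome clarification but not a different method.
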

\begin{proof}
First, we prove the $\ca_R(M)\subseteq \ann_{\D_{\sg}(R)}(M)$. Let $r\in \ca_R^1(M)$. Note that $\ca_R^1(M)=\ann_R\underline{\End}_R(M)$; see Lemma \ref{basic}. Thus, the multiplication map $r\colon M\rightarrow M$ factors through a projective $R$-module. In particular, the multiplication map $r\colon M\rightarrow M$ in the singularity category $\D_{\sg}(R)$ is zero. This yields that $r\in \ann_{\D_{\sg}(R)}(M)$, and hence $\ca_R^1(M)\subseteq \ann_{\D_{\sg}(R)}(M)$.
For each $n>1$,
\begin{align*}
    \ca_R^n(M) =\ca_R^1(\Omega^{n-1}_R(M))\subseteq \ann_{\D_{\sg}(R)}(\Omega^{n-1}_R(M)),
\end{align*}
where the equality is by Lemma \ref{basic}, and the inclusion follows from the argument above. By the isomorphism $\Omega^{n-1}_R(M)[n-1]\cong M$, we have $$\ann_{\D_{\sg}(R)}(\Omega^{n-1}_R(M))=\ann_{\D_{\sg}(R)}(M).$$ Thus, $\ca_R^n(M)\subseteq \ann_{\D_{\sg}(R)}(M)$ for all $n>1$, and hence $\ca_R(M)\subseteq \ann_{\D_{\sg}(R)}(M).$

Now, we prove the converse direction. Assume $a\in \ann_{\D_{\sg}(R)}(M)$. That is, the multiplication map $a\colon M\rightarrow M$ in $\D_{\sg}(R)$ is zero. This implies the multiplication map $a\colon M\rightarrow M$ in $\D^f(R)$ factors through a perfect complex $P$. That is, in $\D^f(R)$, there is a commutative diagram
$$
\xymatrix{
M\ar[rr]^-a\ar[rd]&& M\\
& P\ar[ru]& 
}.
$$
Since $P$ is perfect, there exists $n>0$ such that 
$$
\Hom_{\D^f(R)}(P,N[i])=0
$$
for all $i\geq n$ and $N\in \mo(R)$. Applying $\Hom_{\D^f(R)}(-,N[i])$ on the above commutative diagram, we get that the multiplication map
$$
a\colon \Hom_{\D^f(R)}(M,N[i])\rightarrow \Hom_{\D^f(R)}(M,N[i])
$$
is zero for all $i\geq n$ and $N\in \mo(R)$. Combining with the isomorphism $$\Ext^i_R(M,N)\cong \Hom_{\D^f(R)}(M,N[i]),$$ we conclude that $a\cdot \Ext^i_R(M,N)=0$ for all $i\geq n$ and $N\in \mo(R)$. That is, $a\in \ca_R^n(M)$, and hence $a\in \ca_R(M)$. It follows that $\ann_{\D_{\sg}(R)}(M)\subseteq \ca_R(M)$. 
\end{proof}
\begin{corollary}\label{annihilator of sin}
     Let $R$ be a commutative Noetherian ring. Then
     $$
     \ann_R\D_{\sg}(R)=\ca_R(\mo(R)).
     $$
\end{corollary}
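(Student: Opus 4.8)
The proof will be a short formal argument that reduces everything to Proposition \ref{compare}. Unwinding the definitions in \ref{def of ann of sin cat} and \ref{def ca of modules}, we have
$$
\ann_R\D_{\sg}(R)=\bigcap_{X\in\D_{\sg}(R)}\ann_{\D_{\sg}(R)}(X),\qquad
\ca_R(\mo(R))=\bigcap_{M\in\mo(R)}\ca_R(M)=\bigcap_{M\in\mo(R)}\ann_{\D_{\sg}(R)}(M),
$$
where the last equality is Proposition \ref{compare}. Since every finitely generated $R$-module is (the image of) an object of $\D_{\sg}(R)$, intersecting over the larger index set immediately gives $\ann_R\D_{\sg}(R)\subseteq\ca_R(\mo(R))$.

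For the reverse inclusion, the key step is the (well-known) observation that every object $X$ of $\D_{\sg}(R)$ is isomorphic in $\D_{\sg}(R)$ to $M[j]$ for some $M\in\mo(R)$ and some $j\in\Z$. I would prove this by choosing a bounded-above resolution $P\xra{\simeq}X$ by finitely generated projective $R$-modules --- possible since $X$ has finitely generated, hence bounded, cohomology --- and, for $m$ sufficiently negative that $\h^i(X)=0$ for all $i\le m$, considering the short exact sequence of complexes $0\to\sigma_{\le m-1}P\to P\to\sigma_{\ge m}P\to 0$ coming from the brutal truncations. Here $\sigma_{\ge m}P$ is a bounded complex of finitely generated projectives, hence perfect, while $\sigma_{\le m-1}P$ has cohomology concentrated in a single degree and is therefore quasi-isomorphic to a shift $M[j]$ of a finitely generated (syzygy) module $M$. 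The associated triangle in $\D^f(R)$ then identifies $X$ with $M[j]$ in the Verdier quotient $\D_{\sg}(R)$.

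Granting this, for an arbitrary $X\in\D_{\sg}(R)$ we write $X\cong M[j]$, so $\Hom_{\D_{\sg}(R)}(X,X)\cong\Hom_{\D_{\sg}(R)}(M,M)$ as $R$-modules and hence $\ann_{\D_{\sg}(R)}(X)=\ann_{\D_{\sg}(R)}(M)\supseteq\ca_R(\mo(R))$. Taking the intersection over all $X$ gives $\ca_R(\mo(R))\subseteq\ann_R\D_{\sg}(R)$, which together with the first inclusion yields the claimed equality. The only nonformal ingredient is the syzygy reduction of the second paragraph --- I expect it to be the main (if routine and classical) point of the proof --- while everything else is just bookkeeping with the definitions and Proposition \ref{compare}.
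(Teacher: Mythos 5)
Your proof is correct and takes essentially the same route as the paper: both reduce to Proposition \ref{compare} by showing, via brutal truncation of a bounded-above projective resolution, that every object of $\D_{\sg}(R)$ is isomorphic in the singularity category to a shift of a finitely generated module, and then unwinding both sides as intersections of $\ann_{\D_{\sg}(R)}(-)$ over modules. The only (cosmetic) difference is that you split the equality into two inclusions, whereas the paper establishes the equality in one chain of equalities.
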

\begin{proof}
    Let $X$ be a complex in $\D_{\sg}(R)$. By choosing a projective resolution of $X$, we may assume $X$ is a bounded
above complex of finitely generated projective $R$-modules with finitely many nonzero
cohomologies. Then by taking brutal truncation, we conclude that $X[n]$ is isomorphic
to a finitely generated $R$-module in $\D_{\sg}(R)$ for $n\ll 0$. Assume $X[n]\cong M$ for some $n\ll 0$ and $M\in \mo(R)$. Note that $\ann_{\D_{\sg}(R)}(X)=\ann_{\D_{\sg}(R)}(M)$. This yields the second equality below:
\begin{align*}
    \ann_R\D_{\sg}(R)&=\bigcap_{X\in \D_{\sg}(R)}\ann_{\D_{\sg}(R)}(X)\\
    &=\bigcap_{M\in \mo(R)}\ann_{\D_{\sg}(R)}(M)\\
    &=\bigcap_{M\in \mo(R)}\ca_R(M),
\end{align*}
where the third equality follows from Proposition \ref{compare}.
\end{proof}
Let $\Sing(R)$ denote the \emph{singular locus} of $R$. That is, $\Sing(R)\colonequals \{\p\in \Spec(R)\mid R_\p \text{ is not regular}\}$.
\begin{lemma}\label{V(ca)=Sing(R)}
   For each finitely generated $R$-module $M$, $V(\ca_R(M))\subseteq \Sing(R).
$   The equality holds if, in addition, $M$ is a generator of $\D_{\sg}(R)$.
\end{lemma}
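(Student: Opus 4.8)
The plan is to reduce both assertions to a local statement via Lemma \ref{basic}~(5), which gives $\ca_R(M)_\p=\ca_{R_\p}(M_\p)$, together with the elementary fact recorded in \ref{def ca of modules} that for a local ring $(S,\n)$ and $N\in\mo(S)$ one has $\n\in V(\ca_S(N))\iff \ca_S(N)\neq S\iff \pd_S(N)=\infty$. So in all cases the point $\p$ is in $V(\ca_R(M))$ precisely when $\pd_{R_\p}(M_\p)=\infty$; equivalently, one may simply quote Proposition \ref{relation}, which identifies $V(\ca_R(M))=\IPD(M)$.

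For the inclusion $V(\ca_R(M))\subseteq\Sing(R)$: if $\p\in V(\ca_R(M))$, then by the above $\pd_{R_\p}(M_\p)=\infty$. Were $R_\p$ regular, it would have finite global dimension (see \ref{def regular}), forcing $\pd_{R_\p}(M_\p)\le\gldim(R_\p)<\infty$, a contradiction. Hence $R_\p$ is not regular, i.e.\ $\p\in\Sing(R)$. This direction needs no hypothesis on $M$.

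For the reverse inclusion under the assumption that $M$ is a generator of $\D_{\sg}(R)$: fix $\p\in\Sing(R)$; I want to show $\pd_{R_\p}(M_\p)=\infty$. First I would check that $M_\p$ is a generator of $\D_{\sg}(R_\p)$. The exact functor $-\otimes_R R_\p\colon\D^f(R)\to\D^f(R_\p)$ sends $R$ to $R_\p$ and hence perfect complexes to perfect complexes, so it descends to an exact functor $\D_{\sg}(R)\to\D_{\sg}(R_\p)$. This induced functor is dense: by the brutal-truncation argument used in the proof of Corollary \ref{annihilator of sin}, every object of $\D_{\sg}(R_\p)$ is, up to shift, isomorphic to a finitely generated $R_\p$-module, and every finitely generated $R_\p$-module is the localization of some finitely generated $R$-module. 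Since an exact functor carries $\thick_{\D_{\sg}(R)}(M)$ into $\thick_{\D_{\sg}(R_\p)}(M_\p)$, and $\thick_{\D_{\sg}(R)}(M)=\D_{\sg}(R)$, density forces $\thick_{\D_{\sg}(R_\p)}(M_\p)=\D_{\sg}(R_\p)$. Now $R_\p$ is not regular, so $\D_{\sg}(R_\p)$ is nontrivial (see \ref{def of sin cat}); therefore $M_\p\not\cong 0$ in $\D_{\sg}(R_\p)$, which means $\pd_{R_\p}(M_\p)=\infty$. By Lemma \ref{basic}~(5) this yields $\ca_R(M)_\p=\ca_{R_\p}(M_\p)\neq R_\p$, i.e.\ $\ca_R(M)\subseteq\p$, so $\p\in V(\ca_R(M))$. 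Combined with the first inclusion, $V(\ca_R(M))=\Sing(R)$.

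The step I expect to be the main obstacle is the bookkeeping around the localization functor on singularity categories—namely verifying that it is well defined and exact, and that density lets it transport a generator to a generator. Everything else is a direct application of Lemma \ref{basic}, the characterization of $\ca_R(M)=R$ in \ref{def ca of modules}, and the facts that regular local rings have finite global dimension and that $\D_{\sg}(S)$ is trivial exactly when $S$ is regular.
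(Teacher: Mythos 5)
Your proof is correct and reaches the same conclusion, but by a genuinely more self-contained route for the equality.

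For the inclusion $V(\ca_R(M))\subseteq\Sing(R)$, both you and the paper reduce via Proposition \ref{relation} to $V(\ca_R(M))=\IPD(M)$ and observe that a prime with $\pd_{R_\p}(M_\p)=\infty$ cannot have $R_\p$ regular; this part matches.

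For the reverse inclusion, the paper simply cites \cite[Lemma~2.6]{IT2016}, which asserts $\IPD(M)=\Sing(R)$ when $M$ generates $\D_{\sg}(R)$. You instead reprove that fact from scratch: you set up the triangulated localization functor $\D_{\sg}(R)\to\D_{\sg}(R_\p)$ (well defined because $R\mapsto R_\p$ preserves perfection), verify its density via the brutal-truncation-plus-module-localization argument, conclude that $M_\p$ generates $\D_{\sg}(R_\p)$, and then use nontriviality of $\D_{\sg}(R_\p)$ at a singular prime $\p$ to force $M_\p\not\cong 0$ in the singularity category, i.e.\ $\pd_{R_\p}(M_\p)=\infty$. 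This is a valid and complete argument. What you gain is independence from the cited lemma and an explicit mechanism (localization of singularity categories) that fits naturally with material elsewhere in the paper such as Corollary \ref{annihilator of sin} and Remark \ref{difference}~(2); what you pay is the extra bookkeeping around the localization functor, which the paper avoids by delegating to \cite{IT2016}. One small remark: your appeal to Lemma \ref{basic}~(5) at the end is not strictly necessary once you have Proposition \ref{relation} — $\pd_{R_\p}(M_\p)=\infty$ already places $\p$ in $\IPD(M)=V(\ca_R(M))$ directly.
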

\begin{proof}
  It follows from Proposition \ref{relation} that $V(\ca_R(M))=\IPD(M)$. It is clear that $$\IPD(M)\subseteq \Sing(R).$$   By \cite[Lemma 2.6]{IT2016}, the equality holds if $M$ is a generator of $\D_{\sg}(R)$.
\end{proof}

\begin{chunk}\label{equi def of strong gen-abel}
    For each $X\in \mo(R)$, Dao and Takahashi \cite[Defintion 5.1]{DT2014} introduced a construction of an ascending chain of full subcategories built out of $X$ as follows: 

Set $|X|_0^R=\{0\}$, and denote $|X|_1^R$ to be the full subcategory of $\mo(R)$ consisting of modules which are direct summands of any finite copies of $X$. Inductively, for $n\geq 2$, denote $|X|_n^R$ to be the full subcategory of $\mo(R)$ consisting of modules $Y$ that appear in a short exact sequence
$$
0\rightarrow Y_1\rightarrow Y\oplus Y^\prime \rightarrow Y_2\rightarrow 0,
$$
where $Y_1\in |X|_{n-1}^R$ and $Y_2\in |X|_1^R$. If there is no confusion, we will use $|X|_n$ to denote $|X|_n^R$.

In \cite[Corollary 4.6]{IT2016}, Iyengar and Takahashi observed that $\mo(R)$ has a strong generator if and only if there exist $s,t\geq 0$ and $G\in \mo(R)$ such that 
$$
\Omega^s_R(\mo(R))\subseteq |G|_t.
$$
\end{chunk}

\begin{chunk}\label{R/m^i}
    Let $(R,\m)$ be a commutative Noetherian local ring. Assume there exist $s,t\geq 0$ and $G\in\mo(R)$ such that $\Omega^s_R(R/\m^i)\subseteq [G]_t^R$ for all $i\geq 0$; see the definition of $[G]_t^R$ (i.e., the \emph{ball of radius} $t$ \emph{centered at} $G$) in \cite[Definition 2.1]{DT2014}. With the same argument in the proof of \cite[Proposition 3.2]{ST2021}, one has $\dim(R)\leq s$.
\end{chunk}
The following proposition characterizes the Krull dimension of $R$ in terms of strong generation of $\mo(R)$.
\begin{proposition}\label{Krull dim finite}
Let $R$ be a commutative Noetherian ring. 
Then:
\[
\dim(R)\leq\inf\{s\geq 0\mid \text{there exist } t\geq 0,~G\in\mo(R) \text{ such that }\Omega^s_R(\mo(R))\subseteq |G|_t^R\}.
\]
In particular, if $\mo(R)$ has a strong generator, then $\dim(R)$ is finite.
In addition, if $R$ is quasi-excellent and either admits a dualizing complex or is a local ring, then the equality holds.
\end{proposition}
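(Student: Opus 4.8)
The plan is to treat the three assertions separately: the displayed inequality and the ``in particular'' clause by a localization argument that reduces everything to the local statement \ref{R/m^i}, and the equality under the extra hypotheses by invoking the sharp local bound of Iyengar--Takahashi and Kimura together with a globalization step.

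For the inequality, suppose $s\geq 0$ is such that $\Omega^s_R(\mo(R))\subseteq|G|^R_t$ for some $t\geq 0$ and $G\in\mo(R)$; I want $\dim(R)\leq s$. Since $\dim(R)=\sup\{\dim(R_\p)\mid\p\in\Spec(R)\}$, it is enough to bound each $\dim(R_\p)$. Fix $\p$. For every $i\geq 0$ we have $\Omega^s_R(R/\p^i)\subseteq|G|^R_t\subseteq[G]^R_t$; localizing at $\p$, and using that a projective resolution of $R/\p^i$ over $R$ localizes to one of $R_\p/\p^iR_\p$ over $R_\p$ (so that $\Omega^s_{R_\p}(R_\p/\p^iR_\p)$ is, up to free summands, the localization of $\Omega^s_R(R/\p^i)$) together with the fact that the ball $[G_\p]^{R_\p}_t$ is closed under direct summands, I obtain $\Omega^s_{R_\p}(R_\p/\p^iR_\p)\subseteq[G_\p]^{R_\p}_t$ for all $i\geq 0$. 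Now \ref{R/m^i} applies to the local ring $R_\p$ and gives $\dim(R_\p)\leq s$; taking the supremum over $\p$ yields $\dim(R)\leq s$. As $s$ ranges over all admissible values this gives $\dim(R)\leq\inf\{s\geq 0\mid\exists\,t\geq 0,\ \exists\,G\in\mo(R),\ \Omega^s_R(\mo(R))\subseteq|G|^R_t\}$. The ``in particular'' clause then follows at once from \ref{equi def of strong gen-abel}: $\mo(R)$ has a strong generator exactly when the index set on the right-hand side is nonempty, and in that case its infimum is a natural number bounding $\dim(R)$.

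For the equality, by the inequality just proved it remains to prove $\inf\{s\geq 0\mid\cdots\}\leq\dim(R)$; when $\dim(R)=\infty$ this is automatic, so assume $d:=\dim(R)<\infty$ and let me produce $t\geq 0$ and $G\in\mo(R)$ with $\Omega^d_R(\mo(R))\subseteq|G|^R_t$. When $R$ is quasi-excellent local this is precisely the content of \cite[Theorem 5.2]{IT2016} refined by \cite[Corollary 2.6]{Kimura2024}, which I would cite directly. When $R$ is quasi-excellent and admits a dualizing complex, $R$ has finite Krull dimension and $\Spec(R)$ is a Noetherian space; a dualizing complex localizes to one at every prime, so each $R_\p$ is covered by the local case, and I would patch the finitely many local generators along an affine open cover of $\Spec(R)$ --- replacing them by their direct sum together with $R$ and bounding the number of extension steps --- to produce a single pair $(G,t)$ valid globally with $s=d$.

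The step I expect to be the main obstacle is this globalization in the equality direction: turning the sharp \emph{pointwise} bounds $\Omega^{\dim R_\p}_{R_\p}(\mo(R_\p))\subseteq|G_\p|^{R_\p}_{t_\p}$ into a single global containment with syzygy index exactly $\dim(R)$ requires both a patching argument over $\Spec(R)$ and care that patching does not inflate the syzygy index past $\dim(R)$. A convenient device is Kawasaki's theorem, that a Noetherian ring with a dualizing complex is a homomorphic image of a Gorenstein ring of finite Krull dimension, which allows one to transfer strong generation (with controlled bounds) from that ambient Gorenstein ring; alternatively the dualizing-complex case may already be subsumed in the cited results of Iyengar--Takahashi and Kimura. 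The inequality direction, by contrast, is the localization argument sketched above and should be routine.
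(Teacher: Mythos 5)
Your proposal is correct and follows essentially the same route as the paper: the inequality is proved by localizing at each prime, transferring the containment into the ball $[G_\p]_t^{R_\p}$ (using that syzygies localize up to free summands and that balls are closed under summands), and invoking \ref{R/m^i}, while the equality is obtained by citing \cite[Theorem 5.2]{IT2016} together with \cite[Corollary 2.6]{Kimura2024}. The globalization step you flag as a potential obstacle is not needed: those two references already yield the global containment $\Omega^d_R(\mo(R))\subseteq |G|_t^R$ directly in both the local and the dualizing-complex cases, which is exactly how the paper concludes.
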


\begin{proof}
Let $s,t$ be nonnegative integers and $G$ be a finitely generated $R$-module. Assume $\Omega^s_R(\mo(R))\subseteq |G|_t^R$. 
Since $|G|_t^R\subseteq [G]_t^R$, $\Omega_{R}^{s}(\mo(R))\subseteq {\lbrack G\rbrack}_{t}^{R}$; c.f. \ref{R/m^i}.
Let $\p$ be a prime ideal of $R$. We claim that 
 $\Omega_{R_{\p}}^{s}(\mo (R_{\p}))\subseteq {\lbrack G_{\p}\rbrack}_{t}^{R_{\p}}.$
Combining with this, \ref{R/m^i} yields that $\dim(R_\p)\leq s$, and hence $\dim(R) \leq s$ as $\p$ is arbitrary.

Now, we proceed to prove the claim. 
Let $X$ be an object in $\Omega_{R_{\p}}^{s}(\mo(R_{\p}))$. Since the functor $(-)_{\p}\colon \mo(R)\to\mo(R_{\p})$ is dense, there exists $M\in\mo(R)$ such that $X\cong\Omega_{R_{\p}}^{s}(M_{\p})$. On the other hand, there exist $n,m\geq0$ such that $\Omega_{R_{\p}}^{s}(M_{\p})\oplus R_{\p}^{\oplus n}\cong \left(\Omega_{R}^{s}(M)\right)_{\p}\oplus R_{\p}^{\oplus m}$. Hence,  $$X\oplus R_{\p}^{\oplus n}\cong \left(\Omega_{R}^{s}(M)\right)_{\p} \oplus R_{\p}^{\oplus m}\cong \left(\Omega_{R}^{s}(M) \oplus R^{\oplus m}\right)_\p\in \left(\Omega_{R}^{s}(\mo(R))\right)_{\p}\subseteq {\lbrack G_{\p}\rbrack}_{t}^{R_{\p}};$$
the inclusion here uses
 $
 \left(\Omega_{R}^{s}(\mo(R))\right)_{\p}\subseteq ( [G]_{t}^{R})_{\p}\subseteq {\lbrack G_{\p}\rbrack}_{t}^{R_{\p}}.$ As ${\lbrack G_{\p}\rbrack}_{t}^{R_{\p}}$ is closed under direct summands, one has $X\in{\lbrack G_{\p}\rbrack}_{t}^{R_{\p}}$.


Suppose that $R$ is a quasi-excellent ring of finite Krull dimension $d$ and either admits a dualizing complex or is a local ring. 
Combining \cite[Theorem 5.2]{IT2016} and \cite[Corollary 2.6]{Kimura2024}, there exist $t\geq0$ and $G\in\mo(R)$ such that  $\Omega^d_R(\mo(R))\subseteq |G|_t^R$. This implies that the converse of the inequality stated in the proposition holds.
\end{proof}
\begin{remark}
Let $R$ be a commutative Noetherian ring, and let $\Max(R)$ denote the set of all maximal ideals of $R$. 
By a similar argument as in the previous proposition, the following three values coincide.
\begin{enumerate}
\item
The Krull dimension of $R$.
\item
\begin{equation*}
\inf \left\{ s \geq 0 \, \middle| 
\begin{array}{l}
\text{for every } \p \in \Spec(R), \text{ there exist } t \geq 0, ~G \in \mo(\widehat{R_{\p}}) \\
\text{such that } 
\Omega^s_{\widehat{R_{\p}}}(\mo(\widehat{R_{\p}})) \subseteq |G|_t^{\widehat{R_{\p}}}
\end{array}
\right\}.
\end{equation*}
\item
\begin{equation*}
\inf \left\{ s \geq 0 \, \middle| 
\begin{array}{l}
\text{for every } \m \in \Max(R), \text{ there exist } t \geq 0, ~G \in \mo(\widehat{R_{\m}}) \\
\text{such that } \Omega^s_{\widehat{R_{\m}}}(\mo(\widehat{R_{\m}})) \subseteq |G|_t^{\widehat{R_{\m}}}
\end{array}
\right\}.
\end{equation*}
\end{enumerate}
Additionally, in (2), the values obtained by replacing $\Omega^s_{\widehat{R_{\p}}}(\mo(\widehat{R_{\p}})) \subseteq |G|_t^{\widehat{R_{\p}}}$ with 
\begin{itemize}
\item
$\Omega^s_{\widehat{R_{\p}}}(\mo(\widehat{R_{\p}})) \subseteq \lbrack G\rbrack_t^{\widehat{R_{\p}}}$,
\item
$\Omega^s_{\widehat{R_{\p}}}(\widehat{R_{\p}}/\p^{i}\widehat{R_{\p}}) \in |G|_t^{\widehat{R_{\p}}} \text{ for all } i\gg0$, or 
\item
$\Omega^s_{\widehat{R_{\p}}}(\widehat{R_{\p}}/\p^{i}\widehat{R_{\p}}) \in \lbrack G\rbrack_t^{\widehat{R_{\p}}} \text{ for all } i\gg0$
\end{itemize}  
are all equal. The same holds for (3).
\end{remark}

The equality \(\Sing(R) = V(\CA(R))\) in (3) below was established by Iyengar and Takahashi \cite[Theorem 1.1]{IT2016} under the additional assumption that the finitistic global dimension is finite. Proposition \ref{Krull dim finite} shows this assumption is unnecessary.
\begin{proposition}\label{recover}
   Let $R$ be a commutative Noetherian ring. Then:
\begin{enumerate}
    \item If $\D_{\sg}(R)$ has a generator, then there exists $M\in \mo(R)$ such that $M$ is generator in $\D_{\sg}(R)$. Moreover, for any $M\in \mo(R)$ which is a generator in $\D_{\sg}(R)$,
$$
\Sing(R)=V(\ann_{\D_{\sg}(R)}(M))=V(\ca_R(M))=\IPD(M).
$$

\item If $\D_{\sg}(R)$ has a strong generator, then
$$
\Sing(R)=V(\ann_R{\D_{\sg}(R)})=V(\ca_R(\mo(R))).
$$

\item  If $\mo(R)$ has a strong generator, then
$$
\Sing(R)=V(\CA(R))=V(\ann_R{\D_{\sg}(R)})=V(\ca_R(\mo(R))).
$$

\item $\CA(R)\subseteq \ann_R\D_{\sg}(R)=\ca_R(\mo(R))$. If, in addition, $\mo(R)$ has a strong generator, then they are equal up to radical. 
\end{enumerate}
\end{proposition}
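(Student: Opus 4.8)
The plan is to prove the four parts in order, each reducing to the previous ones together with results already in hand.

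\emph{Part (1).} First I would record that every object of $\D_{\sg}(R)$ is, up to a shift, isomorphic to a finitely generated $R$-module: represent $X$ by a bounded-above complex of finitely generated projectives with bounded cohomology and apply a brutal truncation, exactly as in the proof of Corollary \ref{annihilator of sin}. Hence a generator $G$ of $\D_{\sg}(R)$ satisfies $G\cong M[n]$ for some $M\in\mo(R)$ and $n\in\Z$; since shifting is an autoequivalence, $\thick_{\D_{\sg}(R)}(M)=\thick_{\D_{\sg}(R)}(G)=\D_{\sg}(R)$, so $M$ is a module generator. For any such $M$ the displayed equalities are then assembled from three facts already available: $V(\ca_R(M))=\Sing(R)$ since $M$ generates $\D_{\sg}(R)$ (Lemma \ref{V(ca)=Sing(R)}), $\ca_R(M)=\ann_{\D_{\sg}(R)}(M)$ (Proposition \ref{compare}), and $V(\ca_R(M))=\IPD(M)$ (Proposition \ref{relation}).

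\emph{Part (2).} A strong generator of $\D_{\sg}(R)$ is in particular a generator, so by Part (1) there is a module $M$ with $\D_{\sg}(R)=\thick^{c}_{\D_{\sg}(R)}(M)$ for some $c\geq 0$ and with $\Sing(R)=V(\ca_R(M))$. Write $\mathfrak a=\ann_{\D_{\sg}(R)}(M)=\ca_R(M)$. Since $\ca_R(\mo(R))\subseteq\ca_R(M)$ and $\ann_R\D_{\sg}(R)=\ca_R(\mo(R))$ by Corollary \ref{annihilator of sin}, we get $\Sing(R)=V(\mathfrak a)\subseteq V(\ann_R\D_{\sg}(R))=V(\ca_R(\mo(R)))$. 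For the reverse inclusion I would run a ghost argument: for $r\in\mathfrak a$ and any object $Z$, the endomorphism $r\cdot\mathrm{id}_Z$ kills $\Hom_{\D_{\sg}(R)}(M[i],Z)$ for every $i\in\Z$ — because $r\cdot\mathrm{id}_{M[i]}=0$, the identity $\ann_{\D_{\sg}(R)}(M[i])=\ann_{\D_{\sg}(R)}(M)=\mathfrak a$ holding since $[i]$ is an autoequivalence — so $r\cdot\mathrm{id}_Z$ is a ghost for the sphere $\{M[i]\}_{i\in\Z}$; composing $c$ such maps and invoking the ghost lemma over $\thick^{c}_{\D_{\sg}(R)}(M)=\D_{\sg}(R)$ (cf.\ \cite{Rouquier}) yields $\mathfrak a^{c}\subseteq\ann_R\D_{\sg}(R)$. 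Hence $V(\ann_R\D_{\sg}(R))=V(\mathfrak a)=\Sing(R)$, which also gives $V(\ca_R(\mo(R)))=\Sing(R)$.

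\emph{Parts (3) and (4).} The unconditional assertion $\CA(R)\subseteq\ann_R\D_{\sg}(R)=\ca_R(\mo(R))$ of (4) is just Lemma \ref{test regular}(1) together with Corollary \ref{annihilator of sin}. Now assume $\mo(R)$ has a strong generator. Then $\D^f(R)$ has a strong generator by \ref{strong gen regular finite dim}, and hence so does $\D_{\sg}(R)$ (the Verdier quotient functor $\D^f(R)\to\D_{\sg}(R)$ is triangulated and essentially surjective, so it carries the bounded-step thickening of $\D^f(R)$ to one of $\D_{\sg}(R)$); thus Part (2) applies and gives $\Sing(R)=V(\ann_R\D_{\sg}(R))=V(\ca_R(\mo(R)))$. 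It remains to insert $V(\CA(R))$ into this chain, equivalently (given the first line of (4)) to show $\sqrt{\CA(R)}=\sqrt{\ca_R(\mo(R))}$; one containment is $\CA(R)\subseteq\ca_R(\mo(R))$. For the other, fix $s,t\geq 0$ and $G\in\mo(R)$ with $\Omega^{s}_R(\mo(R))\subseteq|G|_t$ (see \ref{equi def of strong gen-abel}). The key computation is: if $r\in\ca^{b}_R(G)$ then $r^{t}\in\ca^{b}_R(X)$ for every $X\in|G|_t$. This is an induction on $t$ along the defining short exact sequences $0\to Y_1\to Y\oplus Y'\to Y_2\to 0$ of $|G|_t$, using that $\Ext^{\geq b}_R(\,\cdot\,,-)$ of a direct summand is a summand of that of the whole, and that if $a_1,a_2$ annihilate $\Ext^{\geq b}_R(\,\cdot\,,-)$ of the two outer terms of a short exact sequence, then $a_1a_2$ annihilates $\Ext^{\geq b}_R(\,\cdot\,,-)$ of the middle term. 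Taking $X=\Omega^{s}_R(M)$ and applying Lemma \ref{basic}(4) turns this into $r^{t}\in\ca^{b+s}_R(M)$ for every $M\in\mo(R)$, i.e.\ $r^{t}\in\CA^{b+s}(R)\subseteq\CA(R)$. Applying this to an arbitrary $r\in\ca_R(\mo(R))\subseteq\ca_R(G)=\ca^{b}_R(G)$ (for $b\gg0$) gives $\sqrt{\ca_R(\mo(R))}\subseteq\sqrt{\CA(R)}$, which finishes both the $V(\CA(R))$ clause of (3) and the ``equal up to radical'' clause of (4).

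\emph{Main obstacle.} The real work in (2) and (3) is uniformity: the definitions furnish, for each $r\in\ca_R(\mo(R))$ and each $M$, only an exponent $n_M$ depending on $M$ with $r\cdot\Ext^{\geq n_M}_R(M,-)=0$, whereas $V(\ann_R\D_{\sg}(R))=\Sing(R)$ and $V(\CA(R))=\Sing(R)$ demand a single ideal — a fixed power of $\mathfrak a$, respectively of $\ca_R(G)$, sitting inside a fixed $\CA^{N}(R)$ — that detects regularity at every prime simultaneously. Converting the bounded number of building steps supplied by the strong-generation hypothesis into this uniform exponent (through the ghost lemma in (2), and through the inductive $\Ext$-annihilation along the filtration $|G|_t$ in (3)–(4)) is the crux of the argument.
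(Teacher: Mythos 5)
Your proposal is correct. Parts (1) and (2) follow the paper's route essentially verbatim: the brutal-truncation reduction to a module generator, then Lemma \ref{V(ca)=Sing(R)}, Proposition \ref{compare}, Proposition \ref{relation}, and Corollary \ref{annihilator of sin}; your ghost-lemma argument giving $\mathfrak a^{c}\subseteq\ann_R\D_{\sg}(R)$ is exactly the content of the result the paper cites at that point (\cite[Lemma 2.1]{Esentepe}), so you are merely inlining the proof of the cited lemma. Where you genuinely diverge is in establishing $V(\CA(R))=\Sing(R)$ in (3) and the radical equality in (4): the paper gets these by combining Proposition \ref{Krull dim finite} with \cite[Theorem 1.1]{IT2016} (deducing the radical equality in (4) from the equality of closed sets in (3)), whereas you prove $\sqrt{\CA(R)}=\sqrt{\ca_R(\mo(R))}$ directly by propagating a uniform annihilation exponent along the filtration $|G|_t$ --- if $r\in\ca^{b}_R(G)$ then $r^{t}$ kills $\Ext^{\geq b}_R(X,-)$ for all $X\in|G|_t$, hence $r^{t}\in\CA^{b+s}(R)$ --- and then read off (3) from (2). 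This reverses the paper's logical order ((4) implies (3) rather than (3) implies (4)) and buys a self-contained argument that does not invoke the external Iyengar--Takahashi theorem; the cost is reproving a filtration lemma that is standard and implicit in that citation. All the individual steps check out: the two-out-of-three annihilation along short exact sequences, the summand reduction, the shift $\ca^{b}_R(\Omega^{s}_R(M))=\ca^{b+s}_R(M)$ from Lemma \ref{basic}(4) (valid since you take $b\gg0$), and the observation that the module $M\cong G[-n]$ inherits the generation level $c$ because $\thick^{1}$ is closed under shifts.
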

\begin{proof}

(1) Assume $G$ is a generator of $\D_{\sg}(R)$. With the same argument in the proof of Corollary \ref{annihilator of sin}, there exists $n\in \Z$ such that $G[n]\cong M$ in $\D_{\sg}(R)$ for some $M\in \mo(R)$. Then $M$ is also a generator of $\D_{\sg}(R)$. Combining with Lemma \ref{V(ca)=Sing(R)} and Proposition \ref{compare},
$$
\Sing(R)=V(\ca_R(M))=V(\ann_{\D_{\sg}(R)}(M)).
$$
The desired result follows as $V(\ca_R(M))=\IPD(M)$; see Proposition \ref{relation}.

(2) By Corollary \ref{annihilator of sin}, $\ann_R\D_{\sg}(R)=\ca_R(\mo(R))$. It remains to prove the first equality. Assume $\D_{\sg}(R)=\thick^s_{\D_{\sg}(R)}(G)$ for some $G\in \D_{\sg}(R)$ and $s\geq 0$. By definition, $\ann_R\D_{\sg}(R)\subseteq \ann_{\D_{\sg}(R)}(G)$. Combining with the assumption, it follows from \cite[Lemma 2.1]{Esentepe} that $$(\ann_{\D_{\sg}(R)}(G))^s\subseteq \ann_R\D_{\sg}(R).$$ 
Thus, $\ann_R\D_{\sg}(R)$ is equal to $\ann_{\D_{\sg}(R)}(G)$ up to radical. This yields that 
$$
V(\ann_R\D_{\sg}(R))=V(\ann_{\D_{\sg}(R)}(G)).
$$
The desired result now follows immediately from (1); note that $G[n]\cong M$ in $\D_{\sg}(R)$ for some $M\in \mo(R)$ and $n\in \Z$.

(3) The strong generation of $\mo(R)$ implies the strong generation of $\D_{\sg}(R)$; see \ref{strong gen regular finite dim}. Combining with (2), it remains to show the first equality, and this follows immediately by combining Proposition \ref{Krull dim finite} and \cite[Theorem 1.1]{IT2016}.

(4) The first statement follows from Corollaries \ref{test regular} and \ref{annihilator of sin}. The second statement follows from (3).
\end{proof}
\begin{remark}\label{difference}
(1) The equality $\Sing(R)=\IPD(M)$ in Proposition \ref{recover} (1) was due to Iyengar and Takahashi \cite[Lemma 2.9]{IT2019}. The new input in (1) is the defining ideal of $\Sing(R)$ via the annihilator over the singularity category. 

(2)  The first equality of Proposition \ref{recover} (2) was established by the second author in \cite[Theorem 4.6]{Liu} through the localization of the singularity category. In contrast, our proof takes a different approach by employing the cohomological annihilators of modules and their connection to the annihilators of the singularity category. The inclusion $\CA(R)\subseteq \ann_R\D_{\sg}(R)$ was observed in \cite[Proposition 1.2]{Liu}.

(3) Assume $\D_{\sg}(R)$ has a strong generator. In \cite[Corollary 5.8]{BSTT2022},  Bahlekeh, Salarian, Takahashi, and Toosi observed that
    $$
    \dim (R/\ca_R(\mo(R)))\leq \dim \Sing(R).
    $$
    Indeed, this inequality is an equality by Proposition \ref{recover} (2).

\end{remark}

\begin{corollary}\label{nonzero}
    Let $R$ be a commutative Noetherian domain. If $\D_{\sg}(R)$ has a strong generator, then $\ann_R\D_{\sg}(R)\neq 0$. 
\end{corollary}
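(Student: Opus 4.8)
The statement is an immediate consequence of Proposition \ref{recover} (2), and the plan is to argue by contradiction. Suppose $\ann_R\D_{\sg}(R)=0$. Since $R$ is a domain it is reduced, so the only ideal of $R$ whose vanishing locus is all of $\Spec(R)$ is the zero ideal; hence $V(\ann_R\D_{\sg}(R))=V(0)=\Spec(R)$.

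Next I would invoke Proposition \ref{recover} (2): because $\D_{\sg}(R)$ has a strong generator by hypothesis, $\Sing(R)=V(\ann_R\D_{\sg}(R))$, and by the previous step this equals $\Spec(R)$. In particular the zero ideal $(0)$, which is a prime ideal of the domain $R$, lies in $\Sing(R)$; that is, the local ring $R_{(0)}$ is not regular.

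Finally I would note that $R_{(0)}$ is the field of fractions of $R$, hence a field, hence regular (see \ref{def regular}). This contradicts the previous paragraph, and the contradiction forces $\ann_R\D_{\sg}(R)\neq 0$.

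I expect no real obstacle here: granting Proposition \ref{recover} (2) and Corollary \ref{annihilator of sin}, the corollary is essentially the single observation that a domain has a generic point, which is automatically non-singular, so that $\Sing(R)$ is a proper closed subset of $\Spec(R)$ and its defining ideal $\ann_R\D_{\sg}(R)$ cannot be zero. Alternatively, one can reach the same conclusion through Lemma \ref{V(ca)=Sing(R)} and Proposition \ref{relation}, replacing the strong generator by a module $M\in\mo(R)$ that generates $\D_{\sg}(R)$ and using $V(\ann_R\D_{\sg}(R))=V(\ca_R(M))=\IPD(M)$, which omits the generic point of the domain $R$; but the route via Proposition \ref{recover} (2) is the most economical.
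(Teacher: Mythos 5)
Your proof is correct and follows exactly the paper's own argument: invoke Proposition \ref{recover} (2) to identify $\Sing(R)=V(\ann_R\D_{\sg}(R))$, then observe that $\ann_R\D_{\sg}(R)=0$ would force the zero ideal of the domain $R$ into $\Sing(R)$, contradicting that $R_{(0)}$ is a field. (Your remark that $R$ is reduced is not actually needed: $V(0)=\Spec(R)$ holds for any ring once you've assumed the annihilator is zero.)
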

\begin{proof}
    By Proposition \ref{recover}, $\ann_{R}\D_{\sg}(R)$ defines the singular locus of $R$. It follows from this that $\ann_{R}\D_{\sg}(R)\neq 0$. If not, the zero ideal of $R$, denoted $(0)$, is in $\Sing(R)$. This contradicts that $R_{(0)}$ is a field. Hence, $\ann_{R}\D_{\sg}(R)\neq 0$.
\end{proof}

\begin{remark}
(1) Corollary \ref{nonzero} is an analog of a result by Elagin and Lunts \cite[Theorem 5]{Elagin-Lunts:2018} concerning the derived category. Specifically, for a commutative Noetherian domain $R$,  they observed that $\CA(R)\neq 0$ if $\D^f(R)$ has a strong generator. 

 In general, if $\D^f(R)$ has a strong generator, then $\D_{\sg}(R)$ also possesses one. However, the converse is not true, even when $R$ is a domain; for instance, consider a regular domain with infinite Krull dimension constructed by Nagata \cite[Appendix, Example 1]{Nagata} (see \ref{strong gen regular finite dim}). Additionally, $\CA(R)\subseteq \ann_R\D_{\sg}(R)$ (see Proposition \ref{recover}), and the inclusion can be proper, as illustrated by Nagata's example (see Lemma \ref{test regular} and Corollary \ref{annihilator of sin}). Therefore, there seems to be no implication between Corollary \ref{nonzero} and the aforementioned result of Elagin and Lunts.

 (2) There exists a one-dimensional Noetherian domain with $\ann_R\D_{\sg}(R)=0$; see Example \ref{ann sin-cat zero}.
\end{remark}

\begin{example}\label{quasi-excellent}
    Let $R$ be a finitely generated algebra over a field or an equicharacteristic excellent local ring. Iyengar and Takahashi \cite[Theorem 1.3]{IT2016} established that $\mo(R)$ has a strong generator. 
Recently, this result was extended by the first author, Lank, and Takahashi \cite[Corollary 3.12]{DLT}, who demonstrated that $\mo(R)$ has a strong generator for any quasi-excellent ring with finite Krull dimension.
\end{example}

\begin{corollary}\label{equal up to radical}
  Let $R$ be a quasi-excellent ring with finite Krull dimension. Then
  $$
  \sqrt{\CA(R)}=\sqrt{\ann_R\D_{\sg}(R)}=\sqrt{\ca_R(\mo(R))}.
  $$
\end{corollary}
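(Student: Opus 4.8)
The plan is to reduce the statement to results already in hand, so that almost nothing new needs to be proved. The key observation is that the hypotheses place us squarely in the setting where $\mo(R)$ has a strong generator: by Example \ref{quasi-excellent} (the theorem of the first author, Lank, and Takahashi, \cite[Corollary 3.12]{DLT}), every quasi-excellent ring of finite Krull dimension satisfies this. Thus the whole corollary should follow by feeding this input into the machinery of Section \ref{Section singularity cat}.

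Concretely, first I would record the identification $\ann_R\D_{\sg}(R)=\ca_R(\mo(R))$, which is Corollary \ref{annihilator of sin} and holds with no extra hypotheses; this already settles one of the two equalities on the nose (before taking radicals). For the remaining equality $\sqrt{\CA(R)}=\sqrt{\ann_R\D_{\sg}(R)}$, I would invoke Proposition \ref{recover}: since $\mo(R)$ has a strong generator, part (3) gives $\Sing(R)=V(\CA(R))=V(\ann_R\D_{\sg}(R))=V(\ca_R(\mo(R)))$, and since for ideals $I,J$ one has $V(I)=V(J)$ if and only if $\sqrt I=\sqrt J$, this yields exactly the claimed chain $\sqrt{\CA(R)}=\sqrt{\ann_R\D_{\sg}(R)}=\sqrt{\ca_R(\mo(R))}$. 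Alternatively, the second assertion of Proposition \ref{recover} (4) states the same thing directly. Stringing these facts together completes the argument.

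I do not expect any real obstacle here: the corollary is a packaging of Example \ref{quasi-excellent}, Corollary \ref{annihilator of sin}, and Proposition \ref{recover}. The only minor point worth a word is the degenerate case: if $R$ is regular of finite Krull dimension then $\CA(R)=\ann_R\D_{\sg}(R)=\ca_R(\mo(R))=R$ (by Lemma \ref{test regular} and \ref{strong gen regular finite dim}), so all three radicals equal $R$ and the statement is trivially true; but this case is already absorbed into the general argument via $\Sing(R)=\varnothing$, so no separate treatment is needed.
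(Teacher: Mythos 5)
Your proof is correct and takes essentially the same route as the paper: the paper's proof is the one-liner ``This follows immediately from Proposition \ref{recover} and Example \ref{quasi-excellent},'' and you have unpacked exactly that, routing through $\ann_R\D_{\sg}(R)=\ca_R(\mo(R))$ (Corollary \ref{annihilator of sin}) and the equality of vanishing loci in Proposition \ref{recover} (3)/(4).
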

\begin{proof}
    This follows immediately from Proposition \ref{recover} and Example \ref{quasi-excellent}.
\end{proof}

\begin{remark}
     Assume $R$ is a localization of a finitely generated algebra over a field or an equicharacteristic excellent local ring. Then $R$ is excellent with finite Krull dimension, and hence Corollary \ref{equal up to radical} implies that $\CA(R)$ is equal to $\ann_R\D_{\sg}(R)$ up to radical; in this case, this was proved in \cite[Proposition 1.2]{Liu}.

\end{remark}

\begin{chunk}\label{up to syzygy}
Inspired by the observation of Iyengar and Takahashi in \ref{equi def of strong gen-abel}, we say that $\mo(R)$ has a \emph{point-wise strong generator} provided that there exist $G\in \mo(R)$ and $t\geq 0$ such that, for each $M\in \mo(R)$, $\Omega_{R}^{s}M\in \left|G\right|_{t}$ for some $s\geq 0$.
\end{chunk}

We say that $R$ has \emph{isolated singularities} if $\Sing (R)\subseteq \Max (R)$, where $\Max(R)$ consists of all maximal ideals of $R$.

\begin{theorem}\label{main}
Let $R$ be a commutative Noetherian ring with isolated singularities. Then the following are equivalent.

\begin{enumerate}
\item
 $\D_{\sg}(R)$ has a strong generator.
\item
 $R/\ann_{R}\D_{\sg}(R)$ is either $0$ or Artinian. 
\item
$R/\ca_{R}(\mo(R))$ is either $0$ or Artinian. 

\item
$\mo(R)$ has a point-wise strong generator.

\end{enumerate}
\end{theorem}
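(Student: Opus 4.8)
The plan is to prove the cycle $(1)\Rightarrow(2)\Leftrightarrow(3)\Rightarrow(1)$ together with $(1)\Rightarrow(4)\Rightarrow(1)$. The equivalence $(2)\Leftrightarrow(3)$ is immediate from Corollary~\ref{annihilator of sin}, which gives $\ann_R\D_{\sg}(R)=\ca_R(\mo(R))$. For $(1)\Rightarrow(2)$: if $\D_{\sg}(R)$ has a strong generator, then $\Sing(R)=V(\ann_R\D_{\sg}(R))$ by Proposition~\ref{recover}(2); since $R$ has isolated singularities this set lies in $\Max(R)$, so the Noetherian ring $R/\ann_R\D_{\sg}(R)$ has all its primes maximal and is therefore $0$ or Artinian.

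The core of the proof is $(3)\Rightarrow(1)$, which I would deduce from the Koszul complex on generators of the annihilator. Put $I=\ca_R(\mo(R))=\ann_R\D_{\sg}(R)$. If $I=R$ then $R$ is regular and $\D_{\sg}(R)=0$, so assume $R/I$ is Artinian with $V(I)=\{\mathfrak m_1,\dots,\mathfrak m_n\}$, fix generators $I=(x_1,\dots,x_r)$, and set $c=(r+1)\,\ell\ell(R/I)$. Let $M\in\mo(R)$ be arbitrary. By Proposition~\ref{compare}, $I\subseteq\ca_R(M)=\ann_{\D_{\sg}(R)}(M)$, so each $x_l$ kills $\Hom_{\D_{\sg}(R)}(M,M)$, hence acts as zero on $M$ and therefore on every finite direct sum of shifts of $M$ in $\D_{\sg}(R)$. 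Form the perfect complex $K=K(x_1,\dots,x_r;R)$ and consider $M\ot_R K\in\D^f(R)$. Expanding $K=K(x_1;R)\otimes_R\cdots\otimes_R K(x_r;R)$ one factor at a time, each partial tensor product of $M$ is, in $\D_{\sg}(R)$, the cone of multiplication by some $x_l$ on a finite direct sum of shifts of $M$; as that map is zero the cone splits, and inductively $M\ot_R K\cong\bigoplus_{j=0}^{r}M[j]^{\oplus\binom rj}$ in $\D_{\sg}(R)$. In particular $M$ is a direct summand of $M\ot_R K$ in $\D_{\sg}(R)$. On the other hand the cohomology of $M\ot_R K$ consists of the Koszul homology modules $\h_i(x_1,\dots,x_r;M)$, of which there are at most $r+1$ and each of which is finitely generated and annihilated by $I$, hence of finite length over $R$. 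A finite-length $R$-module lies in $\thick^{\ell\ell(R/I)}_{\D_{\sg}(R)}\!\big(\bigoplus_{i=1}^n R/\mathfrak m_i\big)$ via its radical filtration, so building $M\ot_R K$ out of its cohomology shows $M\ot_R K\in\thick^{c}_{\D_{\sg}(R)}\!\big(\bigoplus_{i=1}^n R/\mathfrak m_i\big)$, and hence so is its summand $M$. Since every object of $\D_{\sg}(R)$ is isomorphic to a shift of a finitely generated $R$-module (see the proof of Corollary~\ref{annihilator of sin}), we conclude $\D_{\sg}(R)=\thick^{c}_{\D_{\sg}(R)}\!\big(\bigoplus_{i=1}^n R/\mathfrak m_i\big)$, so $\bigoplus_{i=1}^n R/\mathfrak m_i$ is a strong generator of $\D_{\sg}(R)$.

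The implication $(4)\Rightarrow(1)$ is formal: a short exact sequence in $\mo(R)$ becomes an exact triangle in $\D_{\sg}(R)$ and $\Omega^s_R(M)\cong M[-s]$ there, so a point-wise strong generator $G$ of radius $t$ satisfies $M\in\thick^t_{\D_{\sg}(R)}(G)$ for all $M\in\mo(R)$, hence for all objects of $\D_{\sg}(R)$, and $G$ is a strong generator. For $(1)\Rightarrow(4)$ I would observe that $(1)$ implies $(3)$, so by the computation above every $M$ lies in $\thick^{c}_{\D_{\sg}(R)}\!\big(\bigoplus_i R/\mathfrak m_i\big)$ with $c$ fixed; this membership must then be transported back to $\mo(R)$. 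Here one uses the standard comparison between $\D_{\sg}(R)$ and $\mo(R)$ — every exact triangle between modules is realized, after passing to syzygies, by a short exact sequence — together with the geometric input that $\Sing(R)\subseteq\{\mathfrak m_1,\dots,\mathfrak m_n\}$, so that a sufficiently high syzygy of any module is locally free off these finitely many maximal ideals; combining these in the manner of \cite{AIT} and of the proof of Theorem~\ref{new theorem} yields $\Omega^s_R(M)\in|G|_t$ for a single finitely generated module $G$ and a fixed $t$, with $s$ allowed to depend on $M$. That is condition $(4)$.

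I expect $(1)\Rightarrow(4)$ to be the main obstacle. The derived-category statement $(1)$ drops out cleanly from $(3)$ via the Koszul argument — the point being that $M\ot_R K$ simultaneously splits off $M$ in $\D_{\sg}(R)$ (because the generators of $\ann_R\D_{\sg}(R)$ kill $\Hom_{\D_{\sg}(R)}(M,M)$) and has finite-length cohomology (because $R/\ann_R\D_{\sg}(R)$ is Artinian), so it is assembled from the residue fields $R/\mathfrak m_i$ in a number of steps controlled only by $r$ and $\ell\ell(R/I)$. The delicate part is the converse passage to $\mo(R)$: one has to realize membership in $\thick^{c}_{\D_{\sg}(R)}\!\big(\bigoplus_i R/\mathfrak m_i\big)$ by a module-level construction whose generator $G$ and radius $t$ do not depend on $M$ — only the syzygy index is allowed to grow with $M$ — which is precisely why the notion of a point-wise (rather than a uniform) strong generator is the right one, and where the finiteness of the singular locus and the local freeness of high syzygies have to enter.
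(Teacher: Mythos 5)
Your treatment of $(1)\Rightarrow(2)$ and $(2)\Leftrightarrow(3)$ matches the paper, and your direct Koszul-complex argument for $(3)\Rightarrow(1)$ is correct and is a genuinely different route: the paper goes $(3)\Rightarrow(4)\Rightarrow(1)$, whereas you bypass $(4)$ entirely by noting that in $\D_{\sg}(R)$ the generators $x_l$ of $I=\ca_R(\mo(R))$ kill every endomorphism of $M$, so $M\ot_R K(\xx;R)$ splits into a sum of shifts of $M$ while simultaneously having finite-length cohomology (since $R/I$ is Artinian), yielding $M\in\thick^{(r+1)\ell\ell(R/I)}_{\D_{\sg}(R)}(\bigoplus_i R/\m_i)$. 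This is a clean, self-contained derived-category proof of $(3)\Rightarrow(1)$ that avoids the module-level machinery the paper uses. Your direct verification of $(4)\Rightarrow(1)$ is also fine.

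The gap is $(1)\Rightarrow(4)$ (equivalently $(3)\Rightarrow(4)$), which you sketch but do not prove. Your plan is to transport the derived membership $M\in\thick^c_{\D_{\sg}(R)}(\bigoplus_i R/\m_i)$ (uniform $c$) back to a module-level containment $\Omega^s_R(M)\in|G|_t$ with $t$ uniform and $s$ depending on $M$. But the tools you invoke for the transport --- the proof of Theorem~\ref{new theorem} and the arguments of \cite{AIT} --- produce an \emph{extension generator}, that is, $\Omega^s_R(M)\in|G|_t$ with $s$ uniform and $t$ depending on $M$. That is the opposite quantifier pattern from a point-wise strong generator, so those citations do not give what you need. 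Moreover, transporting $\thick^c_{\D_{\sg}(R)}$-membership back to $\mo(R)$ with a uniform radius is precisely the nontrivial direction; going forward (from $|G|_t$ to $\thick^t_{\D_{\sg}(R)}$) is formal, as in your $(4)\Rightarrow(1)$, but the converse requires a concrete module-level construction. The paper supplies this via \cite[Theorem 3.1(1)]{BHST}: since $I\subseteq\ca^1_R(\Omega^i_R(M))=\ann_R\Ext^1_R(\Omega^i_R(M),\Omega^{i+1}_R(M))$ for some $i$ depending on $M$, and $I=(x_1,\dots,x_n)$, one gets $\Omega^i_R(M)\in|\bigoplus_{j=1}^n\Omega^j_R(L)|^R_{n+1}$ for some $L\in\mo(R/I)$ --- a module-level Koszul splitting with the uniform radius $n+1$ built in. One then unwinds $L\in|N|^{R/I}_{\ell\ell(R/I)}$ and applies \cite[Lemma 3.8]{DLT} to land $\Omega^i_R(M)$ in a ball of uniform radius $(n+1)\ell\ell(R/I)$ around the fixed module $\bigoplus_{j=1}^n\Omega^j_R(N)$. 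Without this (or an equivalent module-level realization of the Koszul splitting with controlled radius), your cycle does not close: $(1)\Leftrightarrow(2)\Leftrightarrow(3)$ and $(4)\Rightarrow(1)$ are established, but nothing in the proposal actually puts $(4)$ into the equivalence.
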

\begin{proof}
$(1)\Rightarrow(2)$: By Proposition \ref{recover}, one has $\Sing (R)=V(\ann_{R}\D_{\sg}(R))$. Since $R$ has isolated singularities, $\Sing(R)$ consists of maximal ideals.
Hence, $V(\ann_{R}\D_{\sg}(R))$ consists of maximal ideals. This implies $(2)$.

$(2)\Rightarrow (3)$: This follows from Corollary \ref{annihilator of sin}.

$(3)\Rightarrow(4)$:
If $R/\ca_R(\mo(R))=0$, then $R$ is regular; see Lemma \ref{test regular}. Thus, for each $M\in\mo(R)$, $\Omega^i_R(M)$ is finitely generated projective for some $i\geq 0$, and hence $\Omega^i_R(M)\in|R|_1$. This implies that $\mo(R)$ has a point-wise strong generator.  Next, we assume that $R/\ca(\mo(R))$ is Artinian. 

Set $I\colonequals\ca_R(\mo(R))$ and assume $\x=x_1,\ldots,x_n$ is a generating set of $I$. Let $M$ be a finitely generated $R$-module. 
Then
$$I=\ca_{R}(\mo(R))\subseteq\ca_{R}(M)=\ca_{R}^{i+1}(M)=\ca_{R}^{1}(\Omega_{R}^{i}(M))$$
for some $i\geq  0$, where the second equality is by Lemma \ref{basic}. This implies that
$$
I\subseteq \ann_R\Ext^1_R(\Omega^i_R(M),\Omega^{i+1}_R(M)).
$$
Combining with this, \cite[Theorem 3.1 (1)]{BHST} shows that there exists $L\in\mo(R/I)$ such that
$
\Omega^i_R(M)\in |\displaystyle\bigoplus_{j=1}^n\Omega_R^j(L)|^R_{n+1}.
$
Since $R/I$ is Artinian, $L\in |N|^{R/I}_{\ell\ell(R/I)}$, where $\ell\ell(R/I)$ is the Loewy length of $R/I$ and $N$ is the quotient of $R/I$ by its Jacobson radical.
Restriction scalars along the map $R\rightarrow R/I$, one has $L\in |N|^R_{\ell\ell(R/I)}$, and hence $\Omega^j_R(L)\subseteq |\Omega^j_R(N)|^R_{\ell\ell(R/I)}$ for each $j\geq 0$. It follows from this that $\displaystyle\bigoplus_{j=1}^n\Omega^j_R(L)\in |\displaystyle\bigoplus_{j=1}^n\Omega^j_R(N)|^R_{\ell\ell(R/I)}$. By \cite[Lemma 3.8]{DLT}, $\Omega^i_R(M)\in |\displaystyle\bigoplus_{j=1}^n\Omega^j_R(N)|^R_{(n+1)\ell\ell(R/I)}$. Hence, $\displaystyle\bigoplus_{j=1}^n\Omega^j_R(N)$ is a point-wise strong generator of $\mo(R)$. 


$(4)\Rightarrow (1)$: This can be deduced by \cite[Proposition 5.3]{BSTT2022}.
\end{proof}

\begin{remark}\label{compare with BHST}
(1) The implication $(2)\Rightarrow (1)$ of the above theorem follows from the third author’s work \cite[Corollary 4.3]{Mifune}.

(2) Let $R$ be a commutative Noetherian ring with isolated singularities, if $\D_{\sg}(R)$ has a strong generator and $\operatorname{n}(\mo(R))<\infty$, then $\mo(R)$ has a strong generator. Indeed, this follows from the same argument as in the proof of Theorem \ref{main} $(3)\Rightarrow (4)$.  

Combining the above with Proposition \ref{relation CA(R) and ca(R-mo)}, if $R$ is a Gorenstein ring with isolated singularities and $R$ is of finite Krull dimension, then the following are equivalent.
\begin{itemize}
   \item $\D_{\sg}(R)$ has a strong generator. 
    \item $R/\CA(R)$ is either $0$ or Artinian.
    \item $\mo(R)$ has a strong generator.
\end{itemize}
The local case of this was established by Bahlekeh, Hakimian, Salarian, and Takahashi \cite[Theorem 3.2]{BHST}.


\end{remark}


\begin{corollary}\label{new class}
    Let $R$ be a one-dimensional Cohen--Macaulay ring. Assume that, for each maximal ideal $\m$ of $R$ of height $0$, the local ring $R_\m$ has minimal multiplicity. Additionally, assume that, for each maximal ideal $\m$ of height $1$, $R_\m$ is almost Gorenstein.
     If $\D_\sg(R)$ has a strong generator, then we have:

\begin{enumerate}
    \item $\Sing(R)=V(\CA(R))$.

    \item If, in addition, $R$ has isolated singularities, then $\mo(R)$ has a strong generator. 
\end{enumerate}
\end{corollary}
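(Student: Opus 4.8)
The plan is to reduce the whole statement to the single claim that $\operatorname{n}_R(\mo(R))$ is finite; I will in fact bound it by $1$. Granting this, part (1) is immediate: since $\D_{\sg}(R)$ has a strong generator, Proposition \ref{recover} (2) gives $\Sing(R)=V(\ca_R(\mo(R)))$, and since $\operatorname{n}_R(\mo(R))<\infty$, Remark \ref{n(mo(R))} (2) gives $\ca_R(\mo(R))=\CA(R)$, whence $\Sing(R)=V(\CA(R))$. Part (2) then follows by feeding the facts ``$R$ has isolated singularities'', ``$\D_{\sg}(R)$ has a strong generator'' and ``$\operatorname{n}_R(\mo(R))<\infty$'' into Remark \ref{compare with BHST} (2), which yields that $\mo(R)$ has a strong generator.

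To bound $\operatorname{n}_R(\mo(R))$ I use the local--global formula $\operatorname{n}_R(\mo(R))=\sup\{\operatorname{n}_{R_\m}(\mo(R_\m))\mid\m\in\Max(R)\}$ of Proposition \ref{local-global-n(mo(R))} (1), and treat each maximal ideal $\m$ by cases. If $R_\m$ is regular, then $\operatorname{n}_{R_\m}(M_\m)=\pd_{R_\m}(M_\m)\le\dim R_\m\le 1$ for every $M\in\mo(R)$ by Remark \ref{n(mo(R))} (2). If $\m$ has height $0$, then $R_\m$ is Artinian and minimal multiplicity forces the square of its maximal ideal to vanish (for an Artinian local ring $(A,\n)$ one has $\operatorname{length}(A)=1+\operatorname{length}(\n)\ge 1+\mu(\n)$, with equality exactly when $\n^2=0$), so $\operatorname{n}_{R_\m}(\mo(R_\m))\le 1$ by Remark \ref{n(mo(R))} (3). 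If $\m$ has height $1$ and $R_\m$ is singular, then $R_\m$ is a one-dimensional Cohen--Macaulay local ring with minimal multiplicity which, being nearly Gorenstein, admits a canonical module $\omega$; applying Lemma \ref{ca stablize} (2) with $d=1$ then gives $\ca^2_{R_\m}(M_\m)=\ca_{R_\m}(M_\m)$ for all $M$, i.e. $\operatorname{n}_{R_\m}(\mo(R_\m))\le 1$ --- provided the hypothesis of that lemma holds, namely that $\OCM^\times(R_\m)$ is closed under $\Hom_{R_\m}(-,\omega)$. Combining the three cases gives $\operatorname{n}_R(\mo(R))\le 1$.

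Thus everything comes down to verifying, for a one-dimensional Cohen--Macaulay local ring $(S,\n)$ with minimal multiplicity that is nearly Gorenstein, that $\OCM^\times(S)$ is closed under $\Hom_S(-,\omega)$; this is the step I expect to be the main obstacle. The approach I would take: first record the dictionary that, for a maximal Cohen--Macaulay $S$-module $L$, one has $L\in\Omega^1_S(\CM(S))$ if and only if $\Hom_S(L,\omega)$ is a homomorphic image of some $\omega^{\oplus s}$ (equivalently, the trace submodule $\sum_{\varphi\colon\omega\to\Hom_S(L,\omega)}\varphi(\omega)$ equals $\Hom_S(L,\omega)$); this follows by applying $\Hom_S(-,\omega)$ to the two relevant short exact sequences, using that in dimension one the kernel of any surjection of maximal Cohen--Macaulay modules is again maximal Cohen--Macaulay and that maximal Cohen--Macaulay modules are reflexive with respect to $\Hom_S(-,\omega)$. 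Given $N\in\OCM^\times(S)$, applying the dictionary to $L=N$ shows $\Hom_S(N,\omega)$ is $\omega$-generated, while applying it to $L=\Hom_S(N,\omega)$ reduces the desired conclusion to showing that $N=\Hom_S(\Hom_S(N,\omega),\omega)$ is itself $\omega$-generated. Here both hypotheses enter: minimal multiplicity forces $N$, being a first syzygy of a maximal Cohen--Macaulay module, to be an Ulrich module, $\n N=xN$ for a minimal reduction $x$ of $\n$; and nearly Gorenstein gives $\sum_{\varphi\colon\omega\to N}\varphi(\omega)\supseteq\tr(\omega)\cdot N\supseteq\n N$, while the relations $\Hom_S(\omega,N)\neq 0$ and $\n N=xN$ (with $x$ a nonzerodivisor on $N$) prevent every map $\omega\to N$ from factoring through $\n N$. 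Pinning down this final generation step --- making the bound on the trace submodule sharp enough to conclude it is all of $N$, and isolating why $\tr(\omega)\supseteq\n$ (rather than mere minimal multiplicity) is what is needed --- is the heart of the argument; I would either extract it from the results of \cite{K-T} on Ulrich modules over rings of minimal multiplicity or prove the required trace identity directly from the nearly Gorenstein condition.
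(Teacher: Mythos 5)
Your proposal is correct and follows essentially the same route as the paper: reduce everything to $\operatorname{n}_R(\mo(R))\le 1$ via the local--global principle of Proposition \ref{local-global-n(mo(R))}, handle height-$0$ maximal ideals by $\m^2R_\m=0$ and Remark \ref{n(mo(R))} (3), handle height-$1$ maximal ideals by Lemma \ref{ca stablize} (2), and then conclude (1) from Proposition \ref{recover} and Remark \ref{n(mo(R))} (2) and (2) from Remark \ref{compare with BHST}. The one step you flag as the main obstacle --- that for a one-dimensional Cohen--Macaulay local ring with minimal multiplicity which is nearly Gorenstein, $\OCM^\times$ is closed under $\Hom(-,\omega)$ --- is exactly what the paper disposes of by citing the equivalence $(3)\Leftrightarrow(9)$ of \cite[Theorem 5.9]{K-T}, so your instinct to extract it from that reference (rather than reprove the Ulrich/trace argument) is precisely what the authors do.
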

\begin{proof}
   (1) We claim that $\operatorname{n}_R(\mo(R))<\infty$. By Proposition \ref{local-global-n(mo(R))}, this is equivalent to showing that $\sup\{\operatorname{n}_{R_\m}(\mo(R_\m)\mid \m\in\Max(R)\}<\infty$. If $\m$ is a maximal ideal of height $0$, the by hypothesis $\m^2R_\m=0$, and hence $\operatorname{n}_{R_\m}(\mo(R_\m)) \leq 1$ by Remark \ref{n(mo(R))} (3). If $\m$ is a maximal ideal height $1$, then, if we can show $\syz \CM^{\times}(R_\m)$ is closed under canonical dual, we can conclude by Lemma \ref{ca stablize}  that $\operatorname{n}_{R_\m}(\mo (R_\m)) \leq 1$. So we now verify that for any one-dimensional almost Gorenstein local ring $R$, $\syz \CM^{\times}(R)$ is closed under canonical dual. Indeed, if $R$ is almost Gorenstein, then $\m^{\dagger}\in \ul_{\omega}(R)$ by \cite[Proposition 7.2, Lemma 4.15]{dms}. Notice that for any $M\in \syz \CM^{\times}(R)$, $M\cong \Hom_R(M^*, R)=\Hom_R(M^*,\m)\cong \Hom_R(\m^{\dagger}, (M^*)^{\dagger})$, where the last module belongs to $\ul_{\omega}(R)$ by \cite[Lemma 4.15]{dms}. Thus, by \cite[Corollary 4.27]{dms} we get $\Hom_R(M,\omega)\cong M^*\in \syz \CM^{\times}(R)$ as we wanted.    Therefore, $\operatorname{n}_R(\mo(R))\leq 1$. Combining this with Remark \ref{n(mo(R))} (2), we have $\CA(R)=\ca_R(\mo(R))$. Since $\D_{\sg}(R)$ has a strong generator, it follows from Proposition \ref{recover} that $\Sing(R)=V(\CA(R))$.

   (2) By the proof of (1), $\operatorname{n}_R(\mo(R))<\infty$. Remark \ref{compare with BHST} now shows that $\mo(R)$ has a strong generator. 
\end{proof}
\begin{remark}\label{relation-3.2-3.3}
   For a Gorenstein local ring $R$, Bahlekeh, Hakimian, Salarian, and Takahashi \cite[Theroem 3.3]{BHST} observed that if $\D_{\sg}(R)$ has a strong generator, then $\Sing(R)=V(\CA(R))$. Thus, Corollary \ref{new class} provides a new class of rings satisfying the equality $\Sing(R)=V(\CA(R))$. Moreover, the rings satisfying Corollary \ref{new class} (2) form a new class of rings for which all the conditions of \cite[Theorem 3.2]{BHST} are equivalent.
\end{remark}
The following result is a direct consequence of Theorem \ref{main}.
\begin{corollary}\label{main application}
 Let $R$ be a commutative Noetherian domain with $\dim R\leq 1$. Then the following are equivalent.
 \begin{enumerate}
     \item $\D_{\sg}(R)$ has a strong generator. 

     \item $\ann_R\D_{\sg}(R)\neq 0$.

\item $\ca_R(\mo(R)) \neq 0$.

     \item $\mo(R)$ has a point-wise strong generator.
 \end{enumerate}    
\end{corollary}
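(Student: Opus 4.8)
The plan is to obtain this corollary as an essentially formal consequence of Theorem \ref{main}, Corollary \ref{nonzero}, and Corollary \ref{annihilator of sin}. The first thing I would check is that such an $R$ automatically has isolated singularities: since $R$ is a domain, $R_{(0)}$ is the fraction field of $R$, so $(0)\notin\Sing(R)$; and since $\dim R\leq 1$, every nonzero prime of $R$ is maximal. Hence $\Sing(R)\subseteq\Max(R)$, so Theorem \ref{main} applies to $R$. That theorem already supplies the equivalences among (1), (4), the condition ``$R/\ann_R\D_{\sg}(R)$ is $0$ or Artinian'', and, via Corollary \ref{annihilator of sin} (which identifies $\ann_R\D_{\sg}(R)$ with $\ca_R(\mo(R))$), the condition ``$R/\ca_R(\mo(R))$ is $0$ or Artinian''.

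It then remains only to compare these ``$0$ or Artinian'' conditions with the nonvanishing conditions (2) and (3). Write $I\colonequals\ann_R\D_{\sg}(R)=\ca_R(\mo(R))$. For $(1)\Rightarrow(2)$ I would cite Corollary \ref{nonzero}, which says that a strong generator of $\D_{\sg}(R)$ forces $\ann_R\D_{\sg}(R)\neq 0$ for any Noetherian domain; and $(2)\Leftrightarrow(3)$ is Corollary \ref{annihilator of sin}. For $(3)\Rightarrow(1)$: if $I\neq 0$ then either $I=R$, so $R/I=0$, or $I$ is a nonzero proper ideal of the domain $R$, in which case every prime over $I$ is nonzero, hence maximal because $\dim R\leq 1$, so $R/I$ has Krull dimension $0$ and is therefore Artinian. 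In either case $R/\ca_R(\mo(R))$ is $0$ or Artinian, and Theorem \ref{main} then gives that $\D_{\sg}(R)$ has a strong generator. Together with $(1)\Leftrightarrow(4)$ from Theorem \ref{main}, this closes the cycle $(1)\Rightarrow(2)\Leftrightarrow(3)\Rightarrow(1)$ and attaches (4).

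I do not expect a substantive obstacle here: the statement is an assembly of already-established results, and the only new content is the elementary remark that a nonzero proper ideal of a Noetherian domain of Krull dimension at most $1$ has an Artinian quotient (together with the trivialities $I=R\Leftrightarrow R/I=0$). The single point that needs a little care is not to argue ``$R/I$ is $0$ or Artinian $\Rightarrow I\neq 0$'' directly -- this would fail for a general ideal when $R$ is a field -- but to route it through Theorem \ref{main} and Corollary \ref{nonzero}, where the nonvanishing $\ann_R\D_{\sg}(R)=R\neq 0$ in the field case is automatic.
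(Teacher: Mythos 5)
Your proposal is correct and follows exactly the route the paper intends: the paper presents this corollary as a direct consequence of Theorem \ref{main}, and your argument supplies precisely the needed details (that a domain of dimension at most one has isolated singularities, that a nonzero ideal of such a domain has a quotient that is $0$ or Artinian, and the use of Corollary \ref{nonzero} together with Corollary \ref{annihilator of sin} for the implications involving nonvanishing). Your care in routing $(1)\Rightarrow(2)$ through Corollary \ref{nonzero} rather than through the ``$0$ or Artinian'' condition is exactly the right way to handle the case where $R$ is a field.
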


\begin{corollary}\label{sin version}
     Let $R$ be a commutative Noetherian ring with $\dim R\leq 1$. Then the following are equivalent.
\begin{enumerate}
 \item $\D_{\sg}(R/\p)$ has a strong generator for each prime ideal $\p$ of $R$.
 
    \item $\ann_{R/\p}\D_{\sg}(R/\p)\neq 0$ for each prime ideal $\p$ of $R$.

\item $\ca_{R/\p}(\mo(R/\p))\neq 0$ for each prime ideal $\p$ of $R$.   
\end{enumerate}
\end{corollary}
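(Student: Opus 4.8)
The plan is to reduce everything to Corollary \ref{main application} applied to the quotient rings $R/\p$. The key preliminary observation is that for each prime ideal $\p$ of $R$, the ring $R/\p$ is again a commutative Noetherian domain, and $\dim(R/\p)\leq \dim(R)\leq 1$. Thus each $R/\p$ satisfies the hypotheses of Corollary \ref{main application}.

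First I would fix an arbitrary prime ideal $\p$ of $R$ and invoke Corollary \ref{main application} with the ring $R/\p$ in place of $R$. This yields that the three conditions
\[
\D_{\sg}(R/\p)\ \text{has a strong generator},\qquad \ann_{R/\p}\D_{\sg}(R/\p)\neq 0,\qquad \ca_{R/\p}(\mo(R/\p))\neq 0
\]
are pairwise equivalent (Corollary \ref{main application} in fact lists a fourth equivalent condition, the existence of a point-wise strong generator of $\mo(R/\p)$, but we only need these three here).

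Then I would simply quantify over all primes: condition (1) of the present corollary asserts the first bullet above holds for every $\p\in\Spec(R)$, condition (2) asserts the second holds for every $\p$, and condition (3) asserts the third holds for every $\p$. Since for each individual $\p$ the three statements are equivalent, the three ``for all $\p$'' statements are equivalent as well. This completes the argument.

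There is essentially no obstacle; the only point worth flagging is the routine verification that the domain and Krull-dimension hypotheses are inherited by $R/\p$, which is immediate, so the corollary is a direct consequence of Corollary \ref{main application}.
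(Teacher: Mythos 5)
Your proposal is correct and follows essentially the same route as the paper: fix $\p$, note $R/\p$ is a domain of Krull dimension at most one, apply Corollary \ref{main application}, and quantify over all primes. The paper's only cosmetic difference is that it cites Corollary \ref{annihilator of sin} directly for the equivalence of (2) and (3), which holds with no domain or dimension hypothesis, whereas you extract all three equivalences from Corollary \ref{main application}; both are valid.
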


\begin{proof}
For each prime ideal $\p$ of $R$, $R/\p$ is a domain and $\dim R/\p\leq 1$. Thus, the equivalence $(1)\iff  (2)$  follows from Corollary  \ref{main application}. The equivalence $(2)\iff (3)$ is a direct consequence of Corollary \ref{annihilator of sin}.
\end{proof}
\begin{remark}\label{Krull dimension assumption}
Due to Iyengar and Takahashi \cite[Theorem 1.1]{IT2019}, the following three conditions are equivalent for a commutative Noetherian ring.
\begin{enumerate}
    \item $\mo(R/\p)$ has a generator for each prime ideal $\p$ of $R$.   

    \item $\D^f(R/\p)$ has a generator for each prime ideal $\p$ of $R$. 

    \item $\D_{\sg}(R/\p)$ has a generator for each prime ideal $\p$ of $R$. 
\end{enumerate}
Moreover, if these conditions hold, then $\mo(R)$, $\D^f(R)$, and $\D_{\sg}(R)$ all have generators. 

Recently, the first author, Lank, and Takahashi \cite[Theorem 1.1]{DLT} showed that the following three conditions are equivalent for a commutative Noetherian ring.
\begin{enumerate}
    \item $\mo(R/\p)$ has a strong generator for each prime ideal $\p$ of $R$.   

    \item $\D^f(R/\p)$ has a strong generator for each prime ideal $\p$ of $R$. 

    \item $\CA(R/\p)\neq 0$ for each prime ideal $\p$ of $R$.
\end{enumerate}
Furthermore, if these conditions hold, then $\dim(R)<\infty$, and both $\mo(R)$ and $\D^f(R)$ have strong generators. 

A natural question arises: do Corollaries \ref{main application} and \ref{sin version} still hold if we remove the assumption about the Krull dimension? Theorem \ref{main-second} is a result related to this question. Keep the same assumption as Corollary \ref{sin version}, we don't know whether $\D_{\sg}(R)$ has a strong generator when the conditions of Corollary \ref{sin version} hold. 
\end{remark}

For a full subcategory $\mathcal C$ of $\mo(R)$, we define $\sqrt{\ca}_{R}(\mathcal C)\colonequals\displaystyle\bigcap_{X\in\mathcal C}\sqrt{\ca_{R}(X)}$.

\begin{lemma}\label{syzygy}
Let $R$ be a commutative Noetherian ring. Then:

(1) $\mathfrak{ca}(R)\subseteq\ca(\mo(R))\subseteq\sqrt{\ca}_{R}(\mo(R))$.

(2) $\Sing(R) \subseteq V(\sqrt{\ca}_{R}(\mo(R))) \subseteq V(\ca(\mo(R))) \subseteq V(\mathfrak{ca}(R))$.
\end{lemma}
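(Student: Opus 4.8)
The idea is that part (2) follows almost entirely from part (1) by applying the inclusion‑reversing operator $V(-)$, so the only genuinely new point is the leftmost containment $\Sing(R)\subseteq V(\sqrt{\ca}_{R}(\mo(R)))$; and part (1) is a combination of Lemma \ref{test regular}(1) with a trivial "pass to radicals" observation. So I would organize the proof as: (a) establish the two inclusions of (1); (b) deduce the three rightmost inclusions of (2) formally; (c) prove $\Sing(R)\subseteq V(\sqrt{\ca}_{R}(\mo(R)))$ by exhibiting, for each $\p\in\Sing(R)$, a single test module whose cohomological annihilator is contained in $\p$.

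\textbf{Part (1).} The inclusion $\mathfrak{ca}(R)\subseteq\ca_{R}(\mo(R))$ is precisely Lemma \ref{test regular}(1) (recall $\mathfrak{ca}(R)=\CA(R)$). For the second inclusion, note that for every $X\in\mo(R)$ one trivially has $\ca_{R}(X)\subseteq\sqrt{\ca_{R}(X)}$; intersecting over all $X\in\mo(R)$ gives $\ca_{R}(\mo(R))=\bigcap_{X}\ca_{R}(X)\subseteq\bigcap_{X}\sqrt{\ca_{R}(X)}=\sqrt{\ca}_{R}(\mo(R))$. This step is purely formal.

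\textbf{Part (2).} Applying $V(-)$ to the two inclusions of part (1) and using that $V(-)$ reverses inclusions yields $V(\sqrt{\ca}_{R}(\mo(R)))\subseteq V(\ca_{R}(\mo(R)))\subseteq V(\mathfrak{ca}(R))$, which takes care of all but the first containment. For $\Sing(R)\subseteq V(\sqrt{\ca}_{R}(\mo(R)))$, fix $\p\in\Sing(R)$, so $R_{\p}$ is not regular and hence its residue field $k(\p)=(R/\p)_{\p}$ has infinite projective dimension over $R_{\p}$ by the Auslander--Buchsbaum--Serre criterion (see \ref{def regular}). By definition this says $\p\in\IPD(R/\p)$, and Proposition \ref{relation} gives $\IPD(R/\p)=V(\ca_{R}(R/\p))$, so $\p\supseteq\ca_{R}(R/\p)$. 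Since $\p$ is prime (hence a radical ideal), $\p\supseteq\sqrt{\ca_{R}(R/\p)}\supseteq\bigcap_{X\in\mo(R)}\sqrt{\ca_{R}(X)}=\sqrt{\ca}_{R}(\mo(R))$, i.e.\ $\p\in V(\sqrt{\ca}_{R}(\mo(R)))$.

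\textbf{Main obstacle.} There is no serious obstacle; the only thing to notice is that although $\sqrt{\ca}_{R}(\mo(R))$ is an intersection over \emph{all} modules — so membership of a prime in $V(\sqrt{\ca}_{R}(\mo(R)))$ looks a priori like a weak conclusion — it suffices to test against the single module $R/\p$, because $k(\p)$ already detects the non‑regularity of $R_{\p}$. Everything else is bookkeeping with $V(-)$ and the definitions of $\ca_{R}(M)$, $\ca_{R}(\mo(R))$, and $\mathfrak{ca}(R)$.
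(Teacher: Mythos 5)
Your argument is correct and follows the same route as the paper: part (1) is Lemma \ref{test regular} together with the trivial passage to radicals, and for the nontrivial containment in part (2) both you and the paper test a prime $\p\in\Sing(R)$ against the single module $R/\p$, invoking Proposition \ref{relation} to identify $V(\ca_R(R/\p))$ with $\IPD(R/\p)$ and the Auslander--Buchsbaum--Serre criterion to relate this to non-regularity of $R_\p$. The only difference is that you argue directly from $\p\in\Sing(R)$, whereas the paper argues by contraposition; this is purely cosmetic.
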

\begin{proof}
(1) This follows from Lemma \ref{test regular}.

(2) By (1), it remains to prove $\Sing(R) \subseteq V(\sqrt{\ca}_{R}(\mo(R)))$. Let $\p$ be a prime ideal of $R$ such that $\p\notin V(\sqrt{\ca}_{R}(\mo(R)))$. Since the inclusion relation $V(\sqrt{\ca_{R}(R/\p)})\subseteq V(\sqrt{\ca}_{R}(\mo(R)))$ holds, the prime ideal $\p$ does not belong to $V(\sqrt{\ca_{R}(R/\p)})=V(\ca_{R}(R/\p))$. This implies that $\p\notin\IPD(R/\p)$; see Proposition \ref{relation}. Hence, the $R_{\p}$-module $R_{\p}/\p R_{\p}$ has finite projective dimension. This implies that $\p$ does not belong to $\Sing R$, and hence $\Sing(R) \subseteq V(\sqrt{\ca}_{R}(\mo(R)))$.
\end{proof}
Regarding the generation of the singularity category and the vanishing of the cohomology annihilator, we have the following.
\begin{theorem}\label{main-second}
Let $R$ be a commutative Noetherian ring. Then $\D_{\sg}(R/\p)$ has a generator for each prime ideal $\p$ of $R$ if and only if $\sqrt{\ca}_{R/\p}(\mo(R/\p))\neq 0$ for each prime ideal $\p$ of $R$. 

\end{theorem}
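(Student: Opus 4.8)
The plan is to treat the two implications separately. The forward implication is short. Fix a prime $\p$ and put $S=R/\p$. Since $\D_{\sg}(S)$ has a generator, Proposition \ref{recover}(1) supplies $M\in\mo(S)$ that generates $\D_{\sg}(S)$ and satisfies $\Sing(S)=V(\ca_S(M))$. As $S$ is a domain, $(0)\notin\Sing(S)$, so $\ca_S(M)\neq 0$; and for any $X\in\mo(S)$ the inclusion $V(\ca_S(X))\subseteq\Sing(S)=V(\ca_S(M))$ from Lemma \ref{V(ca)=Sing(R)} gives $\sqrt{\ca_S(M)}\subseteq\sqrt{\ca_S(X)}$. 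Intersecting over all $X$ yields $\sqrt{\ca}_S(\mo(S))=\sqrt{\ca_S(M)}\supseteq\ca_S(M)\neq 0$, which is the assertion for $\p$.

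For the converse I would first reduce to a statement amenable to Noetherian induction. The hypothesis ``$\sqrt{\ca}_{R/\q}(\mo(R/\q))\neq 0$ for every prime $\q$'' is inherited by every quotient $R/I$, and a generator of $\mo(S)$ is a generator of $\D^f(S)$ (by \ref{gen regular}) whose image generates the Verdier quotient $\D_{\sg}(S)$; so it suffices to prove: \emph{if $\sqrt{\ca}_{R/\q}(\mo(R/\q))\neq 0$ for all primes $\q$ of $R$, then $\mo(R)$ has a generator}. Arguing by Noetherian induction on ideals, I may assume $\mo(R/I)$ has a generator for every nonzero ideal $I$. If $R$ is not a domain this is settled quickly: when $\nil(R)\neq 0$, every $M\in\mo(R)$ has a finite $\nil(R)$-adic filtration with $R/\nil(R)$-module subquotients, so $M$ lies in the thick subcategory generated by a generator of $\mo(R/\nil(R))$; when $\nil(R)=0$ but $R$ has minimal primes $\p_1,\dots,\p_k$ with $k\geq 2$ (all necessarily nonzero), then $\p_1\cdots\p_k\subseteq\bigcap_i\p_i=0$ and the filtration $M\supseteq\p_1M\supseteq\p_1\p_2M\supseteq\cdots\supseteq\p_1\cdots\p_kM=0$ has $R/\p_j$-module subquotients, so $M$ lies in the thick subcategory generated by $\bigoplus_j G_j$ with $G_j$ a generator of $\mo(R/\p_j)$. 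Hence we may assume $R$ is a domain.

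The heart of the matter is the domain case. Since $(0)$ is prime, $\sqrt{\ca}_R(\mo(R))\neq 0$; fix a nonzero $a$ in it and a generator $G_0$ of $\mo(R/aR)$. Every $R/a^nR$-module lies in $\thick_{\mo(R)}(G_0)$ via its $a$-adic filtration. Given $M\in\mo(R)$, choose $n$ with $a^n\in\ca_R(M)$ and then $N$ large with $\ca_R(M)=\ca_R^{N}(M)$; putting $K=\Omega^{N-1}_R M$, Lemma \ref{basic} gives $a^n\in\ann_R\underline{\End}_R(K)$, so $a^n\cdot\mathrm{id}_K$ factors as $K\xrightarrow{f}F\xrightarrow{g}K$ with $F$ finitely generated free. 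Take a free cover $0\to\Omega K\to R^s\xrightarrow{p}K\to 0$ and form the pullback $E=\{(y,x)\in R^s\oplus K: p(y)=a^n x\}$ of $p$ along $a^n\colon K\to K$. One gets $0\to\Omega K\to E\to K\to 0$ whose class in $\Ext^1_R(K,\Omega K)$ is the pullback of the cover's class along $a^n=gf$, hence vanishes (it factors through $\Ext^1_R(F,\Omega K)=0$); so $E\cong K\oplus\Omega K$. On the other hand $E$ sits in $0\to(0:_K a^n)\to E\to p^{-1}(a^n K)\to 0$, whose left term is an $R/a^nR$-module and whose right term fits in $0\to p^{-1}(a^n K)\to R^s\to K/a^nK\to 0$ with $K/a^nK$ an $R/a^nR$-module. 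Therefore $E$, and its summand $K$, lie in $\thick_{\mo(R)}(\{R,G_0\})$; since also $M\in\thick_{\mo(R)}(\{R,K\})$ (iterating the syzygy sequences $0\to\Omega^{j+1}M\to R^{n_j}\to\Omega^j M\to 0$), we get $M\in\thick_{\mo(R)}(R\oplus G_0)$. As $R\oplus G_0$ is independent of $M$, it generates $\mo(R)$, completing the induction and hence the proof.

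The step I expect to be the main obstacle is exactly this domain case: exhibiting $K$ as a direct summand of a module assembled from $R$ and $R/aR$-modules, using only that $a^n\cdot\mathrm{id}_K$ is stably zero. The pullback construction together with the vanishing of $g^\ast$ on $\Ext^1$ is the mechanism that makes it go through, and the subtle point is that the exponents $n$ and $N$ are allowed to depend on $M$ while the generator $R\oplus G_0$ must be uniform. A secondary technical point is the reduction to the domain case, which hinges on the nilpotence of the product of the minimal primes and on the inductive availability of generators of $\mo(R/\p_j)$ for the nonzero primes $\p_j$.
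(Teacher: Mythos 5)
Your proof is correct, and the backward direction takes a genuinely different route from the paper. The paper's backward implication is a one-liner: from $\sqrt{\ca}_{R/\p}(\mo(R/\p))\neq 0$ one sees that $\Reg(R/\p)$ contains a nonempty open subset, then invokes \cite[Theorem 1.1]{IT2019} to conclude $\D_{\sg}(R/\p)$ has a generator. You instead give a self-contained Noetherian induction, handling the non-reduced and non-irreducible cases via the $\nil(R)$-adic (resp.\ $\p_1\cdots\p_k$-adic) filtration, and the domain case via the pullback-and-splitting argument with $a^n\cdot\id_K$ stably zero; this produces the explicit generator $R\oplus G_0$ of $\mo(R)$ and, in effect, reproves the relevant implication of Iyengar--Takahashi's theorem rather than citing it. Your forward direction is essentially the paper's (both rest on Proposition~\ref{recover}(1) giving $\Sing(R)=V(\ca_R(M))$ for a module generator $M$ of $\D_{\sg}(R)$), but you replace the paper's localization-at-$f$ argument with the cleaner observation that $V(\ca_R(X))\subseteq\Sing(R)=V(\ca_R(M))$ forces $\sqrt{\ca_R(M)}\subseteq\sqrt{\ca_R(X)}$ for every $X$, so $\sqrt{\ca}_R(\mo(R))=\sqrt{\ca_R(M)}\neq 0$. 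The trade-off: the paper's route is far shorter because it outsources the hard part to IT2019, whereas yours is longer but independent of that external input and makes the generator visible.
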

\begin{proof}
For the forward direction, assume that $\D_{\sg}(R/\p)$ has a generator for each prime ideal $\p$ of $R$. To obtain the conclusion, it suffices to show  $\sqrt{\ca}_R(\mo(R)) \neq 0$ under the assumption that $R$ is a domain. By assumption and Proposition \ref{recover} (1), there exists $M\in\mo(R)$ such that $M$ is a generator of $\D_{\sg}(R)$ and $\Sing(R)=V(\ca_R(M))$. With the same argument in the proof of Corollary \ref{nonzero}, $\ca_R(M)\neq 0$, and hence there exists $f\neq 0$ in $\ca^n_R(M)$ for some $n\geq 0$. Combining this with Lemma \ref{basic} (1), we get that the projective dimension of $M_f$ over $R_f$ is finite. Note that $M_f$ is a generator of $\D_{\sg}(R_f)$ as $M$ is a generator of $\D_{\sg}(R)$.
Hence, $R_{f}$ is a regular ring. Let $X$ be a finitely generated $R$-module. Since the $R_{f}$-module $X_{f}$ has finite projective dimension, there exists  $m\geq 0$ such that
$$
\Ext_{R}^{m}(X, \Omega_{R}^{m} (X))_f \cong \Ext_{R_{f}}^{m}(X_{f}, (\Omega_{R}^{m} (X))_{f}) = 0.
$$ 
It follows that $f^{r}\cdot\Ext_{R}^{m}(X, \Omega_{R}^{m} (X)) = 0$ for some $r \geq 0$. This yields that $f\in\sqrt{\ca_{R}^{m}(X)} \subseteq \sqrt{\ca_{R}(X)}$. Thus, 
$$
f \in \displaystyle\bigcap_{X \in \mo(R)} \sqrt{\ca_R(X)} = \sqrt{\ca}_{R}(\mo(R)).
$$

For the backward direction, assume that $\sqrt{\ca}_{R/\p}(\mo(R/\p))\neq 0$ for each prime ideal $\p$ of $R$. Then 
$0 \in \Spec(R/\p)\setminus V(\sqrt{\ca}_{R/\p}(\mo(R/\p))) \subseteq \operatorname{Reg}(R/\p)$ in $\Spec (R/\p)$, where $\Reg(R/\p)=\Spec(R/\p)\setminus \Sing(R/\p)$; the inclusion is from Lemma \ref{syzygy}. This implies that $\Reg(R/\p)$ contains a nonempty open subset. By virtue of \cite[Theorem 1.1]{IT2019}, $\D_{\sg}(R/\p)$ has a generator for each prime ideal $\p$ of $R$.
\end{proof}
We end with this section by showing that there exists a one-dimensional Noetherian domain satisfying $\ann_R\D_{\sg}(R)=0$. 
\begin{example}\label{ann sin-cat zero}
    In \cite[Example 1]{Hochster}, Hochster constructed a one-dimensional Noetherian domain $R$  whose regular locus (i.e., $\Spec(R)\setminus \Sing(R))$ does not contain a nonempty open subset. According to \cite[Theorem 1.1]{IT2019}, there exists a prime ideal $\p$ of $R$ such that $\D_{\sg}(R/\p)$ does not have a generator. This implies that $\p$  must be the zero ideal as $R$ is a one-dimensional domain. Therefore, $\D_{\sg}(R)$ does not have a generator, and by Corollary \ref{main application}, we conclude that $\ann_R\D_{\sg}(R)=0$.
\end{example}
\section{co-cohomological annihilators for modules}\label{Section coca of modules}
In this section, we introduce the notion of a co-cohomological annihilator of modules. The main result is Theorem \ref{T2} from the introduction.
\begin{chunk}\label{def of co ca}
    For each $R$-module $M$ and $n\geq 0$, we define the $n$-th \emph{co-cohomological annihilator} of $M$ to be
    $$
    \coca_R^n(M)\colonequals \ann_R\Ext^{\geq n}_R(\mo(R),M).
    $$
In words, $\coca_R^n(M)$ consists of elements $r\in R$ such that $r\cdot \Ext^i_R(N,M)=0$ for each $i\geq n$ and $N\in\mo(R)$. By the dimension shifting, we have $\coca_R^n(M)=\ann_R\Ext^n_R(\mo(R),M)$. Consider the ascending chain of ideals
$$
\ann_RM=\coca_R^0(M)\subseteq \coca_R^1(M)\subseteq\coca_R^2(M)\subseteq\cdots,
$$
the co-cohomological annihilator of $M$ is defined to be the ideal
$$
\coca_R(M)\colonequals \bigcup_{n\geq 0}\coca_R^n(M).
$$
Since $R$ is Noetherian, $\coca_R(M)=\coca_R^n(M)$ for $n\gg 0$.  

Combining with Baer's criterion for injectivity, we conclude that, for each $R$-module $M$ and $n\geq 0$, $\coca_R^n(M)=R$ if and only if $\id_R(M)<n$.  Thus, $\coca_R(M)=R$ if and only if $\id_R(M)<\infty$.
\end{chunk}

\begin{lemma}\label{basic123}
For each $R$-module $M$ and $n >0$, 
    $$\{\p\in \Spec(R)\mid \id_{R_\p}(M_\p)\geq  n\}
     =\Supp_R\Ext^n_R(\mo(R), M).$$
\end{lemma}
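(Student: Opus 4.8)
The plan is to localise at a prime and reduce the statement to a purely homological fact about injective dimension over $R_\p$, which in turn follows from Baer's criterion together with dimension shifting.

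First I would note that localisation commutes with arbitrary direct sums, so that
\[
\Supp_R\Ext^n_R(\mo(R),M)=\bigcup_{N\in\mo(R)}\Supp_R\Ext^n_R(N,M),
\]
and that, by \ref{localization}, $\Ext^n_R(N,M)_\p\cong\Ext^n_{R_\p}(N_\p,M_\p)$ for every $N\in\mo(R)$ and every $\p\in\Spec(R)$. Hence a prime $\p$ lies in $\Supp_R\Ext^n_R(\mo(R),M)$ exactly when $\Ext^n_{R_\p}(N_\p,M_\p)\neq 0$ for some $N\in\mo(R)$, and it remains to show that, for each $\p$, this condition is equivalent to $\id_{R_\p}(M_\p)\geq n$.

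One direction is immediate from the definition of injective dimension: if $\id_{R_\p}(M_\p)<n$, then $\Ext^i_{R_\p}(-,M_\p)=0$ for all $i\geq n$, so no such $N$ can exist. For the converse I would use the following elementary fact, valid for an arbitrary module $Y$ over any Noetherian ring $A$ and any $n\geq 1$: if $\Ext^n_A(A/I,Y)=0$ for every ideal $I$ of $A$, then $\id_A(Y)\leq n-1$. Indeed, take an injective resolution $0\to Y\to E^0\to E^1\to\cdots$, let $C$ be the $(n-1)$st cosyzygy (so $C=Y$ when $n=1$), and observe that dimension shifting along the injective modules $E^0,\ldots,E^{n-2}$ yields $\Ext^1_A(A/I,C)\cong\Ext^n_A(A/I,Y)=0$ for every ideal $I$; by Baer's criterion $C$ is injective, so $0\to Y\to E^0\to\cdots\to E^{n-2}\to C\to 0$ is an injective resolution and $\id_A(Y)\leq n-1$. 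Applying the contrapositive with $A=R_\p$ and $Y=M_\p$, if $\id_{R_\p}(M_\p)\geq n$ there is an ideal $I$ of $R_\p$ with $\Ext^n_{R_\p}(R_\p/I,M_\p)\neq 0$; writing $J=I\cap R$ we have $R_\p/I\cong(R/J)_\p$ with $R/J\in\mo(R)$, so $N=R/J$ witnesses the required nonvanishing. This finishes the equivalence and hence the lemma.

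\textbf{Main obstacle.} I do not anticipate a genuine difficulty; the argument is routine. The one point worth flagging is that $M$, and therefore $M_\p$, need not be finitely generated. Consequently, unlike the projective-dimension counterpart in Lemma \ref{basic}(2)—which detects everything through the single test module $\Omega^n_R(M)$—one really must range over all of $\mo(R)$, and the injective-dimension criterion has to be phrased through Baer's criterion (which is insensitive to finite generation) rather than through the residue field.
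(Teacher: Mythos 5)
Your proof is correct and follows essentially the same route as the paper: localize at $\p$, use the isomorphism $\Ext^n_R(N,M)_\p\cong\Ext^n_{R_\p}(N_\p,M_\p)$, and appeal to Baer's criterion to show that $\id_{R_\p}(M_\p)<n$ is equivalent to $\Ext^n_{R_\p}(\mo(R_\p),M_\p)=0$. The only difference is presentational — the paper cites Baer's criterion and the density of $\mo(R)\to\mo(R_\p)$ without details, whereas you spell out the cosyzygy/dimension-shifting argument and exhibit an explicit preimage $R/J\in\mo(R)$ of the test module $R_\p/I$, both of which are correct.
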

\begin{proof}
Note that $\id_{R_\p}(M_\p)<n$ if and only if $\Ext^n_{R_\p}(\mo(R_\p),M_\p)=0$; this can be proved by using Baer's criterion for injectivity. This yields the first equality below:
    \begin{align*}
        \{\p\in \Spec(R)\mid \id_{R_\p}(M_\p)\geq n\}& =\{\p\in \Spec(R)\mid \Ext_{R_\p}^n(\mo(R_\p), M_\p)\neq 0\}\\
        &=\{\p\in \Spec(R)\mid \Ext_{R}^n(\mo(R), M)_\p\neq 0\}\\
        &=\Supp_R\Ext^n_R(\mo(R),M),
    \end{align*}
    where the second one follows from \ref{localization} and the fact that $\mo(R)\rightarrow \mo(R_\p)$ is dense. This finishes the proof.
\end{proof}

\begin{chunk}\label{def of IID}
  Let $M$ be a finitely generated $R$-module, the \emph{infinite injective dimension locus} of $M$ is defined to be
$$
\IID(M)\colonequals \{\p\in \Spec(R)\mid \id_{R_\p}(M_\p)=\infty\}.
$$ 
\end{chunk}

\begin{proposition}\label{singid}
If $\mo(R)$ has an extension generator $G$, then $\Sing(R)=\IID(G)$.
\end{proposition}

\begin{proof} 
We only need to prove $\Sing(R)\subseteq \IID(G)$, i.e., $\Spec(R)\setminus \IID(G)\subseteq \Spec(R)\setminus \Sing(R)$. By hypothesis, there exists $s\geq 0$ such that $\syz^s_R(\mo(R))\subseteq \bigcup\limits_{t\ge 0}|G|_t$. Pick $\p\in \Spec(R)\setminus \IID(G)$. That is, $\id_{R_{\p}} (G_{\p})<\infty$. Note that $(\syz^s_R (R/\p))_{\p}\in \bigcup\limits_{t\ge 0}|G_{\p}|_t$. Let $\syz^s_{R_{\p}}$ denote a minimal syzygy, then $$R_{\p}^{n}\oplus\syz^s_{R_{\p}}(k(\p))\cong (\syz^s_R(R/\p))_{\p}$$ for some $n\geq 0$, where $k(\p)$ is the residue field of $R_{\p}$. It follows that $\syz^s_{R_{\p}}(k(\p))\in \bigcup\limits_{t\geq 0}|G_{\p}|_t$.  Since $\id_{R_{\p}}(G_{\p})<\infty$, each module in $\bigcup\limits_{t\geq 0}|G_{\p}|_t$ has finite injective dimension over $R_{\p}$. In particular, $\id_{R_{\p}}(\syz^s_{R_{\p}} (k(\p)))<\infty$. Combining this with \cite[Theorem 3.7]{ggp},  we conclude that $R_{\p}$ is regular, i.e., $\p \in \Spec(R)\setminus \Sing(R)$. 
\end{proof}
\begin{lemma}\label{contain12}
    For each $R$-module $M$ and $n >0$, 
    $$\{\p\in \Spec(R)\mid \id_{R_\p}(M_\p)\geq n\}\subseteq V(\coca_R^n(M)).
    $$
    In particular, $\IID(M)\subseteq V(\coca_R(M)).$ Consequently, $$\Sing(R)\subseteq V(\bigcap_{M\in \mo(R)} \sqrt{\coca_R(M)})\subseteq V(\bigcap_{M\in \mo(R)} \coca_R(M))$$
\end{lemma} 
\begin{proof}
By Lemma \ref{basic123}, we have the first equality of the following:
    \begin{align*}
  \{\p\in \Spec(R)\mid \id_{R_\p}(M_\p)\geq  n\}
    & =\Supp_R\Ext^n_R(\mo(R), M)\\
    &=\bigcup_{N\in \mo(R)}\Supp_R\Ext^n_R(N,M)\\
    & \subseteq\bigcup_{N\in \mo(R)}V(\ann_R\Ext^n_R(N,M))\\
    & \subseteq V(\bigcap_{N\in \mo(R)}\ann_R\Ext^n_R(N,M)).
\end{align*}
Combining with $\coca_R^n(M)=\displaystyle\bigcap\limits_{N\in \mo(R)}\ann_R\Ext^n_R(N,M)$, the first statement of the lemma follows.

By the first statement, we get the inclusion below:
\begin{align*}
    \IID(M)&=\bigcap_{n>0}\{\p\in \Spec(R)\mid \id_{R_\p}(M_\p)\geq  n\}\\
    &\subseteq \bigcap_{n>0}V(\coca_R^n(M)).
\end{align*}
Combining with $\displaystyle\bigcap_{n>0}V(\coca_R^n(M))
    =V(\bigcup\limits_{n>0}\coca_R^n(M))
     =V(\coca_R(M))$, we conclude that $\IID(M)\subseteq V(\coca_R(M))$, and hence $\IID(M)\subseteq V(\sqrt{\coca_R(M)})$. This yields the second inclusion below:
     \begin{align*}
     \Sing(R)& \subseteq \bigcup_{M\in \mo(R)}\IID(M) \\
     &\subseteq \bigcup_{M\in \mo(R)} V(\sqrt{\coca_R(M)})\\
     &\subseteq V(\bigcap_{M\in \mo(R)} \sqrt{\coca_R(M)})\\
     &\subseteq V(\bigcap_{M\in \mo(R)} \coca_R(M))    
     \end{align*}
This implies the last statement.  
\end{proof}

The following gives a characterization of the singular locus of $R$ in terms of the co-cohomological annihilator; compare with \cite[Theorem 4.3]{IT2016}. Recall that a category $\mathcal X$ of $\mo(R)$ is said to have finite radius provided that $\mathcal X\subseteq [G]_t^R$ for some $t\geq 0$; see \ref{R/m^i}.

\begin{proposition}\label{singcoca}

 Let $\Omega_R^n (\mo(R))$  has finite radius for some $n\ge 0$. Then $R$ has finite Krull dimension and 
$$\Sing(R)=V(\bigcap_{M\in \mo(R)} \coca_R(M))=V(\bigcap_{M\in \mo(R)}\coca_R^{n+d+1}(M))=V(\CA^{n+d+1}(R))$$ 
for each $d\geq \dim(R)$.

\end{proposition}
 
\begin{proof}  
By \ref{R/m^i}, $\dim(R)$ is finite. The last equality holds because $\CA^{n+d+1}(R)= \bigcap\limits_{M\in \mo(R)}\coca^{n+d+1}_R(M)$. Next, we prove the other equalities.
 By hypothesis, there exists $G\in \mo(R)$ and  $t\geq 0$ such that $\Omega_R^n (\mo(R)) \subseteq [G]_t^R$. 
It follows that $\syz^{n+d}_R(\mo (R))\subseteq [H]_t^R$, where $H:=\syz^d_R(G)$. For each $M\in \mo(R)$, we have 
\begin{align*}
    (\ann_R \Ext^1_R(H, \Omega_R^1(H)))^t& 
     \subseteq (\ann_R \Ext^{\geq 1}_R(H,M))^t\\
    & \subseteq \ann_R\Ext_R^{\geq 1}([H]_t^R,M)\\
   &  \subseteq \ann_R \Ext_R^{\ge n+d+1}(\mo(R), M)\\
  &  \subseteq \coca(M), 
\end{align*}
where the first inclusion is by Lemma \ref{basic}, the second one is due to \cite[Lemma 5.3 (1)]{DT2015}, and the third one is by the inclusion $\syz^{n+d}_R(\mo (R))\subseteq [H]_t^R$. Thus, $$(\ann_R \Ext^1_R(H, \Omega_R^1(H)))^t\subseteq \bigcap_{M\in \mo(R)}\coca^{n+d+1}_R(M)\subseteq \bigcap_{M\in \mo(R)} \coca_R(M).$$
This yields the second and third inclusions below: 
\begin{align*}
    \Sing(R)& \subseteq V(\bigcap_{M\in \mo(R)} \coca_R(M))\\
    & \subseteq V(\bigcap_{M\in \mo(R)}\coca^{n+d+1}_R(M))\\
    & \subseteq V(\ann_R \Ext^1_R(H, \Omega^1_R(H)))\\
    &=\IPD(H),
\end{align*} 
where the first inclusion is by \Cref{contain12}, and the last equality can be deduced by combining the assumption $d\geq \dim(R)$ with \Cref{basic} and \Cref{relation}. The desired equalities now follow by combining with $\IPD(H)\subseteq \Sing(R)$. 
\end{proof}

\begin{chunk}\label{def of ext gen}
   (1) As mentioned in \ref{equi def of strong gen-abel}, $\mo(R)$ has a strong generator if and only if there exist $s,t\geq 0$ and $G\in \mo(R)$ such that
$$
\Omega^s_R(\mo(R))\subseteq |G|_t.
$$

Note that if $\mo(R)$ has finite extension dimension in the sense of Beligiannis \cite[Definition 1.5]{Beligiannis}, then $\mo(R)$ has a strong generator.

(2) Suggested by above, we say that $\mo(R)$ has an \emph{extension generator} if there exists $s\geq 0$ and $G\in \mo(R)$ such that
$$
\Omega^s_R(\mo(R))\subseteq\bigcup_{t\geq 0} |G|_t;
$$
in this case, we say $G$ is an extension generator of $\mo(R)$. By \cite[Remark 2.11]{BHST}, for each $G\in\mo(R)$, the union $\displaystyle\bigcup_{t\geq 0} |G|_t$ is the smallest full subcategory of $\mo(R)$ that contains $G$ and is closed under extensions and direct summands. 

By above, if $\mo(R)$ has a strong generator $G$, then $G$ is an extension generator of $\mo(R)$. 
\end{chunk}

If $\mo(R)$ has an extension generator $G$, then  $\mo(R)$ has a generator $G\oplus R$. However, the next example shows that $\mo(R)$ may not have an extension generator if $\mo(R)$ has a generator. 
\begin{example}\label{generator doesn't imply extension gen}
    Let $R$ be a regular ring with infinite Krull dimension. By \ref{gen regular}, $R$ is a generator of $\mo(R)$. However, $\mo(R)$ doesn't have an extension generator. If not, assume that there exist $s\geq 0$ and $G\in \mo(R)$ such that $\Omega^s_R(\mo(R))\subseteq \displaystyle\bigcup\limits_{t\geq 0}|G|_t$. Since $R$ is regular, there exists $n\geq 0$ such that $\Omega^n_R(G)$ is projective over $R$. This yields the second inclusion below:
    $$
\Omega^{n+s}_R(\mo(R))=\Omega^n_R\Omega^s_R(\mo(R))\subseteq \bigcup_{t\geq 0}|\Omega^n_R(G)|_t\subseteq |R|_1,
$$
where the first inclusion is by the horseshoe lemma. Thus, $\gldim(R)\leq t+s$. By \ref{strong gen regular finite dim}, $R$ has finite Krull dimension. This is a contradiction, and hence $\mo(R)$ doesn't possess an extension generator.
\end{example}

\begin{theorem}\label{equality}
    Let $R$ be a commutative Noetherian ring and $M$ be a finitely generated $R$-module. Then:
\begin{enumerate}
    \item If $\mo(R)$ has an extension generator, then $\IID(M)$ is closed in $\Spec(R)$. 

    \item If $\mo(R)$ has a strong generator, then $$\IID(M)=V(\coca_R(M)).$$
\end{enumerate} 
\end{theorem}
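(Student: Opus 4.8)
The plan is to prove both parts by using the hypothesis (an extension generator for (1), a strong generator for (2)) to replace the unbounded family $\{\Ext^i_R(N,M)\}_{N\in\mo(R)}$ by a single finitely generated module $G$. The technical device I would set up first is the elementary estimate: if $Y\in|G|_t$, then $\ann_R\Ext^j_R(Y,M)\supseteq(\ann_R\Ext^j_R(G,M))^t$ for every $j\ge 0$; consequently, since $\Ext^j_R(Y,M)$ and $\Ext^j_R(G,M)$ are finitely generated, $\Supp_R\Ext^j_R(Y,M)\subseteq\Supp_R\Ext^j_R(G,M)$. This is proved by induction on $t$: a direct summand only enlarges the annihilator, a finite direct sum leaves it unchanged, and for a short exact sequence $0\to Z_1\to Z\to Z_2\to 0$ the long exact sequence of $\Ext^j_R(-,M)$ gives $\ann_R\Ext^j_R(Z,M)\supseteq\ann_R\Ext^j_R(Z_1,M)\cdot\ann_R\Ext^j_R(Z_2,M)$.

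For part (1), I would choose $G$ and $s$ with $\Omega^s_R(\mo(R))\subseteq\bigcup_{t\ge0}|G|_t$. For $n>s$ and $N\in\mo(R)$, dimension shifting gives $\Ext^n_R(N,M)\cong\Ext^{n-s}_R(\Omega^s_RN,M)$; since $\Omega^s_RN\in|G|_t$ for some $t$, the estimate above yields $\Supp_R\Ext^n_R(N,M)\subseteq\Supp_R\Ext^{n-s}_R(G,M)$. Taking the union over $N$ and applying Lemma \ref{basic123}, one gets for every $n>s$
\[
\{\p\in\Spec(R)\mid \id_{R_\p}M_\p\ge n\}\ \subseteq\ \Supp_R\Ext^{n-s}_R(G,M)\ \subseteq\ \{\p\in\Spec(R)\mid \id_{R_\p}M_\p\ge n-s\}.
\]
Because $\{\p\mid\id_{R_\p}M_\p\ge n\}$ is nonincreasing in $n$, intersecting over all $n>s$ forces all three intersections to equal $\IID(M)$, whence
\[
\IID(M)=\bigcap_{m\ge1}\Supp_R\Ext^m_R(G,M)=\bigcap_{m\ge1}V(\ann_R\Ext^m_R(G,M))=V\Bigl(\textstyle\sum_{m\ge1}\ann_R\Ext^m_R(G,M)\Bigr),
\]
which is closed in $\Spec(R)$.

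For part (2), the inclusion $\IID(M)\subseteq V(\coca_R(M))$ is already Lemma \ref{contain12}, so I only need the reverse containment. Fix $\p\notin\IID(M)$ and put $d:=\id_{R_\p}M_\p<\infty$; it suffices to exhibit an element of $\coca_R(M)$ not in $\p$. Using that the generator is now strong, fix $s,t,G$ with $\Omega^s_R(\mo(R))\subseteq|G|_t$ (here $t$ is fixed, independent of the module), and set $i:=d+s+1$. For every $N\in\mo(R)$, dimension shifting together with the estimate gives $\ann_R\Ext^i_R(N,M)=\ann_R\Ext^{d+1}_R(\Omega^s_RN,M)\supseteq(\ann_R\Ext^{d+1}_R(G,M))^t$, hence $\coca^i_R(M)=\bigcap_{N\in\mo(R)}\ann_R\Ext^i_R(N,M)\supseteq(\ann_R\Ext^{d+1}_R(G,M))^t$. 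Since $\Ext^{d+1}_R(G,M)_\p\cong\Ext^{d+1}_{R_\p}(G_\p,M_\p)=0$ (as $d+1>\id_{R_\p}M_\p$) and $\Ext^{d+1}_R(G,M)$ is finitely generated, there is $r\in\ann_R\Ext^{d+1}_R(G,M)$ with $r\notin\p$; then $r^t\in\coca^i_R(M)\subseteq\coca_R(M)$ and $r^t\notin\p$, which is what I wanted.

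The main obstacle is not any single hard step but making the bookkeeping precise: in part (1), the delicate point is checking that the three nested families of subsets of $\Spec(R)$ genuinely have a common intersection, which is what forces the shift $n\mapsto n-s$ and the use of monotonicity of $\{\p\mid\id_{R_\p}M_\p\ge n\}$; in part (2), the crucial observation is that the strong-generator hypothesis (a fixed radius $t$) is exactly what is needed — with a radius $t=t(N)$ varying with $N$, the element $r^{t}$ would not lie in $\coca^i_R(M)$, so one should not expect the clean equality $\IID(M)=V(\coca_R(M))$ from an extension generator alone.
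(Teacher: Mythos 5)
Your proposal is correct and follows essentially the same route as the paper: both parts hinge on the same reduction, via dimension shifting and the inclusion $\Omega^s_R(N)\in|G|_t$, to the single module $G$, and both use the fact that $\Supp_R\Ext^j_R(Y,M)\subseteq\Supp_R\Ext^j_R(G,M)$ (equivalently, $\ann_R\Ext^j_R(Y,M)\supseteq(\ann_R\Ext^j_R(G,M))^t$) for $Y\in|G|_t$, arriving in part (1) at $\IID(M)=\bigcap_{m\geq 1}\Supp_R\Ext^m_R(G,M)$ and in part (2) at the containment needed beyond Lemma \ref{contain12}. The only cosmetic difference is that in (2) you argue pointwise (fixing $\p\notin\IID(M)$ and producing $r^t\in\coca_R(M)\setminus\p$) while the paper proves $V(\coca_R(M))\subseteq\{\p\mid \id_{R_\p}(M_\p)\geq n\}$ for all $n$ and then intersects.
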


\begin{proof}
(1) Assume $\mo(R)$ has an extension generator $G$. That is, there exists $s\geq 0$ satisfying: for each $X\in \mo(R)$, there exists $t\geq 0$ such that $\Omega^s_R(X)\subseteq |G|_{t}$. For each $n>0$, Lemma \ref{basic123} yields the first equality below:
\begin{align*}
    \{\p\in \Spec(R)\mid \id_{R_\p}(M_\p)\geq n+s\}& =\Supp_R\Ext^{n+s}_R(\mo(R), M)\\
    & =\Supp_R\Ext^{n}_R(\Omega^s_R(\mo(R)), M)\\
    & =\bigcup_{N\in \mo(R)}\Supp_R\Ext^n_R(\Omega_R^s(N),M).
\end{align*}
For each $N\in \mo(R)$, it follows from the assumption that $\Omega^s_R(N)\in |G|_{t}$ for some $t\geq 0$. This yields that
$
\Supp_R\Ext^n_R(\Omega^s_R(N),M)\subseteq \Supp_R \Ext_R^n(G,M).
$
Hence, $$
\{\p\in \Spec(R)\mid \id_{R_\p}(M_\p)\geq n+s\}\subseteq  \Supp_R \Ext_R^n(G,M).
$$
Taking intersections throughout all $n>0$, 
$
\IID(M)\subseteq \displaystyle\bigcap\limits_{n>0}\Supp_R\Ext^n_R(G,M). 
$
On the other hand, by Lemma \ref{basic123}, $$ \bigcap_{n>0}\Supp_R \Ext_R^n(G,M)\subseteq \bigcap_{n>0}\{\p\in \Spec(R)\mid \id_{R_\p}(M_\p)\geq n\}=\IID(M).$$ 
Thus,
$$
\IID(M)=\bigcap\limits_{n>0}\Supp_R\Ext^n_R(G,M).
$$
The desired result follows as $\Supp_R\Ext^n_R(G,M)=V(\ann_R\Ext^n_R(G,M))$ is closed. 

(2) By Lemma \ref{contain12}, it remains to prove $V(\coca_R(M))\subseteq \IID(M)$.  Let $n>0$ be an integer. Since $\mo(R)$ has a strong generator, there exist $s,t\geq 0$ and $G\in \mo(R)$ such that $\Omega^s_R(\mo(R))\subseteq |G|_t$; see \ref{def of ext gen}. Hence, 
    $$
\coca_R^{n+s}(M)=\ann_R\Ext^n_R(\Omega^s_R(\mo(R)),M)\supseteq \ann_R\Ext^n_R(|G|_t, M).
$$
It is routine to check that $\ann_R \Ext^n_R(|G|_t,M)\supseteq (\ann_R\Ext_R^n(G,M))^t $. Thus,
\begin{align*}
    V(\coca_R^{n+s}(M))&\subseteq V(\ann_R\Ext_R^n(G,M))\\
    & \subseteq \{\p\in \Spec(R)\mid \id_{R_\p}(M_\p)\geq n\},
\end{align*}
where the second inclusion is due to $V(\ann_R\Ext^n_R(G,M))=\Supp_R\Ext^n_R(G,M)$ and \ref{localization}; the equality here holds as $\Ext_R^n(G,M)$ is finitely generated. 
Combining with $\coca_R^{n+s}(M)\subseteq \coca_R(M)$, we get
$$
V(\coca_R(M))\subseteq \{\p\in\Spec(R)\mid \id_{R_\p}(M_\p)\geq n\}.
$$
Since $n>0$ is arbitrary, 
$$
V(\coca_R(M))\subseteq \bigcap_{n>0}\{\p\in \Spec(R)\mid \id_{R_\p}(M_\p)\geq n\}=\IID(M).
$$
This completes the proof.
\end{proof}
\begin{remark}
    As noted in Example \ref{generator doesn't imply extension gen}, there exist rings $R$ such that $\mo(R)$ has a generator but does not have an extension generator. Let $R$ be a commutative Noetherian ring and $M\in \mo(R)$. Motivated by Theorem \ref{equality} (1), a natural question arises: is $\IID(M)$ closed if $\mo(R)$ has a generator? 
\end{remark}
\begin{corollary}\label{defining ideal of IID}
    Let $R$ be a quasi-excellent ring with finite Krull dimension. Then 
    $$
    \IID(M)=V(\coca_R(M)).
    $$
\end{corollary}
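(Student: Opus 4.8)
The plan is to deduce this immediately from Theorem \ref{equality}(2), so the only work is to verify its hypothesis. First I would recall, as recorded in Example \ref{quasi-excellent}, the theorem of the first author, Lank, and Takahashi \cite{DLT} (Corollary 3.12 there): for any quasi-excellent ring $R$ of finite Krull dimension, the category $\mo(R)$ has a strong generator. This is precisely the assumption required in Theorem \ref{equality}(2).

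With that in hand, I would simply apply Theorem \ref{equality}(2) to the given finitely generated $R$-module $M$: since $\mo(R)$ has a strong generator, we conclude $\IID(M)=V(\coca_R(M))$, which is the assertion. In particular, this also shows $\IID(M)$ is closed in $\Spec(R)$, recovering the result of Takahashi \cite{Takahashi:2006_Glasgow} mentioned in the introduction, now with the explicit defining ideal $\coca_R(M)$.

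There is no real obstacle: the proof is a two-line combination of a cited structural result with Theorem \ref{equality}(2). The only point deserving a moment's care is that the hypotheses of \cite[Corollary 3.12]{DLT} — quasi-excellence together with finiteness of the Krull dimension — are exactly what is assumed here, so no extra regularity or local hypothesis is needed; I would state this explicitly to make the deduction self-contained.
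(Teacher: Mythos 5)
Your proposal is correct and follows exactly the paper's own argument: invoke the result recorded in Example \ref{quasi-excellent} (from \cite[Corollary 3.12]{DLT}) that $\mo(R)$ has a strong generator for a quasi-excellent ring of finite Krull dimension, and then apply Theorem \ref{equality}(2). Nothing further is needed.
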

\begin{proof}
    By Example \ref{quasi-excellent}, $\mo(R)$ has a strong generator. The desired result now follows from Theorem \ref{equality}. 
\end{proof}

\begin{remark}
    (1) Let $R$ be an excellent ring. Greco and Marinari \cite[Corollary 1.5]{Greco-Marinari} observed that $\IID(R)$ is closed. Takahashi \cite{Takahashi:2006_Glasgow} then extended this to modules, proving that $\IID(M)$ is closed for each $M\in\mo(R)$. Recently, Kimura \cite[Theorem 1.1]{Kimura} generalized Takahashi's result to acceptable rings. 
    
    (2) By Proposition \ref{Krull dim finite} and Example \ref{quasi-excellent}, for a quasi-excellent ring $R$, $\mo(R)$ has a strong generator if and only if $R$ has finite Krull dimension. 
\end{remark}
\begin{corollary}
Let $M$ be a finitely generated $R$-module. Assume $\mo(R)$ has a strong generator; equivalently, there exist $s,t\geq 0$ and $G\in \mo(R)$ such that
$
\Omega^s_R(\mo(R))\subseteq |G|_t.
$
 Set $d=\sup \{\id_{R_\p}(M_\p)\mid \p\notin \IID(M)\}$. Then:
 
 (1) $d$ is finite.
 
 (2)
 $
\IID(M)=V(\coca_R(M))=V(\coca_R^{d+s+1}(M)).
$
\end{corollary}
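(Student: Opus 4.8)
The plan is to reduce both parts to results already established: part (1) to Bass's theorem together with Proposition \ref{Krull dim finite}, and part (2) to Theorem \ref{equality} (2) together with the chain of containments extracted from its proof.

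For (1), recall that Proposition \ref{Krull dim finite} forces $\dim(R)<\infty$ once $\mo(R)$ has a strong generator. Fix a prime $\p\notin\IID(M)$; discarding the primes with $M_\p=0$ (which contribute nothing to the supremum), we may assume $M_\p\neq 0$, so that $\id_{R_\p}(M_\p)<\infty$. By Bass's theorem (see \cite[Theorem 3.1.17]{BH}), $\id_{R_\p}(M_\p)=\depth(R_\p)\leq\dim(R_\p)\leq\dim(R)$, and taking the supremum gives $d\leq\dim(R)<\infty$. Alternatively one can avoid Bass's theorem entirely: since $R$ is Noetherian the ascending chain $\{\coca^m_R(M)\}_m$ stabilizes, say $\coca^m_R(M)=\coca_R(M)$ for all $m\geq N$; then the sandwich $V(\coca^{n+s}_R(M))\subseteq\{\p\mid\id_{R_\p}(M_\p)\geq n\}\subseteq V(\coca^n_R(M))$ (the left inclusion from the proof of Theorem \ref{equality} (2), the right from Lemma \ref{contain12}) yields $\{\p\mid\id_{R_\p}(M_\p)\geq N\}=V(\coca_R(M))=\IID(M)$, whence $\id_{R_\p}(M_\p)<N$ for every $\p\notin\IID(M)$ and $d<N<\infty$.

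For (2), the equality $\IID(M)=V(\coca_R(M))$ is exactly Theorem \ref{equality} (2), so it remains only to prove $V(\coca_R(M))=V(\coca^{d+s+1}_R(M))$. Since $\coca^{d+s+1}_R(M)\subseteq\coca_R(M)$, the inclusion $V(\coca_R(M))\subseteq V(\coca^{d+s+1}_R(M))$ is automatic. For the reverse inclusion I would apply, with $n=d+1$, the containment established inside the proof of Theorem \ref{equality} (2):
$$V(\coca^{n+s}_R(M))\subseteq V(\ann_R\Ext^n_R(G,M))\subseteq\{\p\in\Spec(R)\mid\id_{R_\p}(M_\p)\geq n\}.$$
This yields $V(\coca^{d+s+1}_R(M))\subseteq\{\p\mid\id_{R_\p}(M_\p)\geq d+1\}$, and by the definition of $d$ every such prime lies in $\IID(M)$ (otherwise its injective dimension over the localization would be $\leq d$). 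Hence $V(\coca^{d+s+1}_R(M))\subseteq\IID(M)=V(\coca_R(M))$, which finishes the proof.

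This is essentially bookkeeping on top of Theorem \ref{equality}, so I do not expect a genuine obstacle; the only external ingredient is Bass's theorem in part (1), and it can be dispensed with as noted. The one point to handle with care is the degenerate situation of primes $\p$ with $M_\p=0$ (in particular the case $M=0$): these are governed by the convention $\id_{R_\p}(0)=-\infty$ and do not affect the supremum defining $d$.
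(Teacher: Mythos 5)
Your proof is correct and follows essentially the same route as the paper: part (1) via Proposition \ref{Krull dim finite} and Bass's theorem \cite[Theorem 3.1.17]{BH}, and part (2) by combining Theorem \ref{equality} (2) with the containment $V(\coca_R^{n+s}(M))\subseteq\{\p\mid \id_{R_\p}(M_\p)\geq n\}$ extracted from its proof, applied at $n=d+1$ (the paper phrases this last step as a proof by contradiction, which is immaterial). Your alternative for (1) via stabilization of the chain $\{\coca_R^m(M)\}_m$ is a valid bonus that avoids Bass's theorem, but it is not needed.
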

\begin{proof}
(1) This follows from Proposition \ref{Krull dim finite} and \cite[Theorem 3.1.17]{BH}.

  (2) By Theorem \ref{equality}, $\IID(M)=V(\coca_R(M))$.
   It is clear that $$V(\coca_R(M))\subseteq V(\coca_R^{d+s+1}(M)).$$ Assume this is not equal. Combining with $\IID(M)=V(\coca_R(M))$, there exists a prime ideal $\p$ of $R$ containing $\coca_R^{d+s+1}(M)$ and $\p\notin \IID(M)$. Thus, $\p$ contains $\ann_R\Ext^{d+s+1}_R(\mo(R),M)$. By the argument in the proof of Theorem \ref{equality} (2),
  $$ 
    V(\coca_R^{n+s}(M)) \subseteq \{\p\in \Spec(R)\mid \id_{R_\p}(M_\p)\geq n\}
$$
for each $n>0$. Thus, $\id_{R_\p}(M_\p)\geq d+1$. This contradicts with $\p\notin \IID(M)$; by assumption, $\id_{R_\p}(M_\p)\leq d$ if $\p\notin \IID(M)$. This completes the proof.
\end{proof}

\begin{chunk}\label{finite-closed}
(1) Let $V$ be a specialization closed subset of $\Spec(R)$. Namely, $V$ is a union of closed subsets of $\Spec(R)$. 
If, in addition, $V$ is a finite set, then $V$ is a closed subset of $\Spec(R)$ and $\dim(V)\le 1$.  
Indeed, the assumption yields that $V=\displaystyle\bigcup_{\p\in V}V(\p)$. Since $V$ is finite, $V$ is a finite union of closed subsets, and hence $V$ is closed. It follows that $V=V(I)$ for some ideal $I$ of $R$. If $\dim(V)(=\dim(R/I))\geq 2$, then there exists a chain of prime ideals $\p_0\subsetneq \p_1\subsetneq\p_2$ in $V(I)=V$. By \cite[Theorem 144]{kap}, $V$ is infinite. This is a contradiction. Hence, $\dim(V)\le 1$. 

(2) If $V$ is a closed subset of $\Spec(R)$, $R$ is semi-local, and $\dim(V)\leq 1$, then $V$ is a finite set. 
In fact, the assumption yields that $V=V(I)$ for some ideal $I$ of $R$. Since $R/I$ is semi-local and $\dim(R/I)=\dim(V)\leq 1$, one has $\Spec(R/I)$ is a finite set. It follows that $V(I)=V$ is finite.
\end{chunk}
\begin{example}\label{Example-finite}
    For a commutative Noetherian ring $R$, the singular locus $\Sing(R)$ is finite  in any of the following cases: 

\begin{enumerate}
    \item $R$ is a semi-local ring with $\dim (R)= 1$. 

    \item $R$ is semi-local with isolated singularities. 

    \item $R$ is a semi-local J-$0$ 
 domain with $\dim (R)=2$. 

    \item $R$ is a semi-local J-$1$ normal ring with $\dim (R)=3$. 
\end{enumerate}

For (1), since $\dim(R)= 1$, $R$ is semi-local, and the set of minimal prime ideals of  $R$  is finite,  it follows that \(\Sing(R)\) is finite. (2) holds immediately by assumption. 

For (3), since $R$ is J-$0$, we have $\Sing(R) \subseteq V(I)$ for some nonzero ideal $I$ of $R$, and as $R$ is a domain, it follows that $\dim(R/I)\leq 1$. Combining this with that $R$ is semi-local, we conclude that $V(I)=\Spec(R/I)$ is finite, and hence $\Sing(R)$ is finite. 

For (4), since $R$ is J-$1$,  we have $\Sing(R)=V(I)$ for some ideal $I$ of $R$, and as $R$ is normal, $V(I)$ cannot contain any prime ideal of height $0$ or height $1$. Thus, the height of $I$ is at least $2$. It follows that $\dim(R/I) \leq 1$,  and by the same reasoning as in (3), \( \Sing(R) \) is finite.
\end{example}
 \begin{chunk} Let $V$ be a closed subset of $\Spec(R)$. We define the \emph{arithmetic rank} of $V$ to be
 $$\ara(V):=\inf\{n\geq 0\mid V=V(x_1,\cdots,x_n)\text{ for some  } x_1,\ldots,x_n\in R\}.$$ 
 If we write $V=V(I)$ for some ideal $I$ of $R$, then the value $\ara(V(I))$ coincides with the arithmetic rank of $I$ as defined in \cite[Definition 3.3.2]{BS2012}.
\end{chunk} 
\begin{proposition}\label{finite-extension gen}
      Let $R$ be a commutative Noetherian ring with finite Krull dimension $d$.
  Assume $\Sing(R)$ is a finite set. Then $\Sing(R)$ is closed. Moreover,   
       $$\Omega^d_R(\mo(R))\subseteq \bigcup_{t\geq 0}\Big|\bigoplus_{i=0}^n \bigoplus_{\p\in \Sing(R)} \Omega^i_R(R/\p)\Big|_t,$$ 
     where $n:=\ara (\Sing(R))$. In particular, $\mo(R)$ has an extension generator.
\end{proposition}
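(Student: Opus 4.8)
The plan is to settle the closedness of $\Sing(R)$ directly, then derive the displayed containment by feeding a well-chosen ideal into \cite[Theorem~3.1(1)]{BHST} --- exactly the mechanism used in the proof of Theorem~\ref{main} --- and finally to analyze the syzygies of modules supported on $\Sing(R)$. For the first point: $\Sing(R)$ is specialization closed, since a localization of a regular local ring is again regular; being also finite, it is closed with $\dim\Sing(R)\le1$ by \ref{finite-closed}(1). Write $\Sing(R)=\{\p_1,\dots,\p_m\}$ and fix $x_1,\dots,x_n\in R$ with $V(x_1,\dots,x_n)=\Sing(R)$ and $n=\ara(\Sing(R))$. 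We may assume $\Sing(R)\neq\emptyset$ (otherwise $R$ is regular, $\gldim(R)=\dim(R)=d<\infty$, so every $\Omega^d_R(M)$ is projective and the statement is trivial) and $n\ge1$ (if $n=0$ then $\Spec(R)=\Sing(R)$ is finite, so $\dim(R)\le1$ and any $M\in\mo(R)$ already lies in $\bigcup_{t}|\bigoplus_{\p\in\Spec(R)}R/\p|_t$ via a prime filtration).

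Next I would fix $M\in\mo(R)$ and set $N:=\Omega^d_R(M)$. For $\p\notin\Sing(R)$ the ring $R_\p$ is regular of dimension at most $d$, hence $\pd_{R_\p}(M_\p)\le d$; by Lemma~\ref{basic} this forces $V(\ca^{d+1}_R(M))\subseteq\Sing(R)=V(x_1,\dots,x_n)$, so each $x_i$ lies in $\sqrt{\ca^{d+1}_R(M)}$. Choosing $l$ large, the ideal $I_M:=(x_1^{l},\dots,x_n^{l})$ satisfies $I_M\subseteq\ca^{d+1}_R(M)=\ca^1_R(N)=\ann_R\Ext^1_R(N,\Omega^1_R N)$ (Lemma~\ref{basic}(1),(4)) and $V(I_M)=\Sing(R)$. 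Since $I_M$ is generated by $n$ elements, \cite[Theorem~3.1(1)]{BHST} yields $L\in\mo(R/I_M)$ with $N\in|\bigoplus_{j=1}^{n}\Omega^j_R(L)|^{R}_{n+1}$, and $\Supp_R(L)\subseteq V(I_M)=\Sing(R)$.

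It then remains to show that each $\Omega^j_R(L)$, for $1\le j\le n$, lies in $\bigcup_t|G|_t$, where $G=\bigoplus_{i=0}^{n}\bigoplus_{\p\in\Sing(R)}\Omega^i_R(R/\p)$; fix a projective resolution of each $R/\p_k$ once and for all. The crucial observation is that $R\in|G|_2$: a free cover $0\to\Omega^1_R(R/\p_1)\to R^{b}\to R/\p_1\to0$ exhibits $R^b$, hence its summand $R$, in $|\Omega^1_R(R/\p_1)\oplus R/\p_1|_2\subseteq|G|_2$; consequently $\bigcup_t|G|_t$ contains every projective module and is closed under adjoining and removing projective summands. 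Now $L$ has a prime filtration $0=L_0\subsetneq\cdots\subsetneq L_q=L$ whose subquotients are copies of $R/\p_1,\dots,R/\p_m$, because the primes occurring lie in $\Supp_R(L)\subseteq\Sing(R)$. Resolving $L$ step by step along this filtration by the horseshoe lemma, each $\Omega^j_R(L)$ acquires a filtration with successive quotients among $\Omega^j_R(R/\p_1),\dots,\Omega^j_R(R/\p_m)$, so $\Omega^j_R(L)\in|\bigoplus_{k}\Omega^j_R(R/\p_k)|_q\subseteq|G|_q$, and by the previous sentence this is unaffected by the ambiguity of $\Omega^j_R(L)$ up to projective summands. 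Therefore $\bigoplus_{j=1}^{n}\Omega^j_R(L)\in\bigcup_t|G|_t$, hence $N=\Omega^d_R(M)\in\bigcup_t|G|_t$, and since $M$ was arbitrary we obtain $\Omega^d_R(\mo(R))\subseteq\bigcup_t|G|_t$; in particular $G$ is an extension generator of $\mo(R)$.

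The one step that requires genuine care is the last one: since a syzygy is only well defined up to projective summands, one must guarantee that no stray free summands enter when passing from $L$ to the $\Omega^j_R(L)$. This is exactly what the identity $R\in|G|_2$ handles, together with the fact that the prime filtration of $L$ is a genuine tower of short exact sequences (not a filtration achieved only after splitting off summands), so the horseshoe lemma returns an honest filtration of $\Omega^j_R(L)$ by the preselected modules $\Omega^j_R(R/\p_k)$. The precise value of the radius $t$ produced is irrelevant, since the definition of an extension generator (with $s=d$ and $G$ fixed) only requires that some $t$ works.
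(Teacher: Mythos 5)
Your proof is correct, but it takes a genuinely different route from the paper. The paper observes that for $\p\notin\Sing(R)$ the ring $R_\p$ is regular of dimension $\le d$, so $\Ext^1_R(\Omega^d_R(M),\Omega^{d+1}_R(M))_\p=0$, hence the non-free locus of $\Omega^d_R(M)$ is contained in $\Sing(R)$; it then invokes \cite[Theorem~3.1(2)]{BHST} as a black box, which yields the displayed containment in one stroke. You instead invoke \cite[Theorem~3.1(1)]{BHST} --- the same ingredient already used in the proof of Theorem~\ref{main}(3)$\Rightarrow$(4) --- to reach a module $L\in\mo(R/I_M)$ with $\Omega^d_R(M)\in|\bigoplus_{j=1}^{n}\Omega^j_R(L)|^R_{n+1}$, and then reduce to the proposed generator $G$ by hand: $\Supp_R(L)\subseteq\Sing(R)$ gives a prime filtration of $L$ with subquotients $R/\p_k$, the horseshoe lemma transports this to a filtration of each $\Omega^j_R(L)$ by the $\Omega^j_R(R/\p_k)$, and the observation that $R\in|G|_2$ (hence every finitely generated projective lies in $\bigcup_t|G|_t$) neutralizes the projective-summand ambiguity of syzygies. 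Your handling of the degenerate cases $\Sing(R)=\emptyset$ and $n=0$, and your verification that the $\Sing(R)$ is closed via \ref{finite-closed}(1), are also correct. In effect you re-derive the content of BHST Theorem~3.1(2) in this situation from Theorem~3.1(1) plus a prime-filtration argument, which makes the proof more self-contained and unifies it with the strategy of Theorem~\ref{main}; the paper's route is shorter by outsourcing the filtration step to the cited reference.
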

\begin{proof}
    By \ref{finite-closed}, $\Sing(R)$ is closed. Assume $\Sing(R)=V(x_1,\ldots,x_n)$ for some elements $x_1,\ldots,x_n$ in $R$. Let $M$ be a finitely generated $R$-module. For each $\p\notin \Sing(R)$, we have
    $$
    \Ext^1_R(\Omega^d_R(M),\Omega^{d+1}_R(M))_\p=0.
    $$
    It follows that the non-free locus of $\Omega^d_R(M)$, given by $\{\p\in \Spec(R)\mid \Omega^d_R(M)_\p \text{ is not free}\}$,  is contained in $\Sing(R)$. By \cite[Theorem 3.1 (2)]{BHST}, $\Omega^d_R(M)\in \displaystyle\bigcup_{t\geq 0}\Big|\bigoplus_{i=0}^n \bigoplus_{\p\in \Sing(R)} \Omega^i_R(R/\p)\Big|_t$.
\end{proof}
\begin{remark}\label{AIT}
 Proposition \ref{finite-extension gen} extends a recent result of Araya, Iima, and Takahashi \cite[Proposition 4.7 (1)(2)]{AIT}.  Specifically, if $R$ is either a semi-local ring with isolated singularities or a semi-local J-$0$ domain with $\dim(R) = 2$, then $\Sing(R)$ is finite by Example \ref{Example-finite} (2) and (3), and hence $\mo(R)$ has an extension generator by Proposition \ref{finite-extension gen}. 
\end{remark}

The following result concerns the relationship between the generator of the singularity category and the extension generator of the module category; compare Example \ref{generator doesn't imply extension gen}.
\begin{proposition}\label{gen-extension gen}
    Let $R$ be a commutative Noetherian ring with finite Krull dimension $d$. If $\D_{\sg}(R/\mathfrak{p})$ has a generator for each prime ideal $\mathfrak{p} \in \Spec(R)$, then $\Omega^d_R(\mo(R)) \subseteq \displaystyle\bigcup_{t\geq 0} |G|_t$ for some $G \in \mo(R)$. In particular, $\mo(R)$ has an extension generator.
\end{proposition}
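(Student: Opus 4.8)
The plan is to deduce this from the structural results of Section~\ref{Section singularity cat} together with the resolving theorem of Bahlekeh--Hakimian--Salarian--Takahashi \cite[Theorem 3.1]{BHST}, running an induction on the Krull dimension $d$. First I would unpack the hypothesis: by \cite[Theorem 1.1]{IT2019} it is equivalent to ``$\mo(R/\mathfrak p)$ has a generator for every prime $\mathfrak p$'', and in particular it forces $\D_{\sg}(R)$ to have a generator. Hence, by Proposition~\ref{recover}~(1), there is a module $M_0\in\mo(R)$ generating $\D_{\sg}(R)$ with $\Sing(R)=V(\ca_R(M_0))$; in particular $\Sing(R)$ is \emph{closed} and $\Reg(R)$ is open in $\Spec(R)$. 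Moreover the hypothesis descends to every quotient $R/I$, since $(R/I)/(\mathfrak p/I)\cong R/\mathfrak p$.

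The role of $d$ is as follows. For any $M\in\mo(R)$, whenever $\pd_{R_{\mathfrak p}}(M_{\mathfrak p})$ is finite it is $\le d$ by the Auslander--Buchsbaum formula, so by Proposition~\ref{relation} and Lemma~\ref{basic} the non-free locus of $\Omega^d_R(M)$ equals $V(\ca^{d+1}_R(M))=\IPD(M)$ and is contained in $\Sing(R)$. Thus it suffices to produce a single $G$ whose extension closure $\bigcup_{t\ge0}|G|_t$ contains all $d$-th syzygies. A first reduction disposes of nilpotents: along the finite filtration $R\supseteq\nil(R)\supseteq\nil(R)^2\supseteq\cdots=0$ every module in $\mo(R)$ is built from $R_{\mathrm{red}}$-modules by a bounded number of extensions, and a change-of-rings comparison of $R$- and $R_{\mathrm{red}}$-syzygies reduces matters to the case where $R$ is reduced.

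For the induction on $d$: if $d=0$ then $R$ is Artinian and $\mo(R)$ has a strong, hence extension, generator. If $d\ge1$ and $R$ is reduced, then the minimal primes of $R$ are regular points, so the closed set $\Sing(R)=V(I_1)$ contains no minimal prime and $\dim(R/I_1)<d$; since $R/I_1$ again satisfies the hypothesis, the inductive hypothesis provides an extension generator of $\mo(R/I_1)$. Given $M\in\mo(R)$, the non-free locus of $N:=\Omega^d_R(M)$ lies in $V(I_1)$, so \cite[Theorem 3.1]{BHST} expresses $N$, up to direct summands and a bounded number of extensions, through $R$-syzygies of modules $R/\mathfrak p$ with $\mathfrak p\supseteq I_1$, all of which lie in $\mo(R/I_1)$. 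Feeding these into the extension generator of $\mo(R/I_1)$ supplied by induction, and absorbing the $R$-versus-$R/I_1$ change-of-rings terms (a Shamash/Eisenbud-type comparison of finite extent) together with the finitely many low syzygies, yields a single $G\in\mo(R)$ that works for every $M$; this gives $\Omega^d_R(\mo(R))\subseteq\bigcup_{t\ge0}|G|_t$, and the final clause is then just the definition of an extension generator.

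The step I expect to be the main obstacle is securing the \emph{uniformity} of $G$. A direct Noetherian induction on closed subsets of $\Sing(R)$ does not work, because $\Sing(R)$ may be infinite and the closed sets that occur as non-free loci of the various $\Omega^d_R(M)$ are not all contained in one proper closed subset, so there is no ``maximal'' case to reduce to. This is precisely why the argument is routed through quotients that strictly lower the Krull dimension (which forces the separate reduction to the reduced case), and the genuinely technical part is then the change-of-rings bookkeeping needed to transport an extension generator of $\mo(R/I_1)$ — and, in the first step, of $\mo(R_{\mathrm{red}})$ — back to $R$ while keeping all the balls $|\,\cdot\,|_t$ of bounded radius.
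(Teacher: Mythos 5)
Your overall skeleton — induct on $d$, cut down to a quotient of smaller dimension supported on $\Sing(R)$, transfer an extension generator back up — is the paper's strategy too (the paper reduces to the domain case, picks a single $0\neq x$ in the defining ideal of $\Sing(R)$ so that $R_x$ is regular, applies induction to $R/(x)$, and invokes \cite[Lemma 4.5(2)]{AIT} for the transfer; the non-domain case is handled as in \cite[Proposition 4.6(3)]{AIT}). However, the two places where you deviate are exactly where your argument has genuine gaps, and you have flagged but not closed them. First, the reduction to the reduced case: filtering $M$ by $\nil(R)^iM$ and applying the horseshoe lemma expresses $\Omega^d_R(M)$ through modules of the form $\Omega^d_R(N)$ with $N\in\mo(R_{\mathrm{red}})$, but these are $R$-syzygies, not $R_{\mathrm{red}}$-syzygies, and they are not controlled by an extension generator of $\mo(R_{\mathrm{red}})$ (already for $R=k[x]/(x^2)$ one has $\Omega^d_R(k)=k$ while $\Omega^d_{R_{\mathrm{red}}}(k)=0$). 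Bridging this requires a genuine change-of-rings lemma, which is precisely what \cite[Lemma 4.5(2)]{AIT} supplies in the single-element, domain setting and what you do not have for $\nil(R)$.

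Second, and more seriously, the inductive step for reduced $R$: applying \cite[Theorem 3.1]{BHST} to $N=\Omega^d_R(M)$ with the full ideal $I_1$ defining $\Sing(R)$ yields $N\in\bigl|\bigoplus_{j}\Omega^j_R(L)\bigr|_{t}$ for some $L\in\mo(R/I_1)$ \emph{depending on $M$}, involving the low $R$-syzygies $\Omega^j_R(L)$, $0\le j\le n$, and in particular $L$ itself. The induction hypothesis only controls the $\dim(R/I_1)$-th syzygies \emph{over $R/I_1$} of $R/I_1$-modules; it says nothing about an arbitrary $L\in\mo(R/I_1)$ or its low $R$-syzygies, and no fixed module has all of $\mo(R/I_1)$ in its extension closure unless $R/I_1$ is very special. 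In the paper's Theorem \ref{main} this step is rescued only because there $R/I$ is Artinian, so every $L$ is built from $(R/I)/J(R/I)$ by at most $\ell\ell(R/I)$ extensions; in your setting $R/I_1$ is merely of smaller Krull dimension and no such bound exists. To repair the step one must show that the $L$ produced is itself (up to controlled corrections) a high syzygy over the quotient ring, which forces the Tor-vanishing/torsion-freeness considerations that work cleanly only for a quotient by a single nonzerodivisor — i.e., it pushes you back to the paper's route through domains and \cite[Lemma 4.5(2)]{AIT}. As written, the "absorbing the change-of-rings terms" sentence is not a proof but the statement of the missing lemma.
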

 \begin{proof}
      We prove it by induction on $d = \dim R$. Note that when $d = 0$, $R/\mathfrak{p}$ is  Artinian for every $\mathfrak{p}$, and hence $\mo(R/\mathfrak{p})$ has an extension generator. So we assume $d > 0$.

First, we consider the case when $R$ is a domain. By \cite[Theorem 1.1]{IT2019}, $\Sing(R)=V(I)$ for some ideal $I$. As $R$ is a domain, $I$ is a non-zero ideal, so pick $0 \neq x \in I$. Then $R_x$ is regular. As every prime ideal of $R/(x)$ is of the form $\mathfrak{p}/(x)$ for some $\mathfrak{p} \in \Spec(R)$, and $(R/(x))/(\mathfrak{p}/(x)) \cong R/\mathfrak{p}$, thus $\D_{\sg} ((R/(x))/{\mathfrak{q}})$ has a classical generator for each $\mathfrak{q} \in \Spec(R/(x))$. Since $R$ is a domain, $n \colonequals \dim (R/(x)) < \dim(R) = d$. Thus, by induction hypothesis, $\Omega^n_{R/(x)}(\mo(R/(x)) \subseteq \displaystyle\bigcup_{t\geq 0} |G|_t^{R/(x)}$ for some $G \in \mo(R/(x))$. By \cite[Lemma 4.5(2)]{AIT}, there exists $H \in \operatorname{mod}(R)$ such that $\Omega^d_R(\mo(R)) \subseteq \displaystyle\bigcup_{t\geq 0} |H|_t^R$. This completes the case where $R$ is a domain.

The proof of the general case now proceeds similarly to part (b) of the proof in \cite[Proposition 4.6(3)]{AIT}; note that each $R/\mathfrak{p}_i$ in loc. cit. is a domain and satisfies the same hypothesis on the singularity category.
\end{proof}
The following result is related with Proposition \ref{Krull dim finite}; $\Rfd$ represents the large restricted flat dimension, see \ref{Rfd}. Recall that a full subcategory of $\mo(R)$ is called \emph{resolving} if it contains all finitely generated projective modules and is closed under syzygies, extensions, and direct summands.  For each $M\in \mo(R)$, let $\res(M)$ denote the smallest resolving subcategory of $\mo(R)$ that contains $M$. Note that $|M|_t\subseteq \res(M)$ for each $t\geq 0$.
\begin{proposition}\label{ext-dimension}
    Let R be a commutative Noetherian ring. If there exist an integer $s\geq 0$ and a nonzero finitely generated $R$-module $G$ such that $\Omega_R^s(\mo(R)) \subseteq \res(G)$, then R
has finite Krull dimension and $$\dim(R) \leq  s+\Rfd(G)+1.$$
In particular, if
$\mo(R)$ has an extension generator, then R has finite Krull dimension.
\end{proposition}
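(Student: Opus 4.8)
The plan is to show that $\depth(R_\p)\le s+\Rfd_R(G)$ for every $\p\in\Spec(R)$, and then to conclude via the inequality $\dim(R)\le 1+\sup_{\p\in\Spec(R)}\depth(R_\p)$ of \cite[Lemma 1.4]{CFF} --- the same input already used in the proof of Proposition \ref{local-global-n(mo(R))}(2). Since $G\ne 0$, \ref{Rfd} gives $\Rfd_R(G)\in\N$, so this yields $\dim(R)\le s+\Rfd_R(G)+1<\infty$. For the last sentence of the statement, if $G$ is an extension generator of $\mo(R)$ then $\Omega^s_R(\mo(R))\subseteq\bigcup_{t\ge 0}|G|_t\subseteq\res(G)$, so the hypothesis of the proposition is automatically satisfied.

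The first ingredient I would establish is that a resolving closure has uniformly bounded restricted flat dimension: every $Y\in\res(G)$ satisfies $\Rfd_R(Y)\le\max\{0,\Rfd_R(G)\}$. For this I would check that $\mathcal X:=\{Y\in\mo(R)\mid\Rfd_R(Y)\le\max\{0,\Rfd_R(G)\}\}$ is a resolving subcategory containing $G$: it contains $G$ and all finitely generated projective modules (whose restricted flat dimension is $\le 0$); it is closed under direct summands since $\depth$ at each prime only grows on passing to a summand; and it is closed under extensions and syzygies by the depth lemma, using $\depth B_\q\ge\min\{\depth A_\q,\depth C_\q\}$ for $0\to A\to B\to C\to 0$ and $\depth(\Omega_R X)_\q\ge\min\{\depth R_\q,\depth X_\q+1\}$ (the latter giving $\Rfd_R(\Omega_R X)\le\Rfd_R(X)$). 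Hence $\res(G)\subseteq\mathcal X$.

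Next I would localize. Fixing $\p$, exactness of localization together with density of $\mo(R)\to\mo(R_\p)$ and the fact that $\Omega^s_{R_\p}(M_\p)$ agrees with $\Omega^s_R(M)_\p$ up to free summands give $\Omega^s_{R_\p}(\mo(R_\p))\subseteq\res_{R_\p}(G_\p)$; also $\Rfd_{R_\p}(G_\p)\le\Rfd_R(G)$ because the defining supremum ranges over fewer primes. Set $c:=\depth(R_\p)$; we may assume $c>s$, otherwise there is nothing to prove. Choosing $J\subseteq R_\p$ generated by a maximal $R_\p$-regular sequence, we have $\pd_{R_\p}(R_\p/J)=c$, so $\Omega^s_{R_\p}(R_\p/J)$ is a nonzero module of finite projective dimension $c-s$ lying in $\res_{R_\p}(G_\p)$. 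The first ingredient, applied over $R_\p$, gives $\Rfd_{R_\p}(\Omega^s_{R_\p}(R_\p/J))\le\max\{0,\Rfd_{R_\p}(G_\p)\}\le\Rfd_R(G)$, while the Auslander--Buchsbaum formula gives $\depth_{R_\p}\Omega^s_{R_\p}(R_\p/J)=c-(c-s)=s$, whence $\Rfd_{R_\p}(\Omega^s_{R_\p}(R_\p/J))\ge\depth(R_\p)-s=c-s$. Combining the two inequalities yields $c-s\le\Rfd_R(G)$, i.e. $\depth(R_\p)\le s+\Rfd_R(G)$, as wanted.

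The only genuinely delicate point is conceptual rather than computational: the natural argument controls $\depth(R_\p)$, whereas the conclusion concerns Krull dimension, and the passage from one to the other is precisely \cite[Lemma 1.4]{CFF}. The remaining steps --- the uniform $\Rfd$-bound on a resolving subcategory and the compatibility of $\res(-)$ with localization --- are routine, relying only on the depth lemma, exactness of localization, and the Auslander--Buchsbaum formula.
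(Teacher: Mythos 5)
Your proof is correct, and it rests on the same two key inputs as the paper's argument: the fact that modules of bounded restricted flat dimension form a resolving subcategory (you verify this directly via the depth lemma; the paper cites \cite[Theorem~2.4]{CFF}), and the inequality $\dim(R)\le 1+\sup_{\p\in\Spec(R)}\depth(R_\p)$ from \cite[Lemma~1.4]{CFF}. Where you differ is in how the depth bound is extracted. The paper argues globally: from $\Rfd_R(\Omega_R^s M)\le n$ for all $M$ it deduces $\Rfd_R(M)\le s+n$ for all $M$, then takes $M=R/\p$ and uses $\depth(R_\p)\le\Rfd_R(R/\p)$. You instead localize at $\p$, take $J$ generated by a maximal $R_\p$-regular sequence, and use the Auslander--Buchsbaum formula to compute $\depth_{R_\p}\Omega^s_{R_\p}(R_\p/J)=s$, whence $\depth(R_\p)-s\le\Rfd_{R_\p}(\Omega^s_{R_\p}(R_\p/J))\le\Rfd_R(G)$. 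Your explicit local test module $R_\p/J$ plays the role the paper's $R/\p$ plays globally, with Auslander--Buchsbaum replacing the paper's step of shifting $\Rfd$ through $s$ syzygies; both variants are sound and of roughly equal weight. One point you treat rather quickly but that deserves a sentence: the containment $\Omega^s_{R_\p}(\mo(R_\p))\subseteq\res_{R_\p}(G_\p)$ holds because the preimage of $\res_{R_\p}(G_\p)$ under the exact localization functor is a resolving subcategory of $\mo(R)$ containing $G$, hence contains $\res_R(G)\supseteq\Omega^s_R(\mo(R))$; combined with density of $\mo(R)\to\mo(R_\p)$ and the fact that $(\Omega^s_R M)_\p$ and $\Omega^s_{R_\p}(M_\p)$ differ only by projective summands, this gives the claim.
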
 
\begin{proof}
     By \ref{Rfd}, $\Rfd(G)$ is finite. Set $\Rfd(G)=n$. By \cite[Definition 2.1 and Theorem 2.4]{CFF}, we conclude that the category $\mathcal{C}\colonequals\{M \in \mo (R) | \Rfd(M) \leq n\}$ is a resolving subcategory of $\mo(R)$
containing $G$, hence it contains $\res(G)$. It follows from the hypothesis that $\Omega_R^s(M)\in \mathcal{C}$ for each $M \in \mo(R)$. Thus, for each $M \in
\mo(R)$, $\Rfd(\Omega_R^s (M)) \leq  n$. Again, by \cite[Definition 2.1 and Theorem 2.4]{CFF},
we conclude that $\Rfd(M) \leq s+n$ for each $M \in \mo(R)$. Since $\depth
(R_\p)\leq \Rfd(R/\p)$, we get $\sup\{\depth(R_\p) \mid \p \in \Spec(R)\} \leq s+n$. Since prime ideals
of localization are localization of prime ideals,  \cite[Lemma 1.4]{CFF} implies
that $\dim(R) -1 \leq s+n$.
\end{proof}

In the following, we say $\mo(R)$ has a \emph{resolving generator} provided that $\Omega^s_R(\mo(R))\subseteq \res(G)$ for some $s\geq 0$ and $G\in\mo(R)$.
\begin{corollary}\label{equi-resolving}
    Let $R$ be a commutative Noetherian ring. Then the following are equivalent.

\begin{enumerate}
    \item  $R$ has finite Krull dimension and $\D_{\sg}(R/\p)$ has a generator for each prime ideal $\p$ of $R$. 

    \item $\mo(R/\p)$ has an extension generator for each prime ideal $\p$ of $R$. 

    \item $\mo(R/\p)$ has a resolving generator for each prime ideal $\p$ of $R$. 
\end{enumerate}
  Moreover, if any of the above equivalent conditions holds, then
$\mo(R/I)$ has an extension generator for each ideal $I$ of $R$.  
\end{corollary}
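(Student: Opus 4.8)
The strategy is to establish the cycle $(1)\Rightarrow(2)\Rightarrow(3)\Rightarrow(1)$, with Propositions \ref{gen-extension gen} and \ref{ext-dimension} doing the real work; the final assertion will drop out along the way. I would begin with $(1)\Rightarrow(2)$, in fact proving the formally stronger statement that $(1)$ implies $\mo(R/I)$ has an extension generator for \emph{every} ideal $I$ of $R$. Given such an $I$, note that $\dim(R/I)\le\dim(R)<\infty$ and that every prime of $R/I$ has the form $\p/I$ for a prime $\p\supseteq I$ of $R$, with $(R/I)/(\p/I)\cong R/\p$ whose singularity category has a generator by hypothesis; so Proposition \ref{gen-extension gen}, applied to the ring $R/I$, shows $\mo(R/I)$ has an extension generator. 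Taking $I$ to be a prime ideal yields $(1)\Rightarrow(2)$, and (once the cycle is closed) the last assertion as well. The implication $(2)\Rightarrow(3)$ is immediate: if $G$ is an extension generator of $\mo(R/\p)$, then $\Omega^s_{R/\p}(\mo(R/\p))\subseteq\bigcup_{t\ge0}|G|_t\subseteq\res_{R/\p}(G)$, since $|G|_t\subseteq\res_{R/\p}(G)$ for all $t$, so $G$ is a resolving generator.

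For $(3)\Rightarrow(1)$ there are two things to verify, the first being $\dim(R)<\infty$. The Noetherian ring $R$ has only finitely many minimal primes $\q_1,\dots,\q_k$, and for each $i$ the category $\mo(R/\q_i)$ has a resolving generator, which we may take nonzero by replacing $0$ with $R/\q_i$ if necessary; hence Proposition \ref{ext-dimension}, applied to $R/\q_i$, gives $\dim(R/\q_i)<\infty$. Since every chain of primes of $R$ extends downward to start at a minimal prime, $\dim(R)=\max_i\dim(R/\q_i)<\infty$.

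The second point is that $\D_{\sg}(R/\q)$ has a generator for each prime $\q$ of $R$. Fix $\q$ and a resolving generator $G$ of $\mo(R/\q)$, so $\Omega^s_{R/\q}(\mo(R/\q))\subseteq\res_{R/\q}(G)$. The Verdier quotient functor $q\colon\mo(R/\q)\to\D_{\sg}(R/\q)$ sends finitely generated projectives to $0$, identifies $q(\Omega_{R/\q}(M))$ with $q(M)[-1]$, and carries short exact sequences to exact triangles and direct summands to direct summands; consequently the full subcategory of modules whose image lies in $\thick_{\D_{\sg}(R/\q)}(G)$ is a resolving subcategory containing $G$, hence contains $\res_{R/\q}(G)$. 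Since $q(\Omega^s_{R/\q}(M))\cong q(M)[-s]$ for every $M\in\mo(R/\q)$ and $\thick$ is closed under shifts, every finitely generated $R/\q$-module lies in $\thick_{\D_{\sg}(R/\q)}(G)$; and since every object of $\D_{\sg}(R/\q)$ is isomorphic to a shift of a finitely generated module (cf. the proof of Corollary \ref{annihilator of sin}), we get $\thick_{\D_{\sg}(R/\q)}(G)=\D_{\sg}(R/\q)$. Thus $G$ is a generator of $\D_{\sg}(R/\q)$, and together with $\dim(R)<\infty$ this gives $(1)$. With the cycle closed, the final assertion is precisely the strengthening of $(1)\Rightarrow(2)$ proved in the first step.

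Essentially all of the substance is already packaged into Propositions \ref{gen-extension gen} and \ref{ext-dimension}, so I do not expect a deep obstacle here. The only genuinely new ingredients are the elementary reduction of $\dim(R)$ to the finitely many minimal primes and the triangulated bookkeeping showing that a resolving generator of $\mo(R/\q)$ descends to a generator of $\D_{\sg}(R/\q)$; the main care needed is in making precise the identifications ``syzygy $\leftrightarrow$ desuspension'' and ``short exact sequence $\leftrightarrow$ triangle'' and in confirming that $\res_{R/\q}(G)$ is genuinely absorbed by $\thick_{\D_{\sg}(R/\q)}(G)$ under the quotient functor.
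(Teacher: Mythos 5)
Your proof is correct and follows essentially the same route as the paper: $(1)\Rightarrow(2)$ and the final assertion via Proposition \ref{gen-extension gen} applied to $R/I$, $(2)\Rightarrow(3)$ from $|G|_t\subseteq\res(G)$, and $(3)\Rightarrow(1)$ by Proposition \ref{ext-dimension} applied over the finitely many minimal primes together with the observation that $\res_{R/\p}(G)$ lands in $\thick_{\D_{\sg}(R/\p)}(G)$. The only difference is that you spell out the triangulated bookkeeping (syzygy $\leftrightarrow$ desuspension, short exact sequence $\leftrightarrow$ triangle, the nonzero-$G$ normalization for Proposition \ref{ext-dimension}) that the paper leaves implicit.
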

\begin{proof}
    The second statement and the implication $(1)\Rightarrow (2)$ follows from Proposition \ref{gen-extension gen}. The implication $(2)\Rightarrow (3)$ follows from the definition. 

$(3)\Rightarrow (1)$: Assume $\mo(R/\p)$ has a resolving generator for each $\p\in\Spec(R)$. By Proposition \ref{ext-dimension}, $R/\p$ has finite Krull dimension for each prime ideal $\p$ of $R$. Since the set of minimal prime ideals of $R$ is finite, we conclude that $R$ has finite Krull dimension. 

Fix a prime ideal $\p$ of $R$, assume $G$ is a resolving generator of $\mo(R/\p)$. Note that the modules in $\res(G)$ are contained in $\thick_{\D_{\sg}(R/\p)}(G)$. Since each complex in $\D_{\sg}(R/\p)$ is isomorphic to some shift of a module $M$ in $\mo(R/\p)$,  we conclude that $G$ is a generator of $\D_{\sg}(R/\p)$. This finishes the proof.
\end{proof}

We end this section by comparing the cohomological annihilators and the co-cohomological annihilators; see Proposition \ref{compare ca and caco}. 
Recall that $R$ is said to be \emph{Gorenstein in codimension n} if
 $R_\p$ is Gorenstein for each prime ideal $\p$ of $R$ with height at most $n$.
\begin{lemma}\label{example of co ca}
(1) Let $(R,\m)$ be a $d$-dimensional Cohen--Macaulay local ring with a canonical module $\omega$. Set $(-)^\dagger=\Hom_R(-,\omega)$. For each $M\in \CM(R)$, $\ca_R^1(M^{\dagger})\subseteq \coca^{d+1}_R(M)$, and equality holds if, in addition, $R$ is Gorenstein in codimension $d-1$. 

(2) Let $R$ be a Gorenstein ring with finite Krull dimension $d$.  For each $M\in\CM(R)$, $$\coca_R^{d+1}(M)=\coca_R(M)=\ann_R\underline{\End}_R(M)=\ca_R(M).$$

(3) Let R be a commutative Noetherian ring. Then $R$ is Gorenstein with finite Krull dimension if and only if  
$\ca_R(M)=\coca_R(M)$ for each $M\in \mo(R)$ with $\Rfd_R(M)=0.$
\end{lemma}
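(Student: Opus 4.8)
The whole argument will run on the canonical duality $(-)^\dagger=\Hom_R(-,\omega)$. Since $\omega$ is a canonical, hence dualizing, module, this functor restricts to an exact duality on $\CM(R)$: for $M\in\CM(R)$ one has $M^\dagger\in\CM(R)$, $M^{\dagger\dagger}\cong M$, and applying $\RHom_R(-,\omega)$ twice yields natural ($R$-linear) isomorphisms $\Ext^i_R(A,B)\cong\Ext^i_R(B^\dagger,A^\dagger)$ for all $A,B\in\CM(R)$ and all $i\ge 0$; in particular these preserve annihilators. I will also use repeatedly that $\Omega^j_R(N)\in\CM(R)$ for every $N\in\mo(R)$ as soon as $j\ge d$ (because $\dim R=d$), together with dimension shifting and Lemma \ref{basic}.

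\emph{Part (1).} For the inclusion, fix $N\in\mo(R)$. Dimension shifting gives an $R$-linear isomorphism $\Ext^{d+1}_R(N,M)\cong\Ext^1_R(\Omega^d_RN,M)$; as $\Omega^d_RN$ and $M$ both lie in $\CM(R)$, the duality turns this into $\Ext^{d+1}_R(N,M)\cong\Ext^1_R\bigl(M^\dagger,(\Omega^d_RN)^\dagger\bigr)$. Since $(\Omega^d_RN)^\dagger\in\mo(R)$ and $\ca^1_R(M^\dagger)=\ann_R\Ext^1_R(M^\dagger,\mo(R))$ by Lemma \ref{basic}(3), we get $\ca^1_R(M^\dagger)\subseteq\ann_R\Ext^{d+1}_R(N,M)$; intersecting over all $N$ yields $\ca^1_R(M^\dagger)\subseteq\coca^{d+1}_R(M)$. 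For the equality, assume moreover that $R$ is Gorenstein in codimension $d-1$. By Lemma \ref{basic}(1), $\ca^1_R(M^\dagger)=\ann_R\Ext^1_R(M^\dagger,\Omega^1_RM^\dagger)$, and by the duality $\Ext^1_R(M^\dagger,\Omega^1_RM^\dagger)\cong\Ext^1_R(L,M)$ where $L:=(\Omega^1_RM^\dagger)^\dagger\in\CM(R)$. Now $L$, being maximal Cohen--Macaulay over a $d$-dimensional Cohen--Macaulay ring that is Gorenstein in codimension $d-1$, satisfies Serre's condition $(S_d)$, hence by the syzygy theorem of Evans and Griffith it is a $d$-th syzygy: $L\cong\Omega^d_R(N)$ up to free summands for some $N\in\mo(R)$. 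Consequently $\coca^{d+1}_R(M)\subseteq\ann_R\Ext^1_R(\Omega^d_R(N),M)=\ann_R\Ext^1_R(L,M)=\ca^1_R(M^\dagger)$ (free summands contribute nothing, since $\Ext^1_R(R,M)=0$), and with the inclusion already proved this gives equality. The main obstacle is exactly this appeal to the Evans--Griffith syzygy theorem (and the bookkeeping that the free-summand ambiguity is harmless); everything else is formal manipulation with the duality, dimension shifting, and Lemma \ref{basic}.

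\emph{Part (2).} Here $R$ is strongly Gorenstein, so $\omega=R$, $(-)^\dagger=(-)^{*}=\Hom_R(-,R)$, $R$ is trivially Gorenstein in codimension $d-1$, and $\Omega^j_R(M)\in\CM(R)$ for every $M\in\CM(R)$ and $j\ge 0$. Part (1) gives $\coca^{d+1}_R(M)=\ca^1_R(M^{*})$. Since $(-)^{*}$ carries free modules to free modules, it descends to a duality on the stable category of maximal Cohen--Macaulay modules, so $\underline{\End}_R(M^{*})\cong\underline{\End}_R(M)^{\mathrm{op}}$, whence $\ca^1_R(M^{*})=\ann_R\underline{\End}_R(M^{*})=\ann_R\underline{\End}_R(M)$ by Lemma \ref{basic}(1). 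It remains to check $\coca_R(M)=\coca^{d+1}_R(M)$. Let $C_k$ be the $k$-th cosyzygy of $M$, i.e. the cokernel after $k$ steps of the coresolution $0\to M\to R^{r_0}\to R^{r_1}\to\cdots$ obtained by applying $(-)^{*}$ to a free resolution of $M^{*}$; this complex is exact because $\Ext^{>0}_R(M^{*},R)=0$, and $C_k\in\CM(R)$ with $C_k^{*}\cong\Omega^k_R(M^{*})$. Using the short exact sequences defining the $C_k$ and $\id_RR=d$, the long exact sequence gives $\coca^{d+1}_R(C_k)=\coca^{d+1+k}_R(M)$, while applying the first equality of part (2) to $C_k$ and Lemma \ref{basic}(4) gives $\coca^{d+1}_R(C_k)=\ca^1_R(\Omega^k_R M^{*})=\ca^{k+1}_R(M^{*})$. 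Letting $k\to\infty$ yields $\coca_R(M)=\ca_R(M^{*})$, which equals $\ca^1_R(M^{*})=\coca^{d+1}_R(M)$ by Proposition \ref{relation CA(R) and ca(R-mo)}(1); this closes the plan.
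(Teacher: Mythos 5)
Part (1) of your proof is the same argument as the paper's: both establish the inclusion by pushing $\Ext^{d+1}_R(N,M)$ through canonical duality of maximal Cohen--Macaulay modules, and both obtain the reverse inclusion (under the Gorenstein-in-codimension-$(d-1)$ hypothesis) from the same citation that $(\Omega^1_R(M^\dagger))^\dagger$ is a $d$-th syzygy. Your paraphrase of that citation as ``the Evans--Griffith syzygy theorem applied because $L$ satisfies $(S_d)$'' is loose --- being $(S_d)$ alone is not sufficient to be a $d$-th syzygy; the Gorenstein-in-codimension-$(d-1)$ hypothesis is doing real work in the cited result, not just providing $(S_d)$ --- but since you are invoking the same theorem the paper does, this is a presentation issue rather than a mathematical gap.

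Part (2) is a genuinely different route. The paper proves it directly: for the inclusion $\coca_R(M)\subseteq\ann_R\underline{\End}_R(M)$ it uses that $M$ is an infinite syzygy to produce an exact sequence $0\to M\to P\to C\to 0$ with $C\in\Omega^{n-1}_R(\mo(R))$, and for $\ann_R\underline{\End}_R(M)\subseteq\coca^{d+1}_R(M)$ it uses $\id_R(Q)\leq d$ for projectives $Q$. Your approach instead bootstraps off Part (1): you get $\coca^{d+1}_R(M)=\ca^1_R(M^*)=\ann_R\underline{\End}_R(M)$ from duality on the stable category, and then establish $\coca_R(M)=\coca^{d+1}_R(M)$ by a clean cosyzygy dimension shift combined with Proposition \ref{relation CA(R) and ca(R-mo)}(1). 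This is a nice argument, and the cosyzygy shift $\coca^{d+1}_R(C_k)=\coca^{d+1+k}_R(M)$ is correctly justified by $\id_R(R)=d$.

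There is, however, a real gap you should close: you repeatedly write ``Part (1) gives\ldots'', but Part (1) as stated requires $(R,\m)$ to be a Cohen--Macaulay \emph{local} ring with canonical module, whereas in Part (2) the ring $R$ is a Gorenstein ring of finite Krull dimension that need not be local (indeed, the paper's own proof of (2) is carefully written so as never to invoke (1)). The conclusion you want does hold in the non-local strongly Gorenstein setting, but for a reason you must spell out: with $\omega=R$ the canonical duality $(-)^*=\Hom_R(-,R)$ still restricts to an exact duality on $\CM(R)$, $\Omega^d_R(X)\in\CM(R)$ for every $X\in\mo(R)$ because $R$ is Cohen--Macaulay of dimension $d$, and crucially the Evans--Griffith-type input of Part (1) can be bypassed entirely because over a strongly Gorenstein ring \emph{every} maximal Cohen--Macaulay module is an infinite syzygy (see \ref{def of MCM}), hence automatically a $d$-th syzygy. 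Once you make this substitution explicit, the rest of your cosyzygy argument for $\coca_R(M)=\coca^{d+1}_R(M)$ goes through.
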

\begin{proof}
(1) For each $X\in \mo(R)$, since $\Omega^d_R(X)\in \CM(R)$, we get $$\ca^1_R(M^{\dagger})\subseteq \ann_R \Ext^1_R(M^{\dagger}, (\Omega_R^d (X))^{\dagger})=\ann_R \Ext^1_R(\Omega_R^d(X), M)=\ann_R \Ext^{d+1}_R(X,M).$$ 
Since this is true for any $X\in \mo(R)$, we get $\ca_R^1(M)\subseteq \ann_R \Ext^{d+1}_R(\mo(R), M)=\coca_R^{d+1}(M)$.

Assume $R$ is Gorenstein in codimension $d-1$. Note that $(\Omega_R^1(M^{\dagger}))^{\dagger}\in \CM(R)$. By \cite[Theorem 3.8]{EG}, $(\Omega_R^1(M^{\dagger}))^{\dagger}\cong \Omega^d_R(Y)$ for some $Y\in \mo(R)$. This yields the first equality below:
$$\coca_R^{d+1}(M)\subseteq \ann_R \Ext^{d+1}_R(Y, M)=\ann_R \Ext^1_R((\Omega_R^1(M^{\dagger}))^{\dagger},M)=\ann_R \Ext^1_R(M^{\dagger}, \Omega_R^1(M^{\dagger}))=\ca_R^1(M^\dagger),$$
where the last equality is by Lemma \ref{basic}. This proves the converse direction.

 (2) For each $n>0$, assume $r\in \coca_R^n(M)$. This yields 
$$
0=r\cdot \Ext^n_R(\mo(R), M)=r\cdot \Ext^1_R(\Omega_R^{n-1}(\mo(R)),M).
$$
Since $M\in \CM(R)$ and $R$ is strongly Gorenstein (see \ref{def of Gorenstein}), $M$ is an infinite syzygy. In particular, there is a short exact sequence
$
\xi\colon 0\rightarrow M\rightarrow P\rightarrow C\rightarrow 0,
$
  where $P$ is a finitely generated projective and $C\in \Omega_R^{n-1}(\mo(R))$.  By above, $r\cdot \xi=0$. In particular, $r\colon M\rightarrow M$ factors through $P$. This implies that $r\in \ann_R\underline{\End}_R(M)$, and therefore $\coca_R^n(M)\subseteq\ann_R\underline{\End}_R(M)$ for each $n>0$. It follows from this that $\coca_R(M)\subseteq \ann_R\underline{\End}_R(M)$. 

  By above, $\coca_R^{d+1}(M)\subseteq \coca_R(M)\subseteq \ann_R\underline{\End}_R(M)=\ca_R(M)$, where the equality here follows from Lemma \ref{basic} and Proposition \ref{relation CA(R) and ca(R-mo)}. Next, we prove the inclusion $\ann_R\underline{\End}_R(M)\subseteq \coca_R^{d+1}(M)$.  Assume $a\in \ann_R\underline{\End}_R(M)$. Namely, there is a factorization
$$
\xymatrix{
M\ar[rr]^-a\ar[rd]&& M\\
& Q\ar[ru]& 
},
$$
where $Q$ is finitely generated projective. Note that $\id_R(Q)\leq d$; see \ref{def of Gorenstein}. Hence, $\Ext^{d+1}_R(X,Q)=0$ for each $X\in \mo(R)$. Applying $\Ext^{d+1}_R(X,-)$ to the above diagram, we conclude that $a\colon \Ext^{d+1}_R(X,M)\rightarrow \Ext^{d+1}_R(X,M)$ is zero for each $X\in \mo(R)$. That is, $a\in \coca_R^{d+1}(M)$, and hence $\ann_R\underline{\End}_R(M)\subseteq \coca_R^{d+1}(M)$. Finally, $\ann_R\underline{\End}_R(M)=\ca_R(M)$ follows from \Cref{relation CA(R) and ca(R-mo)} (1).

(3) For the forward direction, assume that $R$ is Gorenstein with finite Krull dimension. For each $M\in\mo(R)$,  if $\Rfd(M)=0$, then $M$ is maximal Cohen--Macaulay. By (2), this yields that $\ca_R(M)=\coca_R(M)$. 

For the backward direction, assume that $\ca_R(M)=\coca_R(M)$ for each $M\in\mo(R)$ with $\Rfd_R(M)=0$. In particular, since $\Rfd_R(R)=0$, we have $\ca_R(R)=\coca_R(R)$. It follows that $\coca_R(R)=R$ as $\ca_R(R)=R$. Hence, there exists $n\geq 0$ such that $\coca^n_R(R)=R$. In particular, $\Ext^j_R(R/\p, R)=0$ for all $j \geq n$ and $\p\in \Spec(R)$. Localizing at $\p$, we conclude that $\id_{R_\p}(R_\p)\leq n$ for each $\p\in\Spec(R)$, and hence $\id_R(R)\leq n$. This shows that $R$ is Gorenstein with finite Krull dimension; see \ref{def of Gorenstein}.
\end{proof}

\begin{proposition}\label{compare ca and caco}
    Let $R$ be a commutative Noetherian ring and $M$ be a finitely generated $R$-module. Assume $M$ is an infinite syzygy, then:

\begin{enumerate}
    \item $\coca_R(M)\subseteq \ca_R^1(M)=\ann_R\underline{\End}_R(M)\subseteq \ca_R(M)$. All these inclusions are equal if, in addition, $R$ is Gorenstein with finite Krull dimension. 

    \item If $\mo(R)$ has a strong generator, then 
    $$
    \IPD(M)=V(\ca_R(M))\subseteq V(\coca_R(M))=\IID(M). 
    $$
\end{enumerate}
\end{proposition}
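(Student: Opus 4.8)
The plan is to deduce everything from results already in the paper, the one point needing a short separate argument being that an infinite syzygy over a Gorenstein ring of finite Krull dimension is maximal Cohen--Macaulay. For part (1): the equality $\ca_R^1(M)=\ann_R\underline{\End}_R(M)$ is the case $n=1$ of Lemma \ref{basic}(1), and $\ca_R^1(M)\subseteq\ca_R(M)$ is immediate from $\ca_R(M)=\bigcup_{n\geq 0}\ca_R^n(M)$. For the inclusion $\coca_R(M)\subseteq\ca_R^1(M)$ I would repeat the argument from the proof of Lemma \ref{example of co ca}(2), which uses the hypotheses there (``$M\in\CM(R)$, $R$ Gorenstein'') only to know that $M$ is an infinite syzygy---which is our standing assumption: given $r\in\coca_R^n(M)$ with $n\geq 1$, successive cokernels of the coresolution $0\to M\to Q^0\to Q^1\to\cdots$ produce a short exact sequence $0\to M\to Q^0\to C\to 0$ with $Q^0$ finitely generated projective and $C\in\Omega_R^{n-1}(\mo(R))$; since $\coca_R^n(M)=\ann_R\Ext^1_R(\Omega_R^{n-1}(\mo(R)),M)$ by dimension shifting, $r$ kills the class of this extension, so $r\colon M\to M$ factors through $Q^0$ and $r\in\ann_R\underline{\End}_R(M)$. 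Together with $\coca_R^0(M)=\ann_R(M)\subseteq\ca_R^1(M)$, taking the union over $n$ gives $\coca_R(M)\subseteq\ca_R^1(M)$.

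Now assume in addition that $R$ is Gorenstein with $\dim R=d<\infty$. I would first check $M\in\CM(R)$: writing $Z_0=M$ and $Z_{i+1}=\Coker(Z_i\to Q^i)$ for the successive cokernels above, one has $Z_i\cong\Omega_R^1(Z_{i+1})$ up to projective summands, hence $\Ext^i_R(M,R)\cong\Ext^{i+n}_R(Z_n,R)$ for all $i\geq 1$ and $n\geq 0$; since $\id_R R=d$ (see \ref{def of Gorenstein}), this vanishes as soon as $n+i>d$, so $\Ext^{>0}_R(M,R)=0$ and $M$ is maximal Cohen--Macaulay by \ref{def of MCM}. Then Proposition \ref{relation CA(R) and ca(R-mo)}(1) gives $\ca_R^1(M)=\ca_R(M)$, and Lemma \ref{example of co ca}(2) gives $\coca_R(M)=\ann_R\underline{\End}_R(M)=\ca_R^1(M)$; hence all the inclusions of part (1) are equalities.

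For part (2), assume $\mo(R)$ has a strong generator. Then $\IPD(M)=V(\ca_R(M))$ by Proposition \ref{relation}, $\IID(M)=V(\coca_R(M))$ by Theorem \ref{equality}(2), and $V(\ca_R(M))\subseteq V(\coca_R(M))$ because $\coca_R(M)\subseteq\ca_R(M)$ by part (1); this is the displayed chain. If moreover $R$ is Gorenstein, then $\dim R<\infty$ by Proposition \ref{Krull dim finite}, so part (1) applies and gives $\coca_R(M)=\ca_R(M)$; hence $V(\coca_R(M))=V(\ca_R(M))$ and the middle inclusion is an equality.

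The only real care needed is the syzygy bookkeeping: verifying that the argument imported from the proof of Lemma \ref{example of co ca}(2) needs nothing beyond ``$M$ is an infinite syzygy'', and making the dimension-shifting isomorphisms $\Ext^i_R(M,R)\cong\Ext^{i+n}_R(Z_n,R)$ together with the ``up to projective summands'' identifications of the $Z_i$ precise. I expect no substantive difficulty beyond this.
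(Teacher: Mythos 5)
Your proof is correct and follows essentially the same route as the paper's, the only cosmetic difference being that in part (2) you deduce $V(\coca_R(M))=V(\ca_R(M))$ from the ideal equality supplied by part (1), whereas the paper first establishes $\IPD(M)=\IID(M)$ directly via the local observation that $R_\p$ is strongly Gorenstein and then separately derives $\coca_R(M)=\ca_R(M)$ from part (1). You also make explicit a step the paper leaves implicit: that an infinite syzygy over a strongly Gorenstein ring is maximal Cohen--Macaulay (your dimension-shifting argument with $\id_R R=d$ is fine), which is needed before Lemma \ref{example of co ca}(2) and Proposition \ref{relation CA(R) and ca(R-mo)}(1) can be invoked.
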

\begin{proof}
    (1) By Lemma \ref{basic}, it remains to prove $\coca_R(M)\subseteq \ann_R\underline{\End}_R(M)$. This follows from the same argument as in the proof of Lemma \ref{example of co ca} (2). If, in addition, $R$ is strongly Gorenstein, then $M$ is in $\CM(R)$. Hence, by Lemma \ref{example of co ca} (2),  we have $\coca_R(M)=\ca_R(M)$.

  (2) The statement follows from Proposition \ref{relation}, Theorem \ref{equality}, and (1). 
\end{proof}

\begin{proposition}\label{cocacm}
Let $R$ be a Cohen--Macaulay ring with finite Krull dimension $d$. Assume $R$ has a canonical module. Then $$\bigcap_{M\in \mo(R)}\coca_R^i(M)=\bigcap_{N\in \CM (R)}\coca_R^i(N)$$ for all $i\geq \dim(R)+1$ and $\bigcap\limits_{M\in \mo(R)}\coca_R(M)= \bigcap\limits_{N\in \CM (R)}\coca_R(N)$.
    
\end{proposition}

\begin{proof} 
It is clear that $$\bigcap\limits_{M\in \mo(R)}\coca_R^i(M)\subseteq \bigcap\limits_{N\in \CM (R)}\coca_R^i(N) \text{ and } \bigcap\limits_{M\in \mo(R)}\coca_R(M)\subseteq \bigcap\limits_{N\in \CM (R)}\coca_R(N).$$
It remains to prove the converse inclusions.

Let $\omega$ be a dualizing module of $R$.
Since $R\in \CM(R)$ and $\syz^d_R(\mo(R))\subseteq \CM(R)$, each module in $\mo(R)$ has finite $\CM(R)$-resolution dimension in the sense of \cite[Section 1]{buchw}. Moreover, the subcategory $\add(\omega$) is a cogenerator for $\CM(R)$ in the sense of \cite[Section 1]{buchw}. By taking $\mathbf X:=\CM(R)$ in  \cite[Theorem 1.1]{buchw}, for each $M\in \mo (R)$, there exists $X_M\in \CM(R)$ and $Y_M\in \mo(R)$ which fits in an exact sequence $$0\to Y_M\to X_M\to M\to 0,$$ where $Y_M$ has finite $\add(\omega)$-resolution. It follows that $\id_R(Y_M)<\infty$, and hence $\id_R (Y_M)\leq d$; see \cite[Theorem 3.1.17]{BH}.  This yields that $\Ext_R^{\ge d+1}(\mo(R), Y_M)=0$, and hence we get that $\ann_R \Ext^i_R(L, X_M)=\ann_R\Ext^i_R(L,M)$ for $i\ge d+1$ and $L\in \mo(R)$. Thus, for each $M\in \mo(R)$, there exists $X_M\in \CM(R)$ such that $\coca_R^i(X_M)=\coca_R^i(M)$ for all $i\ge d+1$. In particular, $\coca_R(X_M)=\coca_R(M)$.  This yields that $\bigcap\limits_{M\in \mo(R)}\coca_R^i(M)\supseteq \bigcap\limits_{N\in \CM (R)}\coca_R^i(N)$ for $i\ge d+1$ and $\bigcap\limits_{M\in \mo(R)}\coca_R(M)\supseteq \bigcap\limits_{N\in \CM (R)}\coca_R(N)$. 
\end{proof}

\begin{corollary} 
Let $R$ be a Gorenstein ring with finite Krull dimension $d$. For each $i\ge d+1$,  
$$\bigcap_{M\in \mo(R)} \coca^i_R(M)=\bigcap_{M\in \mo(R)} \coca_R(M)=\ann_R \D_{\sg}(R)$$ 
If, in addition, $\D_{\sg}(R)$ has a strong generator, then all the above ideals define $\Sing(R)$. 
\end{corollary}

\begin{proof}
The second statement is a consequence of the first and \Cref{recover} (2). 
For the first one, since $R$ itself is a canonical module, by \Cref{cocacm} it is enough to prove $$\bigcap_{M\in \CM (R)} \coca^i_R(M)=\bigcap_{M\in \CM (R)} \coca_R(M)=\ann_R \D_{\sg}(R)$$  
By \Cref{example of co ca} (2), we have $\coca_R^i(M)=\coca_R(M)=\ann_{\D_{\sg}(R)}(M)$ for each $M\in \CM(R)$ and $i\geq d+1$.  The desired equalities follow by combining this with $\ann_R\D_{\sg}(R)=\bigcap\limits_{M\in \CM(R)} \ann_{\D_{\sg}(R)}(M)$.
\end{proof}

\bibliographystyle{amsplain}
\bibliography{ref}
\end{document}